\documentclass{amsart}
%
%%%%%%%%%%%%%--PREAMBLE--%%%%%%%%%%%%%%%%%%
%\documentclass[a4paper,10pt]{article}
\usepackage{mathrsfs,amssymb,mathrsfs, multirow,setspace}
%\doublespacing
\onehalfspacing
\usepackage[a4paper, total={7in, 9in}]{geometry}
\usepackage{enumerate}
\theoremstyle{plain}
%\graphicspath{ {images/} }
\usepackage{graphicx}
\newtheorem{theorem}{Theorem}[section]
\newtheorem{lemma}[theorem]{Lemma}
\newtheorem{proposition}[theorem]{Proposition}
\newtheorem{corollary}[theorem]{Corollary}

\theoremstyle{definition}

\theoremstyle{remark}
\newtheorem{remark}[theorem]{Remark}

\input xy
\xyoption{all}

\def\G{\Gamma}

%%%%%%%%%%%%%%%--BODY--%%%%%%%%%%%%%%%%%%
\begin{document}
	%\title[Equality of various graphs on finite semigroup]{Equality of various graphs on finite semigroup}
	%\maketitle
	%\section*{Introduction}
	%******************
	\title[ On the inclusion ideal graph of semigroups]{On the inclusion ideal graph of semigroups}
	\author[Barkha Baloda, Jitender Kumar]{Barkha Baloda, $\text{Jitender Kumar}^{^*}$}
	\address{Department of Mathematics, Birla Institute of Technology and Science Pilani, Pilani, India}
	\email{barkha0026@gmail.com,jitenderarora09@gmail.com}

	%\date{...}
	\begin{abstract}
		The inclusion ideal graph $\mathcal{I}n(S)$ of a semigroup $S$ is an undirected simple graph whose vertices are all nontrivial left ideals of $S$ and two distinct left ideals $I, J$ are adjacent if and only if either $I \subset J$ or $J \subset I$. The purpose of this paper is to study algebraic properties of the semigroup $S$ as well as graph theoretic properties of $\mathcal{I}n(S)$. In this paper, we investigate the connectedness of $\mathcal{I}n(S)$. We show that diameter of $\mathcal{I}n(S)$ is at most $3$ if it is connected. %and classify the semigroups, in terms of left ideals, for which diameter of $\mathcal{I}n(S)$ is $3$. 
		We also obtain a  necessary and sufficient condition of $S$ such that the clique number of $\mathcal{I}n(S)$ is $n$, where $n$ is the number of minimal left ideals of $S$. Further, various graph invariants of $\mathcal{I}n(S)$ viz. perfectness, planarity, girth etc. are discussed. For a completely simple semigroup $S$, we investigate various properties of $\mathcal{I}n(S)$ including its independence number and matching number. Finally, we obtain the automorphism group of $\mathcal{I}n(S)$. 
		%Moreover, some graph theoretic properties of inclusion ideal graph are studied and we show that the automorphism group  $Aut(\mathcal{I}n(S)) \cong S_n \times \mathbb{Z}_2$, where $S_n$ is a symmetric group of degree $n$ and $\mathbb{Z}_2$ is the additive group of integers modulo $2$, when $S$ is a completely simple semigroup. Finally, we obtain the automorphism group of  %Also, we obtain the girth of $\mathcal{I}n(S)$.  
		%Further, for a completely simple semigroup $S$, the dominance number, clique number, independence number, vertex connectivity, matching number of   $\mathcal{I}n(S)$ are studied. Finally, we determine the automorphism group $Aut(\mathcal{I}{n}(S))$ of $\mathcal{I}n(S)$. We show that $Aut(\mathcal{I}n(S)) \cong S_n \times \mathbb{Z}_2$, where $S_n$ is a symmetric group of degree $n$ and $\mathbb{Z}_2$ is the additive group of integers modulo $2$. 
		\end{abstract}

	\subjclass[2010]{05C25}
	
	\keywords{Semigroup, ideals, completely simple semigroup,  graph automorphism\\ *  Corresponding author}
	
	\maketitle
	\section{Introduction and Motivation}
Literature is abound with numerous remarkable results concerning a number of constructions of  graphs from rings, semigroups or groups \cite{abdollahi2006non, i.Bosak, cameron2011power,  demeyer2002zerodivisor, transitivekelarev, combinatorialkelrav, a.Cayley-abundant, redmond2003ideal}. The investigation of graphs related to various algebraic structures is very important because graphs of this type have valuable applications and are related to automata theory(see \cite{b.kelarev2003graph, a.kelarev2009cayley, kelarevminimalautomata}). Akbari \emph{et al.} \cite{akbari2014inclusion} have introduced the notion of inclusion ideal graph associated with ring structure. 
The \emph{inclusion ideal graph} $\mathcal{I}n(R)$ of a ring $R$ is an undirected simple graph whose vertex set is the collection of nontrivial left ideals of  $R$ and two distinct nontrivial left ideals $I, J$ are adjacent if and only if either $I \subset J$ or $J \subset I$. Further, in \cite{ inclusionidealringakbari} Akbari \emph{et al.} have studied various graph invariants including  connectedness, perfectness, diameter, girth  of $\mathcal{I}n(R)$. It was shown that $\mathcal{I}n(R)$ is disconnected if and only if $R \cong M_{2}(D)$ or $D_1 \times D_2$, for some division rings, $D, D_1$ and $D_2$. The subspace inclusion graph $\mathcal{I}n(\mathbb{V})$ associated with vector space $\mathbb{V}$ has been studied by A. Das \cite{das2016}. The \emph{subspace inclusion graph} on a finite dimensional vector space $\mathbb{V}$ is undirected simple graph whose vertices are all nontrivial proper subspaces of $\mathbb{V}$ and two distinct nontrivial proper subspaces $W_1$ and $W_2$ are adjacent if and only if $W_1 \subset W_2$ or $W_2 \subset W_1$. In \cite{das2016}, A. Das has studied the diameter, girth, clique number and chromatic number of $\mathcal{I}n(\mathbb{V})$. Moreover, other graph invariants namely perfectness, planarity of $\mathcal{I}n(\mathbb{V})$ have been studied in \cite{subspaceinclusiondas}. Also, for a $3$-dimensional vector space it was shown that $\mathcal{I}n(\mathbb{V})$ is  bipartite, vertex transitive, edge transitive and has a perfect matching. D. Wong \emph{et al.} \cite{D.Wang2018} have proved the following conjectures proposed by A. Das \cite{subspaceinclusiondas}: If $\mathbb{V}$ is a 3-dimensional vector space over a finite field $F_q$ with $q$ elements, then
\begin{enumerate}
	\item[(1)] the domination number of $\mathcal{I}n(\mathbb{V})$ is $2q$.
	\item[(2)] $\mathcal{I}n(\mathbb{V})$ is distance regular.
\end{enumerate}
The problem to determine the independence number of vector space $\mathbb{V}$ when the base field is finite is solved by X. Ma \emph{et al.} \cite{independencenowang}. Further, the automorphism group of $\mathcal{I}n(\mathbb{V})$ was obtained by X. Wang \emph{et al.} in \cite{automorphismwang}.
%%%%%%%%%%%%%%%%%%%%%%%%%%%%%%%%%%%%%%%%%%%%%%%%%%%%%%%%%%%%%%%%%%%%%%%%%%%%%%%%%%%%%%%%%%%%%%%%%%%%%%%%
Analogously, the subgroup inclusion graph of a group $G$, denoted by $\mathcal{I}n(G)$, has been studied by Devi and Rajkumar \cite{devi2016inclusion}. They classified the finite groups whose inclusion graph is complete, bipartite, tree, star, path, cycle, disconnected and claw-free. S. Ou \emph{et al.} \cite{ou2019diameters} determined the diameter of $\mathcal{I}n(G)$ when $G$ is nilpotent group and characterized the independent dominating sets as well as  automorphism group of $\mathcal{I}n(\mathbb{Z}_n)$. Further, S. Ou \emph{et al.} \cite{ou2019planarity} studied planarity and the fixing number of inclusion graph of a nilpotent group. 

It is pertinent as well as interesting to associate graphs to ideals of a semigroup as ideals gives a lot of information about structure of semigroups \cite{saito1958semigroups, satyanarayana1978structure}. Motivated with the work of \cite{akbari2014inclusion,das2016}, in this paper, we consider the inclusion ideal graph associated with semigroups. The \emph{inclusion ideal graph} $\mathcal{I}n(S)$ of a semigroup $S$ is an undirected simple graph whose vertex set is nontrivial left ideals of  $S$ and two distinct nontrivial left ideals $I, J$ are adjacent if and only if $I \subset J$ or $J \subset I$. The paper is arranged as follows. In Section 2, we state necessary fundamental notions  and recall some necessary results. Section 3 comprises the results concerning the inclusion ideal graph of an arbitrary semigroup. In Section 4, we study various graph invariants of  $\mathcal{I}n(S)$, where $S$ is a completely simple semigroup. Further, in Section 5, for a completely simple semigroup $S$,  the automorphism group of $\mathcal{I}n(S)$ is obtained. 
	%%%%%%%%%%%%%%%%%%%%%%%%%%%%%%%%%%%%%%%%%%%%%%%%%%%%%%%%%%%%%%%%%%%%%%%%%%%%%%%%%%%%%%%%%%%%%%%%%%%%%%%%%%%%%%%%%%%%%%%%%%%%%%%%%%%%%%%%%%%%%%%%%%%%%%%%%%%%%%%%%%%%%%%%%%%%%%%%%
	\section{Preliminaries}
	In this section, first we recall necessary definitions and results of semigroup theory from \cite{b.clifford61vol1}. A \emph{semigroup} $S$ is a non-empty set together with an associative binary operation on $S$. The Green's $\mathcal{L}$-relation on a semigroup $S$ defined as $x$         $\mathcal{L}$ $y \Longleftrightarrow S^{1}x = S^{1}y$ where $S^{1}x = Sx \cup    \{x\}$. %most fundamentals tools in understanding a semigroup are its Green's relations: $\mathcal{L}$, $\mathcal{R}$, $\mathcal{D}$, $\mathcal{J}$ and $\mathcal{H}$ such that $x \mathcal{L} y \Longleftrightarrow S^{1}x = S^{1}y$, $x \mathcal{R} y \Leftrightarrow xS^{1} = yS^{1}$ and $x \mathcal{J} y \Leftrightarrow S^{1}xS^{1} = S^{1}yS^{1}$. 
	 The $\mathcal{L}$-class of an element $a \in S$ is denoted as $L_a$. 	A non-empty subset $I$ of $S$ is said to be a \emph{left [right] ideal} if $SI \subseteq I  [IS \subseteq I]$ and an  \emph{ideal} of $S$ if $SIS \subseteq I$. Union of two left [right] ideals of $S$ is again a left [right] ideal of $S$. A left ideal $I$ is \emph{maximal} if it does not contained in  any nontrivial left ideal of $S$. If $S$ has unique maximal left ideal then it contains every nontrivial left ideal of $S$.
	A left ideal $I$ is \emph{minimal} if it does not properly contain any left ideal of $S$. It is well known that every non-zero element of a minimal left ideal of $S$ is in same $\mathcal{L}$-class. If $S$ has a minimal left ideal then every nontrivial left ideal contains at least one minimal left ideal. If $A$ is any other left ideal of $S$ other than minimal left ideal  $I$, then either $I \subset A$ or $I \cap A = \emptyset$. Thus we have the following remark.
	\begin{remark}\label{disjoint intersection minimal}
		Any two different minimal left ideals of a semigroup $S$ are disjoint.
	\end{remark} 
The following lemma is useful in the sequel and we shall use this without referring to it explicitly.
	\begin{lemma}\label{S minus K is lclass}
A left ideal $K$ of $S$ is maximal if and only if $S \setminus K$ is an $\mathcal{L}$-class.
\end{lemma}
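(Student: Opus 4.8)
The plan is to prove both implications by exploiting the correspondence between Green's $\mathcal{L}$-relation and the principal left ideals $S^{1}a = Sa \cup \{a\}$, together with the fact recorded above that a union of two left ideals is again a left ideal. The recurring technical tool I would isolate first is the elementary implication that $b \in S^{1}a$ forces $S^{1}b \subseteq S^{1}a$, since $S^{1}a$ is a left ideal containing $b$; combined with the definition of $\mathcal{L}$, this identifies $\mathcal{L}$-relatedness with equality of principal left ideals.

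For the forward direction, suppose $K$ is maximal and fix any $a \in S \setminus K$. Since $S^{1}a$ is a left ideal, $K \cup S^{1}a$ is a left ideal properly containing $K$ (as $a \notin K$), so maximality forces $K \cup S^{1}a = S$, i.e.\ $S \setminus K \subseteq S^{1}a$. Applying this inclusion to two arbitrary elements $a, b \in S \setminus K$ gives $b \in S^{1}a$ and $a \in S^{1}b$, hence $S^{1}a = S^{1}b$ and $a \mathrel{\mathcal{L}} b$; thus $S \setminus K$ lies inside a single $\mathcal{L}$-class $L_a$. To upgrade this to equality I would show $L_a \cap K = \emptyset$: if $c \in L_a \cap K$ then $S^{1}a = S^{1}c \subseteq K$, the last inclusion holding because $K$ is a left ideal containing $c$, and this forces $a \in K$, a contradiction. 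Hence $S \setminus K = L_a$ is an $\mathcal{L}$-class.

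For the converse, assume $S \setminus K = L$ is an $\mathcal{L}$-class; since $L \neq \emptyset$, $K$ is a proper left ideal. Suppose some nontrivial left ideal $J$ satisfies $K \subsetneq J$, and choose $c \in J \setminus K = J \cap L$. As $J$ is a left ideal containing $c$ we get $S^{1}c \subseteq J$, and then for an arbitrary $a \in L$ the relation $a \mathrel{\mathcal{L}} c$ yields $S^{1}a = S^{1}c \subseteq J$, so $a \in J$. This shows $L = S \setminus K \subseteq J$, whence $J = S$, contradicting that $J$ is nontrivial. Therefore $K$ is not contained in any nontrivial left ideal, i.e.\ $K$ is maximal.

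The main obstacle, as I see it, lies entirely in the forward direction: it is easy to show that $S \setminus K$ is \emph{contained} in one $\mathcal{L}$-class, but the content of the lemma is that it equals a \emph{full} $\mathcal{L}$-class. The decisive step is the argument ruling out that the ambient $\mathcal{L}$-class meets $K$, and this is precisely where the left-ideal property of $K$ (giving $S^{1}c \subseteq K$ whenever $c \in K$) is indispensable.
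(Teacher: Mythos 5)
Your proof is correct and follows essentially the same route as the paper's: maximality gives $K \cup S^{1}a = S$ for each $a \notin K$, hence $S\setminus K$ sits in one $\mathcal{L}$-class disjoint from $K$, and conversely any left ideal properly containing $K$ must absorb the whole $\mathcal{L}$-class $S\setminus K$ and so equal $S$. You merely spell out two steps the paper leaves implicit (the disjointness of $L_a$ from $K$, and why $L_a \subseteq K'$), which is fine.
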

\begin{proof}
First suppose that $S \setminus K$ is an $\mathcal{L}-$class. Let if possible $K$ is not maximal left ideal of $S$. Then there exists a nontrivial left ideal $K'$ of $S$ such that $K \subset K'$. There exists $a \in K'$ but $a \notin K$. Thus,  $L_a = S \setminus K$. Consequently, $L_a \subset K'$ gives $S = K'$, a contradiction. %Thus $K$ is a maximal left ideal of $S$. 
Conversely, suppose that $K$ is a maximal left ideal of $S$. For each $a \in S \setminus K$, maximality of $K$ implies $K \cup S^1a = S$. Consequently, $a$  $\mathcal{L}$  $b$ for every $a, b \in S \setminus K$. Thus $S \setminus K$ is contained in some $\mathcal{L}-$class and this $\mathcal{L}-$class is disjoint from $K$. It follows that $S \setminus K$ is an $\mathcal{L}-$class.   
\end{proof}

%\begin{lemma}\label{S minus K is lclass}\cite{maximalintersections}
%A left ideal $K$ of $S$ is maximal if and only if $S \setminus K$ is an $\mathcal{L}$-class.
%\end{lemma}
%\begin{proof}
%First suppose that $S \setminus K$ is an $\mathcal{L}-$class. Let if possible $K$ is not maximal left ideal of $S$. Then there exists a nontrivial left ideal $K'$ of $S$ such that $K \subset K'$. There exists $a \in K'$ but $a \notin K$. Thus,  $L_a = S \setminus K$. Consequently, $L_a \subset K'$ gives $S = K'$, a contradiction. %Thus $K$ is a maximal left ideal of $S$. 

%Conversely, suppose that $K$ is a maximal left ideal of $S$. For each $a \in S \setminus K$, maximality of $K$ implies $K \cup S^1a = S$. Consequently, $a  \mathcal{L}  b$ for every $a, b \in S \setminus K$. Thus $S \setminus K$ is contained in some $\mathcal{L}-$class and this $\mathcal{L}-$class is disjoint from $K$. It follows that $S \setminus K$ is an $\mathcal{L}-$class.   
%\end{proof}
A semigroup $S$ is said to be \emph{simple} if it has no proper ideal. Let $\mathcal{E}$ be the set of idempotents of a semigroup $S$. If $e, f \in \mathcal{E}$, we define $e \leq f$ to mean $ef =fe= e.$ Recall that a semigroup $S$ is called \emph{completely simple} if $S$ is  simple  and contains a primitive idempotent. By \emph{primitive idempotent} we mean an idempotent which is minimal within the set of all idempotents under the relation $\leq$.
	\begin{lemma}[{ \cite[Corollary 2.49]{b.clifford61vol1}}]
	A completely simple semigroup is the union of its minimal left [right] ideals.
	\end{lemma}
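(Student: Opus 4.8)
The plan is to reduce the statement to two essentially independent facts about a simple semigroup $S$: first, that the union of all minimal left ideals (once it is known to be nonempty) is automatically a two-sided ideal; and second, that complete simplicity guarantees at least one minimal left ideal. Granting these, let $K$ denote the union of all minimal left ideals of $S$. Then $K$ is a nonempty two-sided ideal, and since $S$ is simple (its only two-sided ideal is $S$ itself) we get $K=S$, which is precisely the claim. The right-ideal version follows by the dual argument, replacing $\mathcal{L}$ by the corresponding right relation and $Se$ by $eS$ throughout.

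For the first fact I would record the elementary observation that a right translate of a minimal left ideal is again a minimal left ideal. If $L$ is a minimal left ideal and $s\in S$, then $Ls$ is a left ideal, and for any left ideal $M\subseteq Ls$ the pullback $M'=\{x\in L: xs\in M\}$ is a nonempty left ideal contained in $L$; minimality of $L$ forces $M'=L$, whence $Ls\subseteq M$ and $Ls=M$. Consequently, if $x$ lies in a minimal left ideal $L$ and $s\in S$, then $xs$ lies in the minimal left ideal $Ls$, so $K$ is closed under right multiplication. Being a union of left ideals, $K$ is a left ideal, and therefore a two-sided ideal.

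The crux is the second fact, and here complete simplicity—rather than mere simplicity—enters through the primitive idempotent $e$. I claim $Se$ is a minimal left ideal, and the heart of the matter is to show that the local monoid $eSe$ (with identity $e$) is a group. Primitivity makes $e$ the only idempotent of $eSe$: an idempotent $f=efe$ satisfies $ef=fe=f$, so $f\le e$ and hence $f=e$. Simplicity gives $SxS=S$ for every $x$, and sandwiching a factorization $e=pxq$ between copies of $e$ yields $p_0,q_0\in eSe$ with $p_0 x q_0=e$. The decisive step is then the purely monoid-theoretic lemma that in a monoid with a unique idempotent every one-sidedly invertible element is invertible: if $pu=e$ then $up$ is idempotent, hence $up=e$. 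Feeding $p_0 x q_0=e$ into this lemma shows $p_0$ and $q_0$ are invertible, whence $x=p_0^{-1}q_0^{-1}$ is invertible and $eSe$ is a group. I expect this passage from ``simple with a unique idempotent'' to ``group'' to be the main obstacle, since it is the only point where primitivity does essential work.

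With $eSe$ a group, minimality of $Se$ follows: for $a\in Se$ we have $ae=a$, and inverting $ea$ inside $eSe$ produces $w\in eSe$ with $wa=w(ea)=e$, so $e\in Sa$ and hence $Sa=Se$. Thus any nonempty left ideal $M\subseteq Se$ contains $Sa=Se$ for every $a\in M$, forcing $M=Se$; so $Se$ is minimal and $K\neq\varnothing$. Combining this with the first fact gives $K=S$ and completes the proof.
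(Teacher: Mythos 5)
The paper does not prove this lemma at all; it is quoted verbatim from Clifford--Preston (Corollary 2.49), so there is no in-paper argument to compare against. Your proposal is a correct, self-contained reconstruction of the classical proof, and both halves check out. The reduction is sound: $Ls$ is a left ideal, the pullback $M'=\{x\in L: xs\in M\}$ is a nonempty left ideal inside $L$, so minimality of $L$ forces $Ls$ minimal, whence the union $K$ of all minimal left ideals is closed under right multiplication and, being a union of left ideals, is a two-sided ideal; simplicity then gives $K=S$ once $K\neq\varnothing$. For the existence of a minimal left ideal, your identification of the real content --- that $eSe$ is a group, proved via the unique-idempotent monoid lemma ($pu=e$ implies $up$ idempotent, hence $up=e$) applied to a factorization $p_0xq_0=e$ obtained from $SxS=S$ --- is exactly where primitivity does its work, and the subsequent deduction that $Sa=Se$ for all $a\in Se$ (hence $Se$ minimal) is correct. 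One small point of wording: when you sandwich $e=pxq$ between copies of $e$ to get $p_0xq_0=e$ with $p_0=epe$, $q_0=eqe$, you need $x\in eSe$ (so that $ex=xe=x$); since the goal is precisely to invert an arbitrary element of $eSe$, this is how the step should be read, and then the computation goes through. The right-ideal statement is indeed the exact dual. Your route is somewhat leaner than the source's, which reaches this corollary through the machinery of $0$-minimal ideals and the Rees structure theory; you get the same conclusion from first principles at the cost of proving the local-group fact directly.
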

	The following lemma can be proved easily.
	\begin{lemma}\label{union minimal}
	       Let $S$ be a completely simple semigroup with $n$ minimal left ideals. Then any nontrivial left ideal of $S$ is union of these minimal left ideals.
	\end{lemma}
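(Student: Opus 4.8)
The plan is to use the preceding lemma (Corollary 2.49) to write $S$ as the union of its $n$ minimal left ideals, say $S = \bigcup_{i=1}^{n} L_i$, and then to decompose an arbitrary nontrivial left ideal $A$ along this cover. Since $A \subseteq S$, I would first write
\[
A = A \cap S = A \cap \Big( \bigcup_{i=1}^{n} L_i \Big) = \bigcup_{i=1}^{n} (A \cap L_i),
\]
so that the entire problem reduces to understanding each piece $A \cap L_i$.

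The key step is to establish the dichotomy that, for each $i$, either $A \cap L_i = \emptyset$ or $A \cap L_i = L_i$. To see this, suppose $A \cap L_i \neq \emptyset$. Since both $A$ and $L_i$ are left ideals, their intersection is again a left ideal of $S$: if $x \in A \cap L_i$ and $s \in S$, then $sx$ lies in both $A$ and $L_i$, whence $sx \in A \cap L_i$. Thus $A \cap L_i$ is a nonempty left ideal contained in $L_i$, and the minimality of $L_i$ forces $A \cap L_i = L_i$, that is, $L_i \subseteq A$. This is exactly the observation recorded in the preliminaries that a minimal left ideal $I$ satisfies either $I \subset A$ or $I \cap A = \emptyset$ for any left ideal $A$.

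Combining the two steps, I obtain $A = \bigcup_{i \in \Lambda} L_i$, where $\Lambda = \{\, i : L_i \subseteq A \,\}$, which exhibits $A$ as a union of minimal left ideals and completes the argument. I do not expect any serious obstacle here: the only points requiring care are to justify that $A \cap L_i$ is genuinely a left ideal before invoking minimality, and to ensure that the decomposition accounts for \emph{all} of $A$ — which is precisely what Corollary 2.49 guarantees, since without it a nontrivial left ideal might contain elements lying outside every minimal left ideal.
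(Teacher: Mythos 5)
Your proof is correct; the paper itself omits the argument (it only remarks that the lemma ``can be proved easily''), and your route is exactly the intended one, combining Corollary 2.49 with the dichotomy recorded in the preliminaries that a minimal left ideal $I$ satisfies either $I \subset A$ or $I \cap A = \emptyset$ for any left ideal $A$. You also correctly flag the one point needing care, namely checking that a nonempty $A \cap L_i$ is itself a left ideal before invoking minimality.
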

	
	%%%%%%%%%%%%%%%%%%%%%%%%%%%%%%%%%%%%%%%%%%%%%%%%%%%%%%%%%%%%%%%%%%%%%%%%%%%%%%%%%%%	%%%%%%%%%%%%%%%%%%%%%%%%%%%%%%%%%%%%%%%%%%%%%%%%%%%%%%%%%%%%%%%%%%%%%%%%%%%%%%%%%%%%%%%%%%%%%%%%%%%%%%%%%%%%%%%%%%%%%%%%%%%%%%%%%%%%%%%%%%%%%%%%%%%%%%%%%%%%%%%%%%%%%%%%%%%%%%%%%
	We also require the following graph theoretic  notions \cite{westgraph}. A \emph{graph} $\Gamma$ is a pair  $\Gamma = (V, E)$, where $V = V(\Gamma)$ and $E = E(\Gamma)$ are the set of vertices and edges of $\Gamma$, respectively. We say that two different vertices $u, v$ are $\mathit{adjacent}$, denoted by $u \sim v$ or $(u,v)$, if there is an edge between $u$ and $v$. We write $u \nsim v$, if there is no edge between $u$ and $v$. The \emph{distance} between two vertices $u, v$ in $\Gamma$ is the number of edges in a shortest path connecting them and it is denoted by $d(u, v)$. If there is no path between $u$ and $v$, we say that the distance between  $u$ and $v$ is \emph{infinity} and we write as $d(u, v) = \infty$. The diameter $diam(\Gamma)$ of $\Gamma$ is the greatest distance between any pair of vertices. The \emph{degree} of the vertex $v$ in $\Gamma$ is the number of edges incident to $v$ and it is denoted by $deg(v)$. %The minimum degree is denoted by \emph{$\delta (\Gamma)$}.
	A graph $\Gamma$ is \emph{regular} if degree of every vertex is same. A graph $\Gamma$ is said to be \emph{biregular} if all vertices have two distinct degrees. A \emph{cycle} is a closed walk with distinct vertices except for the initial and end vertex, which are equal and a cycle of length $n$ is denoted by $C_n$. A graph $\G$ is \emph{bipartite}  if $V(\Gamma)$ is the union of two disjoint independent  sets.
	It is well known that a graph is bipartite if and only if it has no odd cycle {\cite[Theorem 1.2.18]{westgraph}}.
	The \emph{girth} of $\Gamma$ is the length of its shortest cycle and is denoted by ${g(\Gamma)}$. A connected graph $\Gamma$ is Eulerian if and only if degree of every vertex is even {\cite[Theorem 1.2.26]{westgraph}}. A \emph{subgraph}  of $\Gamma$ is a graph $\Gamma'$ such that $V(\Gamma') \subseteq V(\Gamma)$ and $E(\Gamma') \subseteq E(\Gamma)$.  A subgraph $\Gamma'$ of  $\Gamma$ is called an \emph{induced subgraph} by the elements of  $V(\Gamma') \subseteq V(\Gamma)$ if for $u, v \in V(\Gamma')$, we have $u \sim v$   in $\Gamma'$ if and only if $u \sim v$ in $\Gamma$. A subset $X$ of $V(\Gamma)$ is said to be \emph{independent} if no two vertices of $X$ are adjacent. The \emph{independence number} of  $\Gamma$  is the cardinality of the largest independent set and it is denoted by $\alpha(\Gamma)$. The \emph{chromatic number} of $\Gamma$, denoted by $\chi(\Gamma)$, is the smallest number of colors needed to color the vertices  of $\Gamma$ so that no two adjacent vertices share the same color. A \emph{clique} in $\Gamma$ is a set of pairwise adjacent vertices. The \emph{clique number} of $\Gamma$ is the size of maximum clique in $\Gamma$ and it is denoted by $\omega(\Gamma)$. It is well known that $\omega(\Gamma) \leq \chi(\Gamma)$ (see \cite{westgraph}). A graph $\Gamma$ is \emph{perfect} if $\omega(\Gamma') = \chi(\Gamma')$ for every induced subgraph $\Gamma'$ of $\Gamma$.
	Recall that the {\em complement} $\overline{\Gamma}$ of $\Gamma$ is a graph with same vertex set as $\Gamma$ and distinct vertices $u, v$ are adjacent in $\overline{\Gamma}$ if they are not adjacent in $\Gamma$. A subgraph $\Gamma'$ of $\Gamma$ is called \emph{hole} if $\Gamma'$  is a  cycle as an induced subgraph, and $\Gamma'$  is called an \emph{antihole} of   $\Gamma$ if   $\overline{\Gamma'}$ is a hole in $\overline{\Gamma}$.

	\begin{theorem}\label{strongperfecttheorem}\cite{strongperfectgraph}
		A finite graph $\Gamma$
		is perfect if and only if it does not contain hole or antihole of odd length at least $5$.
	\end{theorem}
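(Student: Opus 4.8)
The plan is to split the biconditional into its two implications and treat them very differently, since one direction is elementary while the other is the entire substance of the theorem. First I would dispose of the easy direction: a perfect graph contains no odd hole and no odd antihole of length at least $5$. Because perfectness is inherited by induced subgraphs, it suffices to check that the two offending families are themselves imperfect. For an odd hole $C_{2k+1}$ with $k \geq 2$, being a chordless cycle it is triangle-free, so $\omega(C_{2k+1}) = 2$, whereas an odd cycle needs three colours, giving $\chi(C_{2k+1}) = 3 > \omega(C_{2k+1})$. For the odd antihole $\overline{C_{2k+1}}$ one computes $\omega(\overline{C_{2k+1}}) = \alpha(C_{2k+1}) = k$, while a proper colouring of $\overline{C_{2k+1}}$ is a partition of $V(C_{2k+1})$ into cliques (vertices and edges), forcing $\chi(\overline{C_{2k+1}}) = k+1 > k$. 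Hence neither family is perfect, and any graph containing one of them as an induced subgraph fails $\omega = \chi$.

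The hard direction -- that every graph with no odd hole and no odd antihole (a \emph{Berge} graph) is perfect -- is the Strong Perfect Graph Theorem proper, and here I would follow the structural strategy. I would argue by contradiction: since the Berge property is hereditary, it is enough to suppose there is a \emph{minimal} imperfect Berge graph $\Gamma$, meaning $\Gamma$ itself is not perfect but every proper induced subgraph of $\Gamma$ is. The aim is then to establish a decomposition theorem asserting that every Berge graph is either \emph{basic} or admits one of a short list of structural decompositions, and finally to rule out both possibilities for $\Gamma$, yielding a contradiction.

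For the basic case I would verify directly that each of the five basic classes is perfect: bipartite graphs, line graphs of bipartite graphs (where $\omega = \chi = \Delta$ by the König edge-colouring theorem), their complements (handled via the weak Perfect Graph Theorem that perfectness is preserved under complementation), and the double split graphs. For the decomposition case I would show that none of the admissible decompositions -- such as a balanced skew partition or a $2$-join in $\Gamma$ or in $\overline{\Gamma}$ -- can occur in a minimal imperfect graph, because each would let perfectness be reassembled from the strictly smaller pieces, contradicting minimality of $\Gamma$.

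The hard part will be the decomposition theorem itself: that a Berge graph which is not basic must admit one of these decompositions. This is the deep combinatorial core, requiring a delicate analysis of how even and odd induced paths attach to configurations such as prisms, wheels, and the line graph of $K_{3,3}$, and it is precisely the portion that admits no short argument. Accordingly, in practice I would invoke the Chudnovsky--Robertson--Seymour--Thomas result cited as \cite{strongperfectgraph} rather than attempt to reconstruct its several hundred pages here.
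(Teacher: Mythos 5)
Your proposal is correct and ends where the paper does: the paper states this result with no proof at all, simply citing Chudnovsky--Robertson--Seymour--Thomas, and your hard direction likewise defers to that citation (your elementary verification of the easy direction and the computation $\chi(\overline{C_{2k+1}})=k+1>k=\omega(\overline{C_{2k+1}})$ are accurate). No discrepancy with the paper's treatment.
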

	%%%%%%%%%%%%%%%%%%%%%%%%%%%%%%%%%%%%%%%%%%%%%%%%%%%%%%%%%%%%%%%%%%%%%%%%%%%%%%%%%%%%%%%%%%%%%%%%%%%%%%%%%%%%%%%%%%%%%%%%%%%%%%%%%%%%%%%%%%%%%%%%%%%%%%%%%%%%%%%%%%%%%%%%%%%%%%%%%
	A subset $D$ of $V(\Gamma)$ is said to be a dominating set if any vertex in $V(\Gamma) \setminus D$ is adjacent to at least one vertex in $D$. If $D$ contains only one vertex then that vertex is called dominating vertex. The \emph{domination number} $\gamma(\Gamma)$ of $\Gamma$ is the minimum size of a dominating set in $\Gamma$. A graph $\Gamma$ is said to be \emph{triangulated} if any vertex is a vertex of a triangle. %A \emph{vertex (edge) cutset} in a connected graph $\Gamma$ is a set of vertices (edges) whose deletion increases the number of connected components of $\Gamma$. The \emph{vertex connectivity} (\emph{edge connectivity}) of a connected graph $\Gamma$ is the minimum size of a vertex (edge) cutset and it is denoted by $\kappa (\Gamma)$ $ ({\kappa'}(\Gamma))$. It is well known that $\kappa (\Gamma)$ $\leq$ ${\kappa'}(\Gamma)$ $\leq$ $\delta({\Gamma})$ (see {\cite[Theorem 4.1.9]{westgraph}}). 
	An edge cover of $\Gamma$ is a subset $L$ of $E(\Gamma)$  such that every vertex of $\Gamma$ is incident to some edge of $L$. The minimum cardinality of an edge cover in $\Gamma$ is called the edge covering number, it is denoted by $\beta'({\Gamma})$. A vertex cover of $\Gamma$ is a subset $Q$ of $V(\Gamma)$ such that it contains at least one endpoint of every edge of $\Gamma$. The minimum cardinality of a vertex cover in $\Gamma$ is called the vertex covering number, it is denoted by $\beta({\Gamma})$. A matching of $\Gamma$  is a set of edges with no share endpoints. The maximum cardinality of a matching in $\Gamma$  is called the matching number of $\Gamma$ and it is denoted by $\alpha'(\Gamma)$. Recall that in a graph $\Gamma$, we have $\alpha(\Gamma) + \beta({\Gamma}) = |V(\Gamma)|$ and for $\Gamma$ with no isolated vertices, we have $\alpha'(\Gamma) + \beta'({\Gamma}) = |V(\Gamma)|$. A graph $\Gamma$ is said to be planar if it can be drawn on a plane without any crossing of its edges. A homomorphism of graph $\Gamma$ to a graph $\Gamma'$  is a mapping $f$ from  $V(\Gamma)$  to $V(\Gamma')$ with the property that if $u \sim v$, then $uf \sim vf$  for all $ u, v \in  V(\Gamma)$. A retraction is a homomorphism $f$ from a graph $\Gamma$  to a subgraph $\Gamma'$  of $\Gamma$ such that $vf = v$ for each vertex $v$ of $\Gamma'$. In this case the subgraph $\Gamma'$ is called a retract of $\Gamma$.         
	
%%%%%%%%%%%%%%%%%%%%%%%%%%%%%%%%%%%%%%%%%%%%%%%%%%%%%%%%%%%%%%%%%%%%%%%%%%%%%%%%%%%%%%%%%%%%%%%%%%%%%%%%%%%%%%%%%%%%%%%%%%%%%%%%%%%%%%%%%%%%%%%%%%%%%%%%%%%%%%%%%%%%%%%%%%%%%%%%%

	%%%%%%%%%%%%%%%%%%%%%%%%%%%%%%%%%%%%%%%%%%%%%%%%%%%%%%%%%%%%%%%%%%%%%%%%%%%%%%%%%%%%%%%%%%%%%%%%%%%%%%%%%%%%%%%%%%%%%%%%%%%%%%%%%%%%%%%%%%%%%%%%%%%%%%%%%%%%%%%%%%%%%%%%%%%%%%%%%
		\section{Inclusion ideal graph of a semigroup}
In this section, we study the algebraic properties of $S$ as well as graph theoretic properties of the inclusion ideal graph $\mathcal{I}n(S)$. First we investigate connectedness of $\mathcal{I}n(S)$. We show that $diam(\mathcal{I}n(S)) \leq 3$ if it is connected. %Then the semigroups, in terms of left ideals, for which $diam(\mathcal{I}n(S)) = 3$ are characterized. 
Moreover, the clique number, planarity, perfectness and the girth of $\mathcal{I}n(S)$ are investigated.    %of an arbitrary semigroup $S$. We discuss connectedness, planarity and perfectness of $\mathcal{I}n(S)$. Moreover, we obtain the girth of $\mathcal{I}n(S)$.	
	\begin{theorem}\label{disconnected}
		The inclusion ideal graph $\mathcal{I}n(S)$ is disconnected if and only if $S$ contains at least two minimal left ideals and every nontrivial left ideals of $S$ is minimal as well as maximal.
	\end{theorem}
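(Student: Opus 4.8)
The plan is to treat $\mathcal{I}n(S)$ as the comparability graph of the poset of nontrivial left ideals and to exploit two closure facts: the union of left ideals is a left ideal, and a nonempty intersection of left ideals is a left ideal. The engine of the forward direction is the following \emph{union observation}: if $I,J$ are distinct nontrivial left ideals with $I\cup J\neq S$, then $I\cup J$ is again a nontrivial left ideal lying above both, so either $I,J$ are already comparable or $I\sim I\cup J\sim J$ is a path; in every case $I$ and $J$ lie in one component. Equivalently, any two vertices in different components must satisfy $I\cup J=S$.

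The heart of the argument is the claim that a \emph{single} proper inclusion forces connectedness. So I would assume there exist nontrivial left ideals $I\subsetneq J$ and show that an arbitrary vertex $K$ lies in the component of $J$, splitting into cases on $K\cup J$ and $K\cup I$. If $K\cup J\neq S$, the union observation connects $K$ to $J$. If $K\cup J=S$ but $K\cup I\neq S$, then $K$ connects to $I$ and the edge $I\sim J$ finishes the chain. The remaining case $K\cup J=K\cup I=S$ is where the union construction degenerates and one must switch to the intersection: from $K\cup I=S$ we get $S\setminus I\subseteq K$, whence $\emptyset\neq J\setminus I\subseteq K\cap J$, so $K\cap J$ is a nonempty left ideal sitting properly inside $J\neq S$, i.e.\ a genuine vertex lying below both $K$ and $J$. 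This yields $K\sim K\cap J\sim J$. Thus in all cases $K$ is in the component of $J$, and $\mathcal{I}n(S)$ is connected.

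Taking the contrapositive gives part of the statement immediately: if $\mathcal{I}n(S)$ is disconnected, then no two distinct nontrivial left ideals are comparable, so the nontrivial left ideals form an antichain, which is precisely the assertion that every nontrivial left ideal is simultaneously minimal and maximal. Moreover a disconnected graph has at least two vertices, and by the antichain conclusion each vertex is a minimal left ideal, so $S$ has at least two minimal left ideals. For the converse I would argue directly: if every nontrivial left ideal is both minimal and maximal, then no proper inclusion can hold between distinct vertices, so $\mathcal{I}n(S)$ has no edges at all; the hypothesis of at least two minimal left ideals provides at least two vertices, and an edgeless graph on at least two vertices is disconnected.

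I expect the main obstacle to be the degenerate case $K\cup J=K\cup I=S$ in the forward direction, where the union-based connector $I\cup J$ is unavailable and one has to pivot to the intersection $K\cap J$; the decisive points there are verifying that $K\cap J$ is nonempty (via $S\setminus I\subseteq K$) and that it is a proper nontrivial left ideal, so that it really is a vertex joining $K$ to $J$. Everything else reduces to routine bookkeeping with the union observation and the definitions of minimal and maximal left ideals.
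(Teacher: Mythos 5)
Your proposal is correct and follows essentially the same route as the paper: both arguments connect two given vertices through the union $I\cup J$ when it is proper, and when the union equals $S$ both pivot to the nonempty intersection (your observation $S\setminus I\subseteq K$, hence $\emptyset\neq J\setminus I\subseteq K\cap J$, is exactly the paper's "$x\in I_1\setminus I_k$ forces $x\in I_2$" step). The only difference is organizational — you prove the single claim that one proper inclusion forces connectedness and take the contrapositive, whereas the paper separately derives minimality and maximality of ideals in distinct components by contradiction — but the underlying constructions are identical.
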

	\begin{proof}
		Suppose that the graph $\mathcal{I}n(S)$ is disconnected. Without loss of generality, we may assume that there exist at least two nontrivial left ideals $I_1,     I_2$ of $S$ such that $I_1 \in C_1$ and $I_2 \in C_2$, where $C_1$ and $C_2$ are two distinct components of $\mathcal{I}n(S)$. Let if possible, $I_1$ is not minimal. Then there exists a nontrivial left ideal $I_k$ of $S$ such that  $I_k \subset I_1$. Now, we have the following cases.
		
		\noindent\textbf{Case 1.} $I_k \cup I_2 \neq S $. Then we have $I_1 \sim I_k \sim (I_k \cup I_2) \sim I_2$, a contradiction.
		
		\noindent\textbf{Case 2.} $I_k \cup I_2 = S $. Then clearly  $I_1 \cup I_2 = S $. Let $x \in I_1$ but $x \notin I_k $. Thus, $x \in I_2$ so that $x \in I_1  \cap I_2$. We get $I_1 \sim (I_1 \cap I_2) \sim  I_2$,  again a contradiction.  Thus, $I_1$ is minimal. Similarly, we obtain $I_2$ is minimal.
		
		Further, on contrary suppose that $I_1$ is not maximal. Then there exists a nontrivial left ideal $I_k$ of $S$ such that $I_1 \subset I_k$. Now we get a contradiction in the following possible cases.
		
		\noindent\textbf{Case 1.} $I_1 \cup I_2 \neq S$. Then $I_1 \sim (I_1 \cup I_2) \sim  I_2$ gives a contradiction.
		
		\noindent\textbf{Case 2.} $I_1 \cup I_2 = S$. Then clearly $I_k \cup I_2 = S$ so that there exists $x \in I_k \cap I_2$. Thus,  we have  $I_1 \sim I_k \sim (I_k \cap I_2) \sim I_2$, again a contradiction. Hence $I_1$ is maximal. Similarly, one can observe that $I_2$ is maximal.
		
		The converse follows from Remark \ref{disjoint intersection minimal}.
	\end{proof}
	
	%%%%%%%%%%%%%%%%%%%%%%%%%%%%%%%%%%%%%%%%%%%%%%%%%%%%
	\begin{corollary}\label{null graph}
If the graph $\mathcal{I}n(S)$ is disconnected then it is a null graph (i.e. it has no edge). 
\end{corollary}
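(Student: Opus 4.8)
The plan is to read this off directly from Theorem~\ref{disconnected}. Assume $\mathcal{I}n(S)$ is disconnected. Then that theorem tells us that $S$ has at least two minimal left ideals and, crucially, that \emph{every} nontrivial left ideal of $S$ is simultaneously minimal and maximal. The whole argument will hinge on exploiting this double condition, so I would record it as the starting point and then reason purely graph-theoretically about adjacency.

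The key step is to recall how edges arise in $\mathcal{I}n(S)$: two distinct vertices $I, J$ are adjacent precisely when one is \emph{properly} contained in the other, say $I \subset J$. I would then observe that such a proper inclusion is incompatible with the conclusion of Theorem~\ref{disconnected}. Indeed, $I \subset J$ exhibits the nontrivial left ideal $I$ sitting inside the nontrivial left ideal $J$, so $I$ cannot be maximal (equivalently, $J$ properly contains the nontrivial left ideal $I$, so $J$ cannot be minimal). Either reading contradicts the statement that every nontrivial left ideal is both minimal and maximal. Hence no edge of $\mathcal{I}n(S)$ can exist, giving $E(\mathcal{I}n(S)) = \varnothing$; that is, $\mathcal{I}n(S)$ is a null graph.

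I do not expect any genuine obstacle here: the corollary is an immediate consequence of the structural characterization already established. The only thing worth checking carefully is the logical translation, namely that ``minimal and maximal for every nontrivial left ideal'' literally forbids all proper inclusions among the vertices, and proper inclusion is exactly the adjacency relation defining $\mathcal{I}n(S)$. Once that is spelled out, the proof is a single line.
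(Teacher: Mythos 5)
Your argument is correct and is exactly the reasoning the paper intends (the corollary is stated without a written proof as an immediate consequence of Theorem~\ref{disconnected}): disconnectedness forces every nontrivial left ideal to be both minimal and maximal, and any edge would be a proper inclusion $I \subset J$ contradicting the maximality of $I$ (or the minimality of $J$). Nothing is missing.
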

\begin{lemma}\label{maximal left ideal1}
	If  the semigroup $S = I_1 \cup I_2$, where $I_1$ and $I_2$ are  minimal left ideals then $I_1$ and $I_2$ are maximal left ideals.
\end{lemma}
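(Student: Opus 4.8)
The plan is to exploit the fact that, by Remark \ref{disjoint intersection minimal}, the two minimal left ideals $I_1$ and $I_2$ are disjoint, so together with $S = I_1 \cup I_2$ we obtain the clean complements $S \setminus I_1 = I_2$ and $S \setminus I_2 = I_1$. I would establish that $I_1$ is maximal and then simply interchange the roles of $I_1$ and $I_2$ to conclude the statement for $I_2$.

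To show $I_1$ is maximal I would argue by contradiction. Suppose $I_1$ is not maximal; then by definition there is a nontrivial left ideal $J$ with $I_1 \subsetneq J$, and since $J$ is nontrivial we have $J \neq S$. As the containment is proper, pick $x \in J \setminus I_1$. Because $S = I_1 \cup I_2$ and $x \notin I_1$, necessarily $x \in I_2$, so $x \in J \cap I_2$ and in particular $J \cap I_2 \neq \emptyset$.

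The central step is that $J \cap I_2$ is itself a left ideal, which follows from $S(J \cap I_2) \subseteq SJ \cap SI_2 \subseteq J \cap I_2$. Being a nonempty left ideal contained in the minimal left ideal $I_2$, minimality forces $J \cap I_2 = I_2$, i.e.\ $I_2 \subseteq J$. Combining this with $I_1 \subseteq J$ yields $S = I_1 \cup I_2 \subseteq J$, hence $J = S$, contradicting $J \neq S$. Thus $I_1$ is maximal, and by the symmetric argument $I_2$ is maximal as well.

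There is no serious obstacle here; the only point demanding care is the interaction between the definition of \emph{nontrivial} left ideal and the disjointness supplied by Remark \ref{disjoint intersection minimal}, which is exactly what guarantees that $J \cap I_2$ is a genuinely nonempty left ideal so that the minimality of $I_2$ applies. As an alternative route one could phrase the whole argument through Lemma \ref{S minus K is lclass}: since $S \setminus I_1 = I_2$ and every element of the minimal left ideal $I_2$ lies in a single $\mathcal{L}$-class, one checks that this $\mathcal{L}$-class coincides with $I_2$, so $S \setminus I_1$ is an $\mathcal{L}$-class and $I_1$ is maximal.
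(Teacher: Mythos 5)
Your proof is correct and follows essentially the same route as the paper: both argue that a nontrivial left ideal properly containing $I_1$ must pick up an element of $I_2$, invoke minimality of $I_2$ to force $I_2\subseteq J$, and conclude $J=S=I_1\cup I_2$, a contradiction. The only cosmetic difference is that you use the left ideal $J\cap I_2$ where the paper uses $Sy$ for $y\in J\setminus I_1$; the disjointness from Remark~\ref{disjoint intersection minimal} that you emphasize is not actually needed, since $x\in J\setminus I_1$ lands in $I_2$ directly from $S=I_1\cup I_2$.
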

\begin{proof}
	Suppose that $S = I_1 \cup I_2$ where $I_1$ and  $I_2$ are minimal left ideals of $S$. If there exists another nontrivial left ideal $I_k$ of $S$ then there exists $x \in I_k$. Consequently, either $x \in I_1$ or $x \in I_2$  follows that either $Sx \subset I_1$ or $Sx \subset I_2$. Since $I_1$ and $I_2$ are minimal left ideals of $S$ we get either $I_1 = Sx \subset I_k$ or $I_2  \subset I_k$. If $I_1 \subset I_k$ then there exists $y \in I_k$ such that $y \notin I_1$. This implies that $y \in I_2$ so that $Sy \subset I_2$. Consequently, $I_2 = Sy \subset I_k$. Thus, $I_1 \cup I_2 = S \subset I_k$ so that $I_k = S$, a contradiction. Similarly, one can observe for $I_2 \subset I_k$ we have $I_k = S$, again a contradiction. Therefore, $I_1$ and $I_2$ are maximal left ideals.
	\end{proof}
	%%%%%%%%%%%%%%%%%%%%%%%%%%%%%%%%%%%%%%%%%%%%%%%%%%%%
	\begin{theorem}\label{two minimal}
		The graph $\mathcal{I}n(S)$ is disconnected if and only if $S$ is the union of exactly two minimal left ideals.
	\end{theorem}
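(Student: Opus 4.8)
The plan is to establish both directions of the equivalence by leaning on the structural description already obtained in Theorem \ref{disconnected}, together with Lemma \ref{maximal left ideal1} and Remark \ref{disjoint intersection minimal}.

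For the forward implication, I would assume $\mathcal{I}n(S)$ is disconnected and invoke Theorem \ref{disconnected} to conclude that $S$ possesses at least two minimal left ideals and that \emph{every} nontrivial left ideal of $S$ is simultaneously minimal and maximal. Fixing two distinct minimal left ideals $I_1, I_2$, the first step is to consider the left ideal $I_1 \cup I_2$ (recall that the union of two left ideals is again a left ideal). Since $I_1$ and $I_2$ are disjoint and nonempty by Remark \ref{disjoint intersection minimal}, the containment $I_1 \subsetneq I_1 \cup I_2$ is proper; were $I_1 \cup I_2$ a nontrivial left ideal, this would violate the maximality of $I_1$. Hence $I_1 \cup I_2 = S$, so $S$ is a union of two minimal left ideals. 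To upgrade this to \emph{exactly} two, I would suppose a third minimal left ideal $I_3$ existed: then $I_3 \subseteq S = I_1 \cup I_2$, yet $I_3$ is disjoint from both $I_1$ and $I_2$ by Remark \ref{disjoint intersection minimal}, forcing $I_3 = \varnothing$, a contradiction.

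For the converse, I would start from $S = I_1 \cup I_2$ with $I_1, I_2$ minimal left ideals and apply Lemma \ref{maximal left ideal1} to deduce that both are maximal. The crucial step is then to argue that $I_1$ and $I_2$ are the only nontrivial left ideals of $S$: any nontrivial left ideal $K$ contains a minimal left ideal, so $K \supseteq I_1$ or $K \supseteq I_2$, and the maximality of $I_1$ (resp. $I_2$) prevents proper containment, whence $K = I_1$ or $K = I_2$. Since $I_1$ and $I_2$ are disjoint, neither is contained in the other, so the two vertices of $\mathcal{I}n(S)$ are nonadjacent and the graph is disconnected (indeed it is edgeless, consistent with Corollary \ref{null graph}).

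The routine verifications (union of left ideals is a left ideal, every nontrivial left ideal contains a minimal one) are supplied by the preliminaries, so no single step is genuinely difficult; the main point requiring care is the bookkeeping that rules out a third minimal left ideal and that pins down $I_1, I_2$ as the complete list of nontrivial left ideals, both of which hinge on combining disjointness (Remark \ref{disjoint intersection minimal}) with maximality.
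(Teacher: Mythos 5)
Your proposal is correct and follows essentially the same route as the paper: both directions are reduced to Theorem \ref{disconnected}, Lemma \ref{maximal left ideal1} and Remark \ref{disjoint intersection minimal}. The only cosmetic difference is that in the forward direction you exploit the maximality of every nontrivial left ideal to force $I_1 \cup I_2 = S$ directly, whereas the paper exploits minimality to rule out $I_1 \cup I_2$ being a vertex; these are interchangeable consequences of the same structural theorem.
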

	\begin{proof}
		Suppose first that $\mathcal{I}n(S)$ is disconnected. Then by Theorem \ref{disconnected}, each nontrivial left ideal is minimal. Suppose $S$ has at least three minimal left ideals, namely $I_1, I_2$
		and $I_3$. Then $I_1 \cup I_2$ is a nontrivial left ideal of $S$ which is not minimal. Consequently, by Theorem \ref{disconnected}, we get a contradiction of the fact that $\mathcal{I}n(S)$ is disconnected. Thus, $S$ has exactly two minimal left ideals. If $S \neq I_1 \cup I_2$ then $I_1 \cup I_2$ is a nontrivial left ideal which is not minimal, a contradiction ( cf. Theorem \ref{disconnected}). Thus, $S = I_1 \cup I_2$. Converse part follows from   Theorem \ref{disconnected} and Lemma \ref{maximal left ideal1}.
	\end{proof}
	%%%%%%%%%%%%%%%%%%%%%%%%%%%%%%%%%%%%%%%%%%%%%%%%%%%%%%%%
	
	\begin{theorem}\label{diameter3}
		If $\mathcal{I}n(S)$ is a connected graph then $diam(\mathcal{I}n(S))$ $\leq$ $3$.	
	\end{theorem}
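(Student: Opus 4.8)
The plan is to fix two distinct, non-adjacent vertices $I$ and $J$ of $\mathcal{I}n(S)$ (adjacent vertices being at distance $1$) and to exhibit a path of length at most $3$ joining them, repeatedly exploiting that the union of two left ideals is again a left ideal and that a nonempty intersection of two left ideals is again a left ideal. First I would dispose of the two easy configurations. If $I \cup J \neq S$, then $I \cup J$ is a nontrivial left ideal properly containing both $I$ and $J$ (non-adjacency forces neither to contain the other), so $I \sim (I \cup J) \sim J$ and $d(I,J) \leq 2$. Likewise, if $I \cap J$ is a nontrivial left ideal, then it is properly contained in both $I$ and $J$, giving $I \sim (I \cap J) \sim J$, so again $d(I,J) \leq 2$.

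This leaves the decisive case where $I \cup J = S$ while $I \cap J$ is trivial (empty, or the zero ideal when $S$ has a zero). Here the idea is to descend below one of the two ideals and then climb back up on the other side. Suppose $I$ is not minimal and choose a nontrivial left ideal $A$ with $A \subsetneq I$. Then $A \cap J \subseteq I \cap J$ is trivial, and I would first verify that $A \cup J \neq S$: otherwise any $x \in I \setminus A$ would lie in $J$ and hence in the trivial ideal $I \cap J$, which is impossible for such an $x$. Consequently $A \cup J$ is a nontrivial left ideal lying strictly between $A$ (respectively $J$) and $S$, so the path $I \sim A \sim (A \cup J) \sim J$ yields $d(I,J) \leq 3$. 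The same argument applies verbatim if instead $J$ is not minimal.

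The main obstacle is therefore to rule out, in this last case, the possibility that \emph{both} $I$ and $J$ are minimal. But if $I$ and $J$ are both minimal with $I \cup J = S$ and $I \cap J$ trivial, then $S$ is the union of exactly two minimal left ideals: by Remark \ref{disjoint intersection minimal} any further minimal left ideal would be disjoint from both $I$ and $J$ yet contained in $S = I \cup J$, which is absurd. By Theorem \ref{two minimal} this would force $\mathcal{I}n(S)$ to be disconnected, contradicting the standing hypothesis. Hence at least one of $I, J$ fails to be minimal, the construction of the previous paragraph applies, and $diam(\mathcal{I}n(S)) \leq 3$. I expect the only delicate points to be the bookkeeping in the trivial-intersection case (especially when $S$ has a zero, where $I \cap J$ may equal $\{0\}$ rather than $\varnothing$) and the clean invocation of Theorem \ref{two minimal} to eliminate the both-minimal subcase.
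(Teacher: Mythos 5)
Your proof is correct and follows essentially the same route as the paper: the same three-way case split on $I\cup J$ and $I\cap J$, and the same key observation that $A\cup J\neq S$ for $A\subsetneq I$ when $I\cap J$ is trivial. The only difference is cosmetic: where you invoke Theorem~\ref{two minimal} to guarantee that one of $I,J$ is non-minimal, the paper simply uses connectedness to pick a neighbour $I_k$ of $I_1$ and treats the two subcases $I_k\subset I_1$ and $I_1\subset I_k$ directly; both are valid and non-circular.
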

	\begin{proof}
		Let $I_1, I_2$ be two nontrivial left ideals of $S$. If $I_1 \sim I_2$ then $d(I_1, I_2)$ = 1. If $I_1 \nsim I_2$ then in the following cases we show that $d(I_1, I_2) $$\leq 3$.
		
		\noindent\textbf{Case 1.} $I_1 \cup I_2 \neq S$. Then $I_1 \sim (I_1 \cup I_2) \sim I_2$ so that $d(I_1, I_2)$ = 2.
		
		\noindent\textbf{Case 2.} $I_1 \cup I_2 = S$. If $I_1 \cap I_2 \neq \emptyset$ then $I_1 \sim (I_1 \cap I_2) \sim I_2$  gives  $d(I_1, I_2) $= 2. We may now suppose that $I_1 \cap I_2 = \emptyset$. Since $\mathcal{I}n(S)$ is a connected graph, there exists a nontrivial left ideal $I_k$ of $S$ such that either $I_k \subset I_1$ or $I_1 \subset I_k$.  If  $I_k \subset I_1$ then there exists $x \in I_1$ but $x \notin I_k $. If $I_k \cup I_2 = S$ then $x \in I_2$. Thus, we  get $I_1 \cap I_2 \neq \emptyset$, a contradiction. Consequently, $I_2 \cup I_k \neq S$. Further, we get a path $I_1 \sim I_k \sim (I_2 \cup I_k) \sim I_2$ of length 3. Thus, $d(I_1, I_2)$ = 3. Now if $I_1 \subset I_k$ then note that  $I_k \cap I_2 \neq \emptyset$. Consequently, we get $I_1 \sim I_k \sim (I_k \cap I_2) \sim I_2$  between $I_1$ and $I_2$. Hence, $diam(\mathcal{I}n(S))$ $\leq$ $3$.
	\end{proof}
	%%%%%%%%%%%%%%%%%%%%%%%%%%%%%%%%%%%%%%%%%%%%%
	\begin{lemma}\label{same union not adjacent}
		Let $\mathcal{I}$ and $\mathcal{I}'$ be two distinct left ideals of $S$ such that  both  $\mathcal{I}$ and $\mathcal{I}'$ are the union of $k$ minimal left ideals of $S$. Then $\mathcal{I} \nsim \mathcal{I}'$ in  $\mathcal{I}n(S)$.
	\end{lemma}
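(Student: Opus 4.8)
The plan is to argue by contradiction and rule out both possible proper containments, since adjacency in $\mathcal{I}n(S)$ means exactly that one of the two ideals is a proper subset of the other. So it suffices to show that neither $\mathcal{I} \subset \mathcal{I}'$ nor $\mathcal{I}' \subset \mathcal{I}$ can hold.

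First I would fix notation: write $\mathcal{I} = M_1 \cup \cdots \cup M_k$ and $\mathcal{I}' = N_1 \cup \cdots \cup N_k$, where the $M_i$ and the $N_j$ are distinct minimal left ideals of $S$. By Remark~\ref{disjoint intersection minimal} the $M_i$ are pairwise disjoint, and likewise the $N_j$; this also guarantees that such a representation involves genuinely $k$ distinct minimal left ideals in each case.

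The heart of the argument is the following observation, which I would isolate first: if a minimal left ideal $M$ satisfies $M \subseteq \mathcal{I}'$, then $M = N_j$ for some $j$. To prove it, pick $x \in M$; then $x$ lies in some $N_j$, and since $N_j$ is a left ideal we have $S^1 x \subseteq N_j$. As $S^1 x$ is a nonempty left ideal contained in the minimal left ideal $M$, minimality of $M$ forces $S^1 x = M$, whence $M \subseteq N_j$; minimality of $N_j$ then gives $M = N_j$. With this in hand, assume toward a contradiction that $\mathcal{I} \subseteq \mathcal{I}'$. Then each $M_i \subseteq \mathcal{I}'$, so each $M_i$ equals some $N_{j(i)}$. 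Because the $M_i$ are distinct, this selects $k$ distinct members of the $k$-element family $\{N_1, \dots, N_k\}$, which therefore exhausts it; hence $\{M_1, \dots, M_k\} = \{N_1, \dots, N_k\}$ and so $\mathcal{I} = \mathcal{I}'$, contradicting the hypothesis that $\mathcal{I}$ and $\mathcal{I}'$ are distinct. Interchanging the roles of $\mathcal{I}$ and $\mathcal{I}'$ rules out $\mathcal{I}' \subseteq \mathcal{I}$ by the same reasoning, and we conclude $\mathcal{I} \nsim \mathcal{I}'$.

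The only nonroutine point is the isolated observation that a minimal left ideal contained in a union of minimal left ideals must coincide with one of them; I expect this to be the main obstacle, though it follows directly from the standard properties of minimal left ideals recalled in Section~2 (the "either contained in or disjoint from" dichotomy together with minimality). The finishing move is then a pure counting argument comparing two families of size $k$.
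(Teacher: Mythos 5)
Your proof is correct and follows essentially the same route as the paper: assume a proper containment, use the pairwise disjointness of distinct minimal left ideals (Remark~\ref{disjoint intersection minimal}) to force the two $k$-element families of minimal left ideals to coincide, and conclude $\mathcal{I}=\mathcal{I}'$, a contradiction. The paper compresses this into one sentence citing the remark, whereas you make explicit the key intermediate step (a minimal left ideal contained in a union of minimal left ideals must equal one of them) and the counting argument — a more detailed but substantively identical argument.
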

	\begin{proof}
		On contrary suppose that $\mathcal{I} \sim \mathcal{I}'$. Without loss of generality assume that $\mathcal{I} \subset \mathcal{I}'$. Since $\mathcal{I}$ and $\mathcal{I}'$ are the union of $k$ minimal left ideals of $S$ and $\mathcal{I} \subset \mathcal{I}'$, by Remark \ref{disjoint intersection minimal} we get $\mathcal{I} = \mathcal{I}'$, a contradiction.
	\end{proof}

%Now we classify the semigroups for which  $diam(\mathcal{I}n(S))$ is  $3$. 
Let us denote ${\rm Min}(S)$ by the set of all minimal left ideals of $S$. By a nontrivial left ideal $I_{{i_1}{i_2}\cdots{i_k}}$, we mean $I_{i_1} \cup I_{i_2} \cup \cdots \cup I_{i_k}$, where $I_{i_1}, I_{i_2}, \cdots, I_{i_k}$ are minimal left ideals of $S$. 	

	\begin{lemma}\label{length4 or 5}
		If $\mathcal{I}n(S)$ has a cycle of length $4$ or $5$ then $\mathcal{I}n(S)$ has a triangle.
	\end{lemma}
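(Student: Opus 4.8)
The plan is to exploit that adjacency in $\mathcal{I}n(S)$ is exactly comparability under inclusion, so that transitivity of $\subset$ becomes the main engine for producing triangles, supplemented by the fact (already used for $I_1\cap I_2$ in Theorem~\ref{diameter3}) that a nonempty intersection of two left ideals is again a left ideal. The decisive elementary observation I would record first is this: if $A\sim B\sim C$ is a path of length two whose two inclusions point the same way, say $A\subset B\subset C$ (or $A\supset B\supset C$), then $A\subset C$ (resp. $C\subset A$), so $A\sim C$ and $\{A,B,C\}$ is a triangle. Accordingly, I orient each edge of the given cycle by the direction of the inclusion it represents; if any two consecutive edges agree in direction, the observation yields a triangle at once. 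It therefore suffices to treat cycles whose orientation strictly alternates around the cycle.

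For a cycle of length $5$ this alternating situation cannot arise, since an orientation that alternates around a closed walk is possible only when the walk has even length. Hence in every $5$-cycle two consecutive edges agree in direction, and the first step already delivers a triangle. This disposes of the length-$5$ case entirely by a purely formal argument.

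The remaining, genuinely non-formal case is an alternating $4$-cycle, say $I_1\subset I_2\supset I_3\subset I_4\supset I_1$; here transitivity alone is powerless, and one must invoke the semigroup structure (indeed $C_4$ is itself the comparability graph of the poset $\{I_1,I_3\}<\{I_2,I_4\}$, which has no triangle). From $I_1\subseteq I_2$ and $I_1\subseteq I_4$ I get $\emptyset\neq I_1\subseteq I_2\cap I_4\subsetneq S$, so $J:=I_2\cap I_4$ is a nontrivial left ideal and hence a vertex of $\mathcal{I}n(S)$. If $I_2$ and $I_4$ are comparable, combining this with $I_1\subset I_2$ and $I_3\subset I_4$ already gives a chain of three ideals and thus a triangle; so I may assume $I_2\nsim I_4$, which forces $J\subsetneq I_2$ and $J\subsetneq I_4$. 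Finally I compare $I_1$ with $J$: since $I_1\subseteq J$, either $I_1\subsetneq J$, giving the chain $I_1\subsetneq J\subsetneq I_2$, or $I_1=J$, in which case $I_3\subseteq I_2\cap I_4=I_1$ gives the chain $I_3\subsetneq I_1\subsetneq I_2$. In either case three pairwise comparable (hence pairwise adjacent) vertices appear, so $\mathcal{I}n(S)$ contains a triangle.

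I expect the alternating $4$-cycle to be the only real obstacle. The work there is to \emph{manufacture} the extra vertex $J=I_2\cap I_4$ from the ideal structure, to check that it is an honest vertex (nonempty because it contains the nontrivial ideal $I_1$, and proper because $J\subseteq I_2\subsetneq S$), and then to place it correctly in the inclusion order by ruling out the degenerate coincidences $J=I_2$, $J=I_4$, and $J=I_1$. Once these are handled, the triangle emerges as a short chain in the poset of left ideals.
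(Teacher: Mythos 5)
Your proof is correct and follows essentially the same route as the paper: the length-$5$ case is settled by the same parity/transitivity observation (two consecutive inclusions around an odd cycle must point the same way, yielding a chain and hence a triangle), and the length-$4$ case by reducing to the alternating configuration and inserting an auxiliary vertex built from the ideal structure. The only difference is cosmetic: the paper hedges between the two candidates $I_1\cup I_3$ and $I_2\cap I_4$ to dodge degenerate coincidences, whereas you work with $I_2\cap I_4$ alone and absorb the coincidence $I_2\cap I_4=I_1$ into the chain $I_3\subsetneq I_1\subsetneq I_2$ --- a slightly tidier bookkeeping of the same idea.
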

	\begin{proof}
		Suppose first that $\mathcal{I}n(S)$ has a cycle of length 5 such that $C : I_1 \sim I_2 \sim I_3 \sim I_4 \sim I_5 \sim I_1$. From the adjacency of ideals in $C$, note that there exists a chain $I_i \subset I_j \subset I_k$ in $S$. Thus $\mathcal{I}n(S)$ has a triangle.
		
		Now we suppose that there exists a cycle $C : I_1 \sim I_2 \sim I_3 \sim I_4 \sim I_1$  of length 4 in $\mathcal{I}n(S)$. Assume that $I_1 \nsim I_3$ and $I_2 \nsim I_4$. Since $I_1 \sim I_2$ we have either $I_1 \subset I_2$ or $I_2 \subset I_1$. If $I_1 \subset I_2$ then $I_3 \subset I_2$ and $I_3 \subset I_4$. It follows that $I_1 \cup I_3 \subseteq I_2$ and $I_3 \subseteq I_2 \cap I_4$. Consequently, we obtain either $I_2 \sim (I_1 \cup I_3) \sim I_3 \sim I_2$ or $I_2 \sim (I_2 \cap I_4) \sim I_3 \sim I_2$. Thus, $\mathcal{I}n(S)$ has a triangle. If $I_2 \subset I_1$ then $I_2 \subset I_3$ and $I_4 \subset I_3$. It follows that $I_2 \subseteq I_1 \cap I_3$ and  $I_2 \cup I_4 \subseteq I_3$. Further, we get a cycle either $I_2 \sim I_1 \sim (I_1 \cap I_3) \sim I_2$ or $I_2 \sim (I_2 \cup I_4) \sim I_3 \sim I_2$. Hence, we have the result.
	\end{proof}
	
	In the following theorem we determine the girth of $\mathcal{I}n(S)$.
\begin{theorem}
For a semigroup $S$, we have $g(\mathcal{I}n(S)) \in \{3, 6, \infty \}$.
\end{theorem}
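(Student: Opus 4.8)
The plan is to split on the length of a shortest cycle. If $\mathcal{I}n(S)$ is acyclic then $g(\mathcal{I}n(S)) = \infty$, and if it contains a triangle then $g(\mathcal{I}n(S)) = 3$; so the entire content is to show that when $\mathcal{I}n(S)$ has a cycle but no triangle its girth is exactly $6$. First I would record two facts used repeatedly: the union of two left ideals is again a left ideal (stated in Section 2), and the intersection $I \cap J$ of two left ideals, when nonempty, is a left ideal since $S(I\cap J) \subseteq SI \cap SJ \subseteq I \cap J$. Whenever such a union or intersection is proper and nonempty it is a vertex of $\mathcal{I}n(S)$. Since Lemma \ref{length4 or 5} turns any cycle of length $4$ or $5$ into a triangle, a triangle-free $\mathcal{I}n(S)$ has girth at least $6$, so the real task is to rule out girth $\geq 7$.

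Assume for contradiction that the girth is $g \geq 7$ and fix a shortest cycle $C : J_1 \sim J_2 \sim \cdots \sim J_g \sim J_1$. Being shortest, $C$ is chordless, for any chord would split $C$ into two strictly shorter cycles. Consequently no three consecutive vertices can form a chain $J_{i-1} \subset J_i \subset J_{i+1}$ (nor its reverse), since transitivity of $\subset$ would produce the chord $J_{i-1} \sim J_{i+1}$. Hence every vertex of $C$ is a strict local maximum or a strict local minimum, and these must alternate around $C$; therefore $g$ is even and $g \geq 8$. I relabel so that $J_2, J_4, \dots$ are the maxima and $J_1, J_3, \dots$ the minima.

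The decisive step is to extract from this length-$\geq 8$ cycle either a triangle or a $6$-cycle. I would examine two neighbouring maxima $J_4, J_6$ sharing the minimum $J_5$. If $J_4 \cup J_6 \neq S$, then $J_4 \cup J_6$ is a vertex strictly above both, and $J_4 \sim J_5 \sim J_6 \sim (J_4 \cup J_6) \sim J_4$ is a $4$-cycle, which by Lemma \ref{length4 or 5} yields a triangle, contradicting $g \geq 7$. The hard case is $J_4 \cup J_6 = S$. Here I would bring in the third maximum $J_2$ and study $J_2 \cap J_6$: since $J_2 \subseteq S = J_4 \cup J_6$, emptiness of $J_2 \cap J_6$ would force $J_2 \subseteq J_4$, i.e. the chord $J_2 \sim J_4$ (these lie at cyclic distance two, so are non-adjacent in the chordless $C$), which is impossible. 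Thus $J_2 \cap J_6$ is a proper nonempty left ideal lying strictly below the incomparable pair $J_2, J_6$, and chordlessness together with incomparability of non-consecutive vertices guarantees it differs from $J_3, J_4, J_5$. Then $J_2 \sim (J_2 \cap J_6) \sim J_6 \sim J_5 \sim J_4 \sim J_3 \sim J_2$ is a genuine $6$-cycle, strictly shorter than $g \geq 8$, contradicting minimality of $C$.

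In both cases we reach a contradiction, so $g \leq 6$; since Lemma \ref{length4 or 5} already excludes girth $4$ and $5$, any finite girth lies in $\{3,6\}$, whence $g(\mathcal{I}n(S)) \in \{3, 6, \infty\}$. I expect the main obstacle to be precisely the case $J_4 \cup J_6 = S$: there the naive one-vertex shortcuts (a union sitting above, or an intersection sitting below a single adjacent pair) collapse, and one must instead combine a third maximum with the relation $J_4 \cup J_6 = S$ to manufacture a shorter cycle. Confirming that the six vertices of that cycle are genuinely distinct, using chordlessness and the incomparability of vertices at cyclic distance $\geq 2$, is the point demanding the most care.
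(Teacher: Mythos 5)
Your proof is correct, but it takes a genuinely different route from the paper's. The paper argues by cases on the number $n$ of minimal left ideals of $S$ ($n=0,1,2,3$ and $n\geq 4$), determining the girth explicitly in each case; most of its effort goes into the $n=2$ case, where the entire vertex set must be pinned down to exclude short cycles. You never invoke minimal left ideals. Instead you take a shortest cycle, note that chordlessness forbids three-term chains so that strict local maxima and minima alternate (forcing even length, hence length at least $8$ once Lemma \ref{length4 or 5} excludes girth $4$ and $5$), and then shortcut the cycle: either the union $J_4\cup J_6$ is proper and yields a $4$-cycle, or $J_4\cup J_6=S$ forces $J_2\cap J_6$ to be a nonempty proper left ideal and yields a $6$-cycle. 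The distinctness checks you flag do all go through, precisely because any coincidence of $J_2\cap J_6$ with $J_3$, $J_4$ or $J_5$ would create a chord, and your two closure facts (unions of left ideals, and nonempty intersections of left ideals, are left ideals) are exactly the ones the paper itself uses elsewhere. What the paper's route buys is finer information, namely which semigroups realize each value of the girth. What your route buys is brevity and generality: it uses only that the vertices form a family of proper subsets closed under pairwise unions and nonempty pairwise intersections, so the same argument applies verbatim to the inclusion ideal graph of a ring or the subspace inclusion graph of a vector space, and it requires no hypothesis on the existence of minimal left ideals.
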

\begin{proof}
If $\mathcal{I}n(S)$ is disconnected or a tree, then clearly $g(\mathcal{I}n(S)) = \infty$. Suppose that $S$ has $n$ minimal left ideals. Now we prove the result through following cases.

\noindent\textbf{Case 1.} $n = 0$. If $S$ has no nontrivial left ideal then there is nothing to prove. Otherwise there exists a chain of nontrivial left ideals of $S$ such that $I_1 \supset I_2 \supset \cdots \supset I_k \supset \cdots$. Thus, $g(\mathcal{I}n(S)) = 3$.

\noindent\textbf{Case 2.} $n = 1$. Suppose that $I_1$ is the only minimal left ideal of $S$. Since $I_1$ is unique minimal left ideal then it is contained in all other nontrivial left ideals of $S$. If any two non minimal left ideals are adjacent, then $g(\mathcal{I}n(S)) = 3$. Otherwise, being a star graph, $g(\mathcal{I}n(S)) = \infty$.

\noindent\textbf{Case 3.} $n = 2$. Let $I_1, I_2$ be two minimal left ideals of $S$. If $I_1 \cup I_2 = S$ then by Theorem \ref{disconnected} and Corollary \ref{null graph}, $g(\mathcal{I}n(S)) = \infty$. If $I_1 \cup I_2 \neq S$, then $J = I_1 \cup I_2$ is a nontrivial left ideal of $S$. If $S$ has only these three, namely  $I_1, I_2$ and $J$, left ideals then we obtain $I_1 \sim J \sim I_2$. Then $g(\mathcal{I}n(S)) = \infty$. Now suppose that  $S$ has a nontrivial left ideal $K$ other than $I_1, I_2$ and $J$. Now we have the following subcases.

\textbf{Subcase 1.} Both $I_1, I_2$ contained in $K$. Then $I_1 \cup I_2 = J \subset K$. Consequently, $I_1 \sim J \sim K \sim I_1$ so that  $g(\mathcal{I}n(S)) = 3$.

\textbf{Subcase 2.} Either $I_1 \subset K$ or $I_2 \subset K$. Without loss of generality, assume that $I_1 \subset K$. If there exists a nontrivial left ideal $K'$ of $S$ such that $K \subset K'$ then $I_1 \sim K \sim K' \sim I_1$ follows that $g(\mathcal{I}n(S)) = 3$. Otherwise, $K$ is a maximal left ideal of $S$. Consequently, $I_2 \cup K = S$. If $J$ is not maximal, then by Subcase 1 we get $g(\mathcal{I}n(S)) = 3$. We may now suppose that $J$ is also a maximal left ideal of $S$. Now we claim that $V(\mathcal{I}n(S)) = \{I_1, I_2, J, K \}$. Let if possible $J' \in V(\mathcal{I}n(S))$ but $J' \notin \{I_1, I_2, J, K \}$. Since $J$ is maximal, for any $a \in K \setminus I_1$, we have $Sa = K$. Since $J'$ is a nontrivial left ideal of $S$, there exists an element $b \in J'$ so that $b$ is either in $I_2$ or in $K$. If $b \notin I_2$, then $b \in K$. Consequently, $K \subset J'$, a contradiction to the maximality of $K$. Now suppose that $b \in I_2$. By the minimality of $I_2$, we obtain $I_2 \subset J'$. Since $I_2 \neq J'$, there exists an element $c \in J'$ but $c \notin I_2$. Consequently, $I_2 \cup K = S$. Thus $c \in K$ so that $K \subset J'$. It follows that $I_2 \cup K = S \subset J'$ and so $J' = S$. Thus, we get  $V(\mathcal{I}n(S)) = \{I_1, I_2, J, K \}$ such that $K \sim I_1 \sim J \sim I_2$. Therefore, $g(\mathcal{I}n(S)) = \infty$. 

\noindent\textbf{Case 4.} $n =3$. Let $I_1, I_2, I_3$ be the minimal left ideals of $S$. If $I_1 \cup I_2 \cup I_3 \neq S$, then we have $I_1 \subset (I_1 \cup I_2) \subset (I_1 \cup I_2 \cup I_3)$. It follows that, $g(\mathcal{I}n(S)) = 3$. Further, suppose that $I_1 \cup I_2 \cup I_3 = S$. Then  all the nontrivial left ideals of $S$ are $I_1, I_2, I_3, I_1 \cup I_2, I_1 \cup I_3$ and $I_2 \cup I_3$. Infact $\mathcal{I}n(S) \cong C_6$. Thus, $g(\mathcal{I}n(S)) = 6$. 

\noindent\textbf{Case 5.}  $n \geq 4$. Then for minimal left ideals $I_1, I_2$ and $I_3$, we get a triangle  
 $I_1 \sim (I_1 \cup I_2) \sim (I_1 \cup I_2 \cup I_3) \sim I_1$ so that  $g(\mathcal{I}n(S)) = 3$.
 
Hence, from above cases we have  $g(\mathcal{I}n(S)) \in \{3, 6, \infty \}$.
\end{proof}

	\begin{theorem}\label{perfect theorem finite no of left ideals}
		Let $S$ be a semigroup with finite number of left ideals. Then the graph $\mathcal{I}n(S)$ is perfect.
	\end{theorem}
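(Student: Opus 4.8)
The plan is to recognise $\mathcal{I}n(S)$ as the comparability graph of the partially ordered set of nontrivial left ideals of $S$ ordered by inclusion, and then to invoke the Strong Perfect Graph Theorem (Theorem \ref{strongperfecttheorem}). Since $S$ has only finitely many left ideals, $\mathcal{I}n(S)$ is finite, so it suffices to show that $\mathcal{I}n(S)$ contains neither a hole nor an antihole of odd length at least $5$. The only structural input I would use is the transitivity of $\subset$ together with the defining fact that two distinct left ideals are adjacent precisely when they are comparable; notably, the semigroup structure beyond ``inclusion is a partial order'' plays no role.

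To rule out odd holes, I would argue by contradiction. Suppose $I_1 \sim I_2 \sim \cdots \sim I_n \sim I_1$ is a chordless cycle with $n \geq 5$ odd. For any three consecutive vertices $I_{i-1}, I_i, I_{i+1}$ (indices mod $n$), being chordless forces $I_{i-1} \nsim I_{i+1}$, and transitivity of $\subset$ then rules out both chains $I_{i-1} \subset I_i \subset I_{i+1}$ and $I_{i+1} \subset I_i \subset I_{i-1}$, since either would create the forbidden chord $I_{i-1} \sim I_{i+1}$. Hence at every vertex $I_i$ the two cycle-neighbours are either both contained in $I_i$ (a peak) or both contain $I_i$ (a valley). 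A peak must be followed by a valley and conversely, so peaks and valleys alternate around the cycle, which is impossible for odd $n$. Thus $\mathcal{I}n(S)$ has no odd hole of length at least $5$.

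For antiholes, suppose $\overline{C_n}$ is an induced subgraph with $n \geq 5$ odd, realised by left ideals $I_1, \ldots, I_n$ in which $I_i$ and $I_j$ are comparable exactly when $i,j$ are non-consecutive modulo $n$. Any independent set of $\overline{C_n}$ corresponds to a clique of $C_n$, and since $C_n$ is triangle-free such a set has size at most $2$; equivalently, no three of the $I_i$ are pairwise incomparable, so this sub-poset has width $2$. By Dilworth's theorem its elements split into two chains, each of which is a clique of $\mathcal{I}n(S)$; therefore $\overline{C_n}$ is a union of two cliques, forcing its complement $C_n$ to be bipartite. But $C_n$ is not bipartite for odd $n$, a contradiction. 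Hence $\mathcal{I}n(S)$ has no odd antihole of length at least $5$, and by Theorem \ref{strongperfecttheorem} the graph $\mathcal{I}n(S)$ is perfect.

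The comparability-graph observation and the hole argument are routine; the step I expect to be the crux is the antihole case, where the key realisation is that the incomparability (complement) graph of a width-$2$ subposet is bipartite, so an odd antihole cannot embed. I would also double-check the index bookkeeping in the hole argument, ensuring that for $n \geq 5$ the vertices $I_{i-1}$ and $I_{i+1}$ really are non-consecutive (so that the induced chord is genuinely absent), and verify that the $n=5$ case is covered consistently by both arguments, since $\overline{C_5} \cong C_5$.
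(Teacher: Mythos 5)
Your proof is correct, and it relies on the same two structural facts as the paper -- that $\mathcal{I}n(S)$ is the comparability graph of the inclusion poset of nontrivial left ideals, and the Strong Perfect Graph Theorem -- but the two halves compare differently. Your odd-hole argument (each vertex of a chordless cycle is a ``peak'' or ``valley'' by transitivity, these must alternate, and an odd cycle admits no such $2$-colouring) is essentially the paper's argument: the authors fix $I_1\subset I_2$, propagate $I_{2i-1}\subset I_{2i}$ around the cycle, and obtain the same parity contradiction at the wrap-around. Your antihole argument, however, is genuinely different. The paper chases containments directly inside the complement cycle ($I_1\subset I_3$ forces $I_1\subset I_j$ for the non-neighbours of $I_1$, then $I_3\subset I_5$, etc., until the forbidden adjacency $I_1\sim I_{2n+1}$ appears), whereas you observe that the vertex set of an induced $\overline{C_n}$ is a subposet of width at most $2$ (independent sets of $\overline{C_n}$ are cliques of the triangle-free $C_n$), apply Dilworth's theorem to split it into two chains, i.e.\ two cliques, and conclude that $C_n$ would be bipartite -- impossible for odd $n$. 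Your route buys a shorter, index-free argument at the cost of importing Dilworth; the paper's is self-contained but requires careful bookkeeping. It is worth noting that the comparability-graph observation also gives a proof avoiding SPGT altogether: colouring each ideal by the length of a longest chain below it shows $\chi=\omega$ for every induced subgraph directly.
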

	\begin{proof}
		In view of Theorem \ref{strongperfecttheorem}, we show that $\mathcal{I}n(S)$ does not contain a hole or an antihole of odd length at least five. On contrary, assume that $\mathcal{I}n(S)$ contains a hole $C: I_1 \sim I_2 \sim I_3 \sim \cdots \sim I_{2n+1} \sim I_1$, where $n \geq 2$. Since $I_1 \sim I_2$, we have either $I_1 \subset I_2$ or $I_2 \subset I_1$. Without loss of generality, suppose that $I_1 \subset I_2$. Then clearly $I_3 \subset I_2$. Otherwise $I_1 \sim I_3$, a contradiction. Further for $2 \leq i \leq n$ note that $I_{2i-1} \subset I_{2i}$. Since $I_{2n} \sim I_{2n+1}$, we have either $I_{2n} \subset I_{2n+1}$ or $I_{2n+1} \subset I_{2n}$. But $I_{2n} \subset I_{2n+1}$ is not possible because $I_{2n-1} \subset I_{2n}$ follows that $I_{2n-1} \sim I_{2n+1}$, a contradiction. Also $I_{2n+1} \sim I_1$ will give $I_{2n+1} \subset I_1$. Consequently, from $I_1 \subset I_2$ we obtain $I_{2n+1} \subset I_2$. Thus, $I_{2n+1} \sim I_2$, a contradiction.
		
		Next, suppose that $\mathcal{I}n(S)$ contains an antihole $C$ of length at least five. Then $\overline{C}$: $I_1 \sim I_2 \sim I_3 \sim \cdots \sim  I_{2n+1} \sim I_1$, where $n \geq 2$, is a hole in $\overline{\mathcal{I}n(S)}$. Since $I_1 \sim I_3$ in $\mathcal{I}n(S)$, we have either $I_1 \subset I_3$ or $I_3 \subset I_1$. Without loss of generality, assume that $I_1 \subset I_3$. Then, for $4 \leq j \leq 2n$, note that $I_1 \subset I_j$. Moreover, $I_2 \subset I_j$ for $4 \leq j \leq 2n+1$. Since $I_3 \sim I_5$ in $\mathcal{I}n(S)$ we have either $I_3 \subset  I_5$ or $I_5 \subset I_3$. If $I_5 \subset I_3$ then $I_2 \subset I_3$ as $I_2 \subset I_5$. Thus, $I_2 \sim I_3$ in $\mathcal{I}n(S)$ which is not possible. Consequently, $I_3 \subset I_5$. Also, it is easy to observe that $I_3 \subset I_{2n+1}$. Since $I_1 \subset I_3$ we have $I_1 \subset I_{2n+1}$ so that $I_1 \sim I_{2n+1}$ in $\mathcal{I}n(S)$, a contradiction. Thus, $\mathcal{I}n(S)$ does not contain an antihole of length at least five.
	\end{proof}
	
	Let $S$ be a semigroup with $n$ minimal left ideals. Now we classify the semigroups for which $\omega(\mathcal{I}n(S))$ is $n$. 
	\begin{lemma}\label{cliqueinclusion}
Let $S$ be a semigroup such that $S = I_{{i_1}{i_2}\cdots{i_n}}$. Then $\omega(\mathcal{I}n(S)) = n-1$.
\end{lemma}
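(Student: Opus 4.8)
The plan is to translate the problem into a purely combinatorial statement about chains of subsets. First I would show that, under the hypothesis $S = I_{i_1 i_2 \cdots i_n}$, every nontrivial left ideal $A$ of $S$ is exactly the union of the minimal left ideals it contains. Indeed, for each minimal left ideal $I_{i_j}$ we have either $I_{i_j} \subseteq A$ or $I_{i_j} \cap A = \emptyset$; since $A \subseteq S = I_{i_1} \cup \cdots \cup I_{i_n}$, whenever $A$ meets $I_{i_j}$ it must contain it, whence $A = \bigcup\{ I_{i_j} : I_{i_j} \subseteq A \}$. By Remark \ref{disjoint intersection minimal} the minimal left ideals are pairwise disjoint and nonempty, so the index set $T(A) = \{ j : I_{i_j} \subseteq A \}$ is a nonempty subset of $\{1, \dots, n\}$, proper because $A \neq S$. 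Thus $A \mapsto T(A)$ is an inclusion-preserving bijection between the nontrivial left ideals of $S$ and the nonempty proper subsets of an $n$-element set.

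Next I would record that a clique in $\mathcal{I}n(S)$ is precisely a set of pairwise inclusion-comparable nontrivial left ideals, that is, a chain $J_1 \subsetneq J_2 \subsetneq \cdots \subsetneq J_m$. Writing $c(J) = |T(J)|$ for the number of minimal left ideals contained in $J$, each strict inclusion $J_\ell \subsetneq J_{\ell+1}$ forces $c(J_\ell) < c(J_{\ell+1})$, because each ideal equals the union of the minimal ideals it contains. Hence $1 \le c(J_1) < c(J_2) < \cdots < c(J_m) \le n-1$, where the last inequality holds since $J_m$ is nontrivial and therefore omits at least one minimal left ideal. This gives $m \le n-1$, and so $\omega(\mathcal{I}n(S)) \le n-1$.

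For the matching lower bound I would exhibit the explicit chain $I_{i_1} \subset I_{i_1 i_2} \subset \cdots \subset I_{i_1 i_2 \cdots i_{n-1}}$ of nontrivial left ideals; each inclusion is strict because the minimal ideals are nonempty and pairwise disjoint, and the final term is proper since it excludes $I_{i_n}$. These $n-1$ ideals are pairwise comparable, hence pairwise adjacent in $\mathcal{I}n(S)$, so $\omega(\mathcal{I}n(S)) \ge n-1$. Combining the two bounds yields $\omega(\mathcal{I}n(S)) = n-1$.

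I expect the only genuine content to lie in the first step, namely verifying that an arbitrary nontrivial left ideal decomposes uniquely as a union of minimal left ideals, so that the counting function $c(\cdot)$ is well defined and strictly monotone along chains. Once this is in place, the remainder is the standard observation that cliques in an inclusion graph are chains, together with the elementary fact that a chain of subsets whose sizes lie strictly between $0$ and $n$ can have length at most $n-1$.
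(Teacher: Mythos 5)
Your proof is correct and follows essentially the same route as the paper: the explicit chain $I_{i_1}\subset I_{i_1i_2}\subset\cdots\subset I_{i_1\cdots i_{n-1}}$ gives the lower bound, and the upper bound comes from the fact that two distinct ideals built from the same number of minimal left ideals cannot be comparable (the paper's Lemma~\ref{same union not adjacent} plus pigeonhole, which is just your strict monotonicity of $c(\cdot)$ along a chain). The only difference is that you make explicit the preliminary step that every nontrivial left ideal decomposes as the union of the minimal left ideals it contains, which the paper uses implicitly; this is a welcome clarification rather than a new idea.
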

\begin{proof}
Since we have $n$ minimal left ideals, namely $I_{i_1}, I_{i_2}, \ldots, I_{i_n}$. Note that $\mathcal{C} = \{I_{i_1}, I_{{i_1}{i_2}}, \ldots, I_{{i_1}{i_2}\cdots {i_{n-1}}}\}$ be a clique of size $n-1$ in $\mathcal{I}n(S)$. Let $\mathcal{C} \cup \{J\}$ be a clique in $\mathcal{I}n(S)$, where $J= I_{{i_1}{i_2}\cdots{i_{k}}}$ for some $k$, $1 \leq k \leq n-1$. Then $J$ is adjacent with every element of $\mathcal{C}$, a contradiction (see Lemma \ref{same union not adjacent}). Consequently, $\mathcal{C}$ is a maximal clique in $\mathcal{I}n(S)$. Now if $\mathcal{C}'$ be a clique of size $n$ then there exist two nontrivial left ideals in $\mathcal{C}'$ which are union of $k$ minimal left ideals for some $k$, where $1 \leq k \leq n-1$. By Lemma \ref{same union not adjacent}, a contradiction for $\mathcal{C}'$ to be a clique in $\mathcal{I}n(S)$. Hence, $\omega(\mathcal{I}n(S)) = n-1$.
\end{proof}
\begin{theorem}
Let $S$ be a semigroup with $n$ minimal left ideals. Then $\omega(\mathcal{I}n(S)) = n$ if and only if $I_{{i_1}{i_2}\cdots{i_n}}$ is a maximal left ideal. 
\end{theorem}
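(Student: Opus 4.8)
The plan is to reinterpret the clique number combinatorially: since distinct vertices of $\mathcal{I}n(S)$ are adjacent exactly when the corresponding left ideals are comparable under inclusion, a clique is precisely a chain of nontrivial left ideals, and so $\omega(\mathcal{I}n(S))$ equals the number of terms in a longest such chain. Write $U = I_{{i_1}{i_2}\cdots{i_n}} = I_{i_1}\cup\cdots\cup I_{i_n}$ for the union of all minimal left ideals. The ascending sequence $I_{i_1} \subsetneq I_{{i_1}{i_2}} \subsetneq \cdots \subsetneq U$ is a chain, strict by Remark \ref{disjoint intersection minimal}. If $U = S$, this chain has $n-1$ nontrivial terms and Lemma \ref{cliqueinclusion} already gives $\omega(\mathcal{I}n(S)) = n-1 \neq n$; since $S$ is not a (proper) maximal left ideal, both sides of the claimed equivalence fail, so I may assume $U \subsetneq S$. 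Then all $n$ terms above are nontrivial left ideals, giving the lower bound $\omega(\mathcal{I}n(S)) \geq n$.

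For the forward implication I would argue by contrapositive. If $U$ is not maximal (with $U \subsetneq S$), there is a nontrivial left ideal $W$ with $U \subsetneq W$; appending $W$ to the chain above produces a chain of $n+1$ nontrivial left ideals, so $\omega(\mathcal{I}n(S)) \geq n+1 > n$. Hence $\omega(\mathcal{I}n(S)) = n$ forces $U$ to be maximal. This direction is routine.

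The substantial direction is the converse: assuming $U$ is maximal, I must rule out any chain $J_1 \subsetneq J_2 \subsetneq \cdots \subsetneq J_{n+1}$ of nontrivial left ideals. For a nontrivial left ideal $J$ set $M(J) = \{\, i : I_i \subseteq J \,\}$; this is nonempty because every nontrivial left ideal contains a minimal one, and since each minimal $I_i$ satisfies $I_i \subseteq J$ or $I_i \cap J = \emptyset$, the sets grow along the chain: $M(J_1) \subseteq \cdots \subseteq M(J_{n+1}) \subseteq \{1,\ldots,n\}$. Call a step $J_t \subsetneq J_{t+1}$ \emph{flat} if $M(J_t) = M(J_{t+1})$ and \emph{strict} otherwise. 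The heart of the argument, and the main obstacle, is to prove that at most one step can be flat. Here I would invoke Lemma \ref{S minus K is lclass}: maximality of $U$ makes $L := S \setminus U$ a single $\mathcal{L}$-class. At a flat step any $x \in J_{t+1}\setminus J_t$ cannot lie in $U$, for otherwise the minimal ideal through $x$ would already sit inside $J_t$ (since $M(J_t)=M(J_{t+1})$); thus $x \in L$, and because $L$ is an $\mathcal{L}$-class we get $L \subseteq S^1x \subseteq J_{t+1}$, whereas the same reasoning forces $J_t \cap L = \emptyset$. Consequently two flat steps at positions $s < t$ would give both $L \subseteq J_{s+1}\subseteq J_t$ and $J_t \cap L = \emptyset$, a contradiction; so at most one step is flat.

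Finally I would count. The chain has $n$ steps, of which at least $n-1$ are strict, and each strict step enlarges $|M|$ by at least one, so $n \geq |M(J_{n+1})| \geq |M(J_1)| + (n-1) \geq n$. Equality forces exactly one flat step and $M(J_{n+1}) = \{1,\ldots,n\}$, that is $U \subseteq J_{n+1}$; maximality of $U$ then yields $J_{n+1} = U$. But the unique flat step produced some $J_{s+1}$ with $L \subseteq J_{s+1} \subseteq J_{n+1} = U$, which is impossible since $L = S\setminus U$ is nonempty. This contradiction shows $\omega(\mathcal{I}n(S)) \leq n$, and together with the lower bound completes the proof.
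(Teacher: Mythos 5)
Your proof is correct, and for the substantial direction (maximality of $U = I_{i_1}\cup\cdots\cup I_{i_n}$ forces $\omega(\mathcal{I}n(S)) \le n$) it takes a genuinely different route from the paper's. Both arguments start from the same structural fact, namely that Lemma \ref{S minus K is lclass} makes $L = S\setminus U$ a single $\mathcal{L}$-class, so that any left ideal meeting $L$ contains $L$. The paper then splits into cases according to whether $S^1a = S$ for $a \in L$, uses this to classify every vertex of $\mathcal{I}n(S)$ explicitly (each nontrivial left ideal is a union of minimal left ideals, possibly together with $L$), and inspects maximal cliques of each shape; the verification that no clique of size $n+1$ exists is done by exhibiting particular maximal cliques and appealing to Lemma \ref{same union not adjacent}. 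You instead exploit that a clique is exactly a chain, attach to each nontrivial left ideal $J$ the index set $M(J)$ of minimal ideals it contains (nonempty and monotone along a chain, by the facts recorded in the preliminaries), and show that at most one step of a chain can fail to enlarge $M$ --- the unique ``flat'' step being precisely where $L$ is absorbed --- after which a pigeonhole count kills any chain of $n+1$ terms. This buys a uniform argument with no case distinction and no enumeration of the vertex set, and it localizes the use of the $\mathcal{L}$-class fact to a single clean lemma; the paper's approach, in exchange, yields as a by-product an explicit description of all nontrivial left ideals when $U$ is maximal. Your handling of the degenerate case $U=S$ via Lemma \ref{cliqueinclusion} (under the paper's implicit convention that maximal left ideals are proper) and your contrapositive for the forward implication coincide with the paper's.
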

\begin{proof}
Suppose that $\omega(\mathcal{I}n(S)) = n$. Clearly, by Lemma \ref{cliqueinclusion}, we have $S \neq I_{{i_1}{i_2}\cdots{i_n}}$.%Therefore, $S \neq I_{{i_1}{i_2}\cdots{i_n}}$. 
 Let if possible, $I_{{i_1}{i_2}\cdots{i_n}}$ is not a maximal left ideal of $S$. Then there exists a nontrivial left ideal $K$ of $S$ such that $I_{{i_1}{i_2}\cdots{i_n}} \subset K$. Note that  $\mathcal{C} = \{I_{i_1}, I_{{i_1}{i_2}}, \ldots, I_{{i_1}{i_2}\cdots {i_{n-1}}}, I_{{i_1}{i_2}\cdots {i_{n}}}, K\}$ forms a clique of size $n+1$. Consequently, $\omega(\mathcal{I}n(S)) \neq n$, a contradiction. Thus, $I_{{i_1}{i_2}\cdots {i_{n}}}$ is a maximal left ideal of $S$.

Conversely, suppose that $I_{{i_1}{i_2}\cdots {i_{n}}}$ is a maximal left ideal of $S$. Then by Lemma \ref{S minus K is lclass}, $S \setminus I_{{i_1}{i_2}\cdots {i_{n}}}$ is an $\mathcal{L}$-class. Thus, for each $a \in S \setminus I_{{i_1}{i_2}\cdots {i_{n}}}$, we get either $S^{1}a = S$ or $S^{1}a$ is a nontrivial left ideal of $S$. First suppose that $S^{1}a = S$. Therefore, for any nontrivial left ideal $I$ of $S$, note that if for some $a \in S \setminus I_{{i_1}{i_2}\cdots {i_{n}}}$ such that $a \in I$, then $I = S$, a contradiction. Thus, every nontrivial left ideal $I$ of $S$ is either  a minimal left ideal or a  union of minimal left ideals. Consequently, in the similar lines of the proof of Lemma \ref{cliqueinclusion}, we get a clique $\mathcal{C} = \{I_{i_1}, I_{{i_1}{i_2}}, \ldots, I_{{i_1}{i_2}\cdots {i_{n-1}}}, I_{{i_1}{i_2}\cdots {i_{n}}}\}$ of maximum size $n$ so that $\omega(\mathcal{I}n(S)) = n$.  

We may now suppose that for each $a \in S \setminus I_{{i_1}{i_2}\cdots {i_{n}}}$, $S^{1}a = I$ is a nontrivial left ideal of $S$. Assume that $J$ is any nontrivial left ideal of $S$ such that $a \in J$ for some $a \in S \setminus I_{{i_1}{i_2}\cdots {i_{n}}}$. Then $S \setminus I_{{i_1}{i_2}\cdots {i_{n}}} \subset I \subseteq J$. Consequently, all the vertices of $\mathcal{I}n(S)$ are minimal left ideals, union of minimal left ideals and of the form $(S \setminus I_{{i_1}{i_2}\cdots {i_{n}}}) \cup I_{{j_1}{j_2}\cdots{j_k}}$. Note that $I_{i_1} \sim (S \setminus I_{{i_1}{i_2}\cdots {i_{n}}}) \cup I_{{j_1}{j_2}\cdots{j_k}}$ implies $i_1 \in \{j_1, j_2, \ldots, j_k \}$ and $(S \setminus I_{{i_1}{i_2}\cdots {i_{n}}}) \cup I_{{s_1}{s_2}\cdots{s_k}} \sim (S \setminus I_{{i_1}{i_2}\cdots {i_{n}}}) \cup I_{{t_1}{t_2}\cdots{t_p}}$ implies $I_{{s_1}{s_2}\cdots{s_k}} \sim I_{{t_1}{t_2}\cdots{t_p}}$.
Suppose that $K$ is of the form $(S \setminus I_{{i_1}{i_2}\cdots {i_{n}}}) \cup I_{{j_1}{ji_2}\cdots{j_k}}$. Let $\mathcal{C}$ be an arbitrary clique such that $K \in \mathcal{C}$. Note that $\mathcal{C} = \{I_{j_1}, I_{{j_1}{j_2}}, \ldots, I_{{j_1}{j_2}\cdots {j_{k}}}, K, K \cup I_{j_{k+1}}, \ldots, K \cup I_{{j_{k+1}}{j_{k+2}}\cdots{j_{n-k-1}}} \}$ is a clique of size $n$. If $\mathcal{C} \cup \{K'\}$ is a clique of size $n+1$ then either $K' = I_{{j_1}{j_2}\cdots{j_t}}$ or $K' = (S \setminus I_{{i_1}{i_2}\cdots {i_{n}}}) \cup I_{{j_1}{j_2}\cdots{j_s}}$. Then $K'$ is not adjacent with at least one vertex of $\mathcal{C}$, a contradiction of the fact that $\mathcal{C} \cup \{K'\}$ is a clique. Consequently, $\mathcal{C}$ is a maximal clique in $\mathcal{I}n(S)$. %Next, suppose that $I \subset J$. Let $I = S \setminus I_{{i_1}{i_2}\cdots {i_{n}}} \cup I_{{i_1}{i_2}\cdots{i_p}}$ and $\mathcal{C}$ be a arbitrary clique such that $J \in \mathcal{C}$. Then $\mathcal{C} = \{I_{i_1}, I_{{i_1}{i_2}}, \ldots, I_{{i_1}{i_2}\cdots {i_{p}}}, I, I \cup I_{i_{p+1}}, I \cup I_{{i_{p+1}}{i_{p+2}}}, \ldots, J, J \cup I_{i_{k+1}}, \ldots, J \cup I_{{i_{k+1}}{i_{k+2}}\cdots{i_{n-p-1}}} \}$ is a clique of size $n$. If $\mathcal{C} \cup \{J'\}$ is a clique of size $n+1$. Then either $J' = I_{{i_1}{i_2}\cdots{i_t}}$ or $J' = S \setminus I_{{i_1}{i_2}\cdots {i_{n}}} \cup I_{{i_1}{i_2}\cdots{i_s}}$. Since $J' \in \mathcal{C}$, then $J'$ is adjacent with every element of $\mathcal{C}$, a contradiction. Consequently, $\mathcal{C}$ is a maximal clique in $\mathcal{I}n(S)$.
Further, suppose that $\mathcal{C}$ do not contain any nontrivial left ideal of the form $(S \setminus I_{{i_1}{i_2}\cdots {i_{n}}}) \cup I_{{j_1}{j_2}\cdots{j_k}}$. Note that $\mathcal{C} = \{I_{i_1}, I_{{i_1}{i_2}}, \ldots, I_{{i_1}{i_2}\cdots {i_{n-1}}}, I_{{i_1}{i_2}\cdots {i_{n}}}\}$ is a maximal clique of size $n$. Now suppose that $\mathcal{C}'$ is an arbitrary clique of size at least $n+1$. Then by the adjacency of vertices in $\mathcal{I}n(S)$ mentioned above and in Lemma \ref{same union not adjacent}, there exist at least two vertices $U$ and $U'$ such that $U \nsim U'$. Thus, $\omega(\mathcal{I}n(S)) = n$ and the proof is complete.
\end{proof}

\begin{theorem}
For the graph $\mathcal{I}n(S)$, we have the following results:
\begin{enumerate}
\item[{\rm(i)}] If $\mathcal{I}n(S)$ is a planar graph then $| {\rm Min}(S) | \leq 4$.
\item [{\rm(ii)}] Let $S$ be the union of $n$ minimal left ideals. Then  $\mathcal{I}n(S)$ is a planar graph if and only if $n \leq 4$.  
\end{enumerate}
\end{theorem}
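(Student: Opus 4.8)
The plan is to treat the two directions of the obstruction separately: an explicit $K_{3,3}$ (together with Kuratowski's theorem) rules out planarity as soon as there are at least five minimal left ideals, while an explicit embedding establishes planarity for $n\le 4$.

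First I would prove non-planarity whenever $S$ has at least five minimal left ideals; this single argument simultaneously yields the contrapositive of (i) and the forward implication of (ii). Fix distinct minimal left ideals $I_1,\dots,I_5$. Since distinct minimal left ideals are pairwise disjoint (Remark \ref{disjoint intersection minimal}) and a finite union of left ideals is again a left ideal, any union of at most four of the $I_j$ is a nonempty left ideal omitting some nonempty minimal left ideal, hence a nontrivial left ideal and so a vertex of $\mathcal{I}n(S)$. Crucially this conclusion does not depend on the rest of $S$, so it applies verbatim to (i), and when $S=I_1\cup\cdots\cup I_n$ with $n\ge 5$ it shows in particular that $I_{1234}$ and $I_{1235}$ are proper. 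Now each of $I_1,I_2,I_3$ is strictly contained in each of $I_{123},\,I_{1234},\,I_{1235}$, so all nine edges are present and $\mathcal{I}n(S)$ contains $K_{3,3}$ as a subgraph (extra edges among these six vertices being irrelevant). By Kuratowski's theorem $\mathcal{I}n(S)$ is non-planar, proving (i) and the converse implication of (ii).

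For the remaining implication of (ii) I would first reduce to a purely combinatorial graph. When $S=I_1\cup\cdots\cup I_n$, Lemma \ref{union minimal} and pairwise disjointness of minimal left ideals give a bijection between $V(\mathcal{I}n(S))$ and the nonempty proper subsets of $\{1,\dots,n\}$ under which adjacency becomes strict inclusion. For $n=1$ there are no vertices, for $n=2$ the graph is two isolated vertices, and for $n=3$ the three singletons and three doubletons form a cycle $C_6$; each is planar. For $n=4$ the graph has $14$ vertices and $36=3\cdot 14-6$ edges, so any planar embedding must be a triangulation. I would exhibit such an embedding via a two-wheel decomposition: the vertices $I_{123},I_1,I_2,I_3,I_{12},I_{13},I_{23}$ form a wheel $W_6$ with hub $I_{123}$ and rim $I_1 - I_{12} - I_2 - I_{23} - I_3 - I_{13}$, the complementary vertices $I_4,I_{14},I_{24},I_{34},I_{124},I_{134},I_{234}$ form a second $W_6$ with hub $I_4$, and the twelve remaining edges join the two rims as a $6$-cycle plus six pendant edges, a pattern drawable without crossings in the annulus between the two rim-hexagons. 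Equivalently, and as a rigorous certificate, this graph is the $1$-skeleton of the tetrakis hexahedron (the cube with a pyramid erected on each face): its eight degree-$6$ vertices, the singletons and tripletons, induce the cube graph $Q_3$, and each of the six degree-$4$ doubletons is joined to the four vertices of exactly one cube face. As the tetrakis hexahedron is a convex polyhedron, Steinitz's theorem gives that $\mathcal{I}n(S)$ is planar for $n=4$.

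The main obstacle is precisely the case $n=4$, since here the natural forbidden-minor search breaks down rather than succeeding: the $K_{3,3}$ above needs three distinct proper left ideals containing $I_{123}$, but when $n=4$ the only candidates are $I_{123}$ and $I_{1234}=S$, so only one survives as a vertex. Hence one cannot certify non-planarity and must instead produce, and verify the crossing-freeness of, the embedding. I expect the cleanest write-up to present the two-wheel drawing as a labelled figure, checking that the twelve connecting edges realize exactly the stated $6$-cycle-plus-pendants pattern, with the identification with the tetrakis hexahedron supplying the clean appeal to Steinitz's theorem.
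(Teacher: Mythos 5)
Your proof is correct, and it follows the same overall strategy as the paper (a Kuratowski obstruction for five or more minimal left ideals, an explicit embedding for $n\le 4$), but both halves are executed differently. For non-planarity the paper splits into two cases: when $|{\rm Min}(S)|=5$ it exhibits (in a figure) a subdivision of $K_{3,3}$, and when $|{\rm Min}(S)|\ge 6$ it uses the chain $I_1\subset I_{12}\subset I_{123}\subset I_{1234}\subset I_{12345}$ to produce a $K_5$; your single bipartition $\{I_1,I_2,I_3\}$ versus $\{I_{123},I_{1234},I_{1235}\}$ gives a genuine $K_{3,3}$ subgraph uniformly for all $|{\rm Min}(S)|\ge 5$ (all six sets are proper since each omits $I_4$ or $I_5$, which are disjoint from the others), which is cleaner and removes both the case split and the reliance on a figure. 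For $n=4$ the paper simply displays a crossing-free drawing, whereas you supply a checkable combinatorial certificate: the edge count $36=3\cdot 14-6$, the two-wheel decomposition, and the identification with the $1$-skeleton of the tetrakis hexahedron (the eight degree-$6$ vertices inducing $Q_3=K_{4,4}$ minus the matching of complementary pairs, each doubleton attached to the four vertices of one cube face). That identification is verifiable line by line and is arguably a stronger write-up than a picture; the only cosmetic point is that the direction of Steinitz's theorem you invoke is really the elementary fact that skeletons of convex polytopes are planar (via a Schlegel diagram), not the hard converse. Your small observation that the $K_{3,3}$ search genuinely fails at $n=4$ (only one proper superset of $I_{123}$ survives) is a nice sanity check absent from the paper.
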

\begin{proof}
		(i) Suppose that $| {\rm Min}(S) | =5$ with ${\rm Min}(S) = \{I_1, I_2, I_3, I_4, I_5\}$. Then, from the graph given below, note that $\mathcal{I}n(S)$ contains a subdivision of complete bipartite $K_{3,3}$ as a subgraph.
		
		\begin{figure}[h!]
			\centering
			\includegraphics[width=0.4\textwidth]{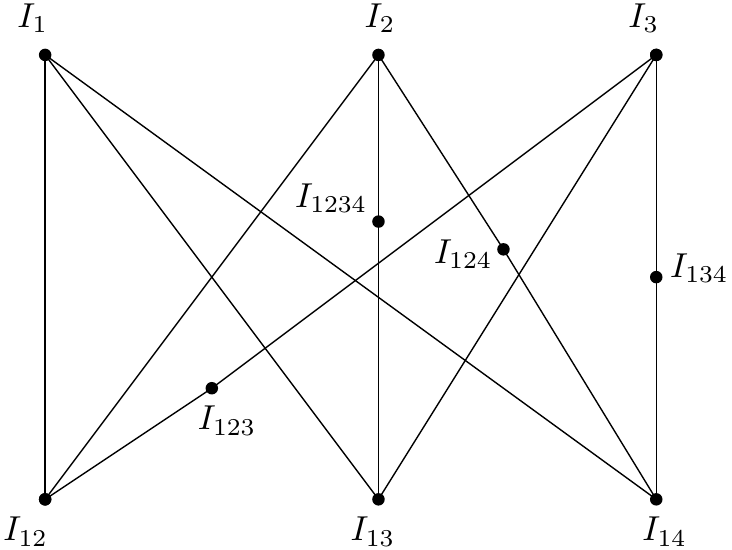}
			\caption{Subgraph  of  $\mathcal{I}n(S)$  homeomorphic to $K_{3,3}$}
		\end{figure}
	
		For $| {\rm Min}(S) | \geq 6$, note that $I_1 \subset I_{12} \subset I_{123} \subset I_{1234} \subset I_{12345}$ be a chain of nontrivial left ideals of $S$. Consequently, $\mathcal{I}n(S)$ contains a subgraph isomorphic to $K_5$. Thus, by Kurwatowski theorem, $\mathcal{I}n(S)$ is nonplanar.
		
		(ii) The proof for $\mathcal{I}n(S)$ is nonplanar for $n \geq 5$ follows  from part (i).
		By Corollary \ref{null graph} and Theorem \ref{two minimal}, $\mathcal{I}n(S)$ is planar for $n= 2$. For $n= 3$, note that $\mathcal{I}n(S) \cong C_6$ so that $\mathcal{I}n(S)$ is planar. The planarity of $\mathcal{I}n(S)$ can be seen from the following graph of $\mathcal{I}n(S)$, for $n = 4$.
		\begin{figure}[h!]
			\centering
			\includegraphics[width=0.5 \textwidth]{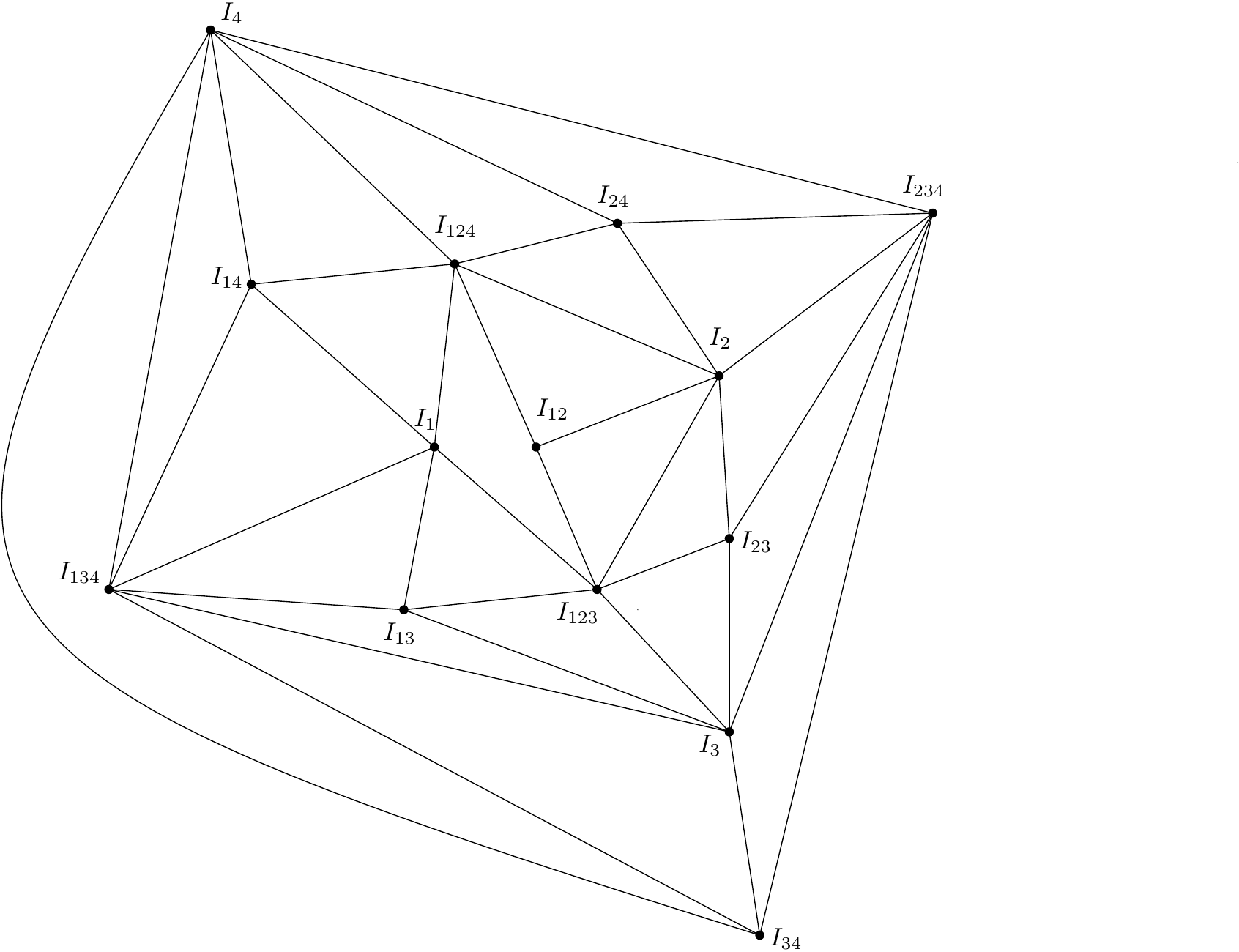}
			\caption{Planar drawing of  $\mathcal{I}n(S)$}
		\end{figure}
		
		\end{proof}
%%%%%%%%%%%%%%%%%%%%%%%%%%%%%%%%%%%%%%%%%%%%%%%%%%%%%%%%%%%%%%%%%%%%%%%%%%%%%%%%%%%%%%%%%%%%%%%%%%%%%%%%%%%%%%%%%%%%%%%%%%%%%%%%%%%%%%%%%%%%%%%%%%%%%%%%%%%%%%%%%%%%%%%%%%%%%%%%%%%%%%%%%%%%%%%%%%%%%%%%%%%%%%%%%%%%%%%%%%%%%%%%%%%%%%%%%%%%%%%%%%%%%%%%%%%%%%%%%%%%%%%%%%%%%%%%%%%%%%%%%%%%%%%%%%%%%%%%%%%%%%%%%%%%%%%%%%%%%%%%%%%%%%%%%%%%%%%%%%%%%%%%%%%%%%%%%%%
\newpage
\section{Inclusion ideal graph of completely simple semigroup}

In this section, we study various graph invariants including the dominance number, clique number, independence number of the inclusion ideal graph of a completely simple semigroup $S$. We also prove that the graph $\mathcal{I}n(S)$ has a perfect matching (cf. Theorem \ref{perfectmatchinginclusion}). In what follows, for $n \in \mathbb{N}$, we denote $[n] = \{\ 1, 2, \ldots, n \}$. For a completely simple semigroup having $n$ minimal left ideals, we write a nontrivial left ideal $I_{{i_1}{i_2} \cdots {i_{k}}} = I_{i_1} \cup  I_{i_2} \cup  I_{i_3} \cup \cdots \cup  I_{i_k}$ such that $i_1, i_2, \ldots, i_k \in [n]$ and $1 \leq k \leq n-1$, where $I_{i_1}, I_{i_2}, \ldots, I_{i_k}$ are minimal left ideals of $S$.

\begin{lemma}
Let $S$ be a completely simple semigroup with $n$ minimal left ideals. Then $\mathcal{I}n(S)$ is disconnected for $n = 2$, and connected for $n \geq 3$. Moreover, if $\mathcal{I}n(S)$ is connected then $diam(\mathcal{I}n(S)) = 3$.
	\end{lemma}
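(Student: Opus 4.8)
The plan is to separate the statement into the connectivity assertion and the diameter assertion, and in each case reduce to a result already established for arbitrary semigroups. Since $S$ is completely simple, it is the union of its minimal left ideals, so writing ${\rm Min}(S) = \{I_1, \ldots, I_n\}$ we have $S = I_1 \cup \cdots \cup I_n$, and by Remark \ref{disjoint intersection minimal} these ideals are pairwise disjoint and nonempty. For $n = 2$ this says $S = I_1 \cup I_2$ is the union of exactly two minimal left ideals, so $\mathcal{I}n(S)$ is disconnected by Theorem \ref{two minimal}. For $n \geq 3$, no two minimal left ideals can union to $S$ (their union omits a third nonempty minimal left ideal), so $S$ is not the union of exactly two minimal left ideals; the contrapositive of Theorem \ref{two minimal} then gives that $\mathcal{I}n(S)$ is connected.

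It remains to pin down the diameter when $n \geq 3$. Theorem \ref{diameter3} already yields $diam(\mathcal{I}n(S)) \leq 3$, so I only need to exhibit a pair of vertices at distance exactly $3$. I would take $\mathcal{I} = I_1$ and $\mathcal{I}' = I_2 \cup \cdots \cup I_n$. Both are genuine vertices: $\mathcal{I}'$ is a union of minimal left ideals, hence a left ideal, it is nonempty since $n \geq 3$, and it is proper since $I_1$ is disjoint from it. Because $I_1$ is minimal and disjoint from $\mathcal{I}'$, neither ideal contains the other, so $\mathcal{I} \nsim \mathcal{I}'$ and $d(\mathcal{I}, \mathcal{I}') \geq 2$. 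An explicit path $I_1 \sim (I_1 \cup I_2) \sim I_2 \sim (I_2 \cup \cdots \cup I_n)$ of length $3$ then shows $d(\mathcal{I}, \mathcal{I}') \leq 3$.

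The crux --- and the step I expect to be the main obstacle --- is ruling out distance $2$, i.e.\ showing $\mathcal{I}$ and $\mathcal{I}'$ have no common neighbor. Suppose $K$ were adjacent to both. Adjacency with the minimal ideal $I_1$ forces $I_1 \subsetneq K$, since $K \subsetneq I_1$ is impossible by minimality. Adjacency with $\mathcal{I}'$ leaves two options: $K \subset \mathcal{I}'$ is excluded because $I_1 \subseteq K$ while $I_1 \cap \mathcal{I}' = \emptyset$, and $\mathcal{I}' \subset K$ forces $K \supseteq I_1 \cup \mathcal{I}' = S$, so $K = S$, contradicting that $K$ is a nontrivial left ideal. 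Hence no common neighbor exists and $d(\mathcal{I}, \mathcal{I}') = 3$, giving $diam(\mathcal{I}n(S)) = 3$. The whole argument leans on the disjointness of minimal left ideals together with the fact that $S$ is their union, which is exactly where complete simplicity (rather than an arbitrary semigroup) enters.
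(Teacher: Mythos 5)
Your proof is correct and follows essentially the same route as the paper: connectivity via Theorem \ref{two minimal} together with the fact that a completely simple semigroup is the union of its minimal left ideals, and the diameter via the same witness pair $I_1$ and $I_2\cup\cdots\cup I_n$ with the same length-$3$ path. The only difference is that you explicitly rule out distances $1$ and $2$ using disjointness of minimal left ideals, a verification the paper asserts without detail, so your write-up is if anything more complete.
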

	\begin{proof}
	 By Lemma \ref{union minimal} and  Theorem \ref{two minimal}, $\mathcal{I}n(S)$ is disconnected for $n = 2$. For $n \geq 3$, as a consequence of Theorem \ref{two minimal}, $\mathcal{I}n(S)$ is connected. Let $I_1, I_{{2}{3}{4}\cdots{n}}$ be two nontrivial left ideals. Then there exists a shortest path $I_1 \sim I_{{1}{2}} \sim I_2 \sim I_{{2}{3}{4}\cdots{n}}$ such that $d(I_1, I_{{2}{3}{4}\cdots{n}}) = 3$. By Theorem \ref{diameter3}, $diam(\mathcal{I}n(S)) = 3$.
	\end{proof}
	\begin{theorem}\label{order}
		Let $S$ be a completely simple semigroup with $n$ minimal left ideals. Then $\mathcal{I}n(S)$ is a graph of order $2^n - 2$.
	\end{theorem}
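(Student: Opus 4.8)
The plan is to set up an explicit bijection between the nontrivial left ideals of $S$ and the non-empty proper subsets of ${\rm Min}(S) = \{I_1, I_2, \ldots, I_n\}$, and then count the latter. Since the order of $\mathcal{I}n(S)$ is by definition the number of its vertices, i.e. the number of nontrivial left ideals of $S$, this reduces the entire statement to a counting problem.

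First I would define a map $\Phi$ sending each non-empty proper subset $A = \{i_1, i_2, \ldots, i_k\} \subsetneq [n]$ to the left ideal $I_{{i_1}{i_2}\cdots{i_k}} = \bigcup_{i \in A} I_i$. This is well defined and produces a nontrivial left ideal: a union of left ideals is again a left ideal (recalled in the preliminaries, extended to finite unions by induction), it is non-empty since each $I_i$ is, and it is proper because $A$ is a proper subset. Indeed, choosing some $j \in [n] \setminus A$ and using Remark \ref{disjoint intersection minimal}, the ideal $I_j$ is disjoint from $\bigcup_{i \in A} I_i$, so the union over $A$ cannot be all of $S = I_1 \cup \cdots \cup I_n$.

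Next I would verify that $\Phi$ is a bijection. Surjectivity is exactly Lemma \ref{union minimal}: every nontrivial left ideal of $S$ is a union of minimal left ideals, hence is the image under $\Phi$ of the (non-empty, proper) index set of the minimal left ideals it contains. Injectivity is the place where the pairwise disjointness of the minimal left ideals (Remark \ref{disjoint intersection minimal}) is essential: if $A \neq B$, say $i \in A \setminus B$, then $I_i \subseteq \Phi(A)$ while $I_i \cap \Phi(B) = \emptyset$, so $\Phi(A) \neq \Phi(B)$. Thus distinct index sets yield distinct left ideals.

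Finally, counting the non-empty proper subsets of an $n$-element set gives $2^n - 2$: there are $2^n$ subsets in total, and we discard the empty set, which corresponds to no left ideal, together with the full set $[n]$, which corresponds to $S$ itself and is therefore not a nontrivial left ideal. Consequently $\mathcal{I}n(S)$ has exactly $2^n - 2$ vertices. I do not anticipate any serious obstacle; the only point demanding care is the bookkeeping at the two boundary subsets—ensuring that the empty union and the full union are correctly excluded—together with the twofold use of disjointness (for properness of $\Phi(A)$ and for injectivity of $\Phi$), which is precisely where the completely simple hypothesis enters through the cited lemmas.
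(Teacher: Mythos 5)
Your proposal is correct and follows essentially the same route as the paper: both invoke Lemma \ref{union minimal} to identify the nontrivial left ideals with unions of minimal ones and then count the $2^n-2$ non-empty proper index sets. The only difference is that you explicitly verify, via the disjointness in Remark \ref{disjoint intersection minimal}, that distinct index sets give distinct, proper ideals---details the paper's proof leaves implicit.
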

	\begin{proof}
		In view of Lemma \ref{union minimal}, the vertices of $\mathcal{I}n(S)$ are either minimal left ideals or union of minimal left ideals. In addition to $n$ minimal left ideals, we have $n \choose 2$ + $n \choose 3$+ $\cdots $+$n \choose n-1$ nontrivial left ideals as a union of minimal left ideals. Thus, we obtain $n \choose 1$ +$n \choose 2$ + $n \choose 3$+ $\cdots $+$n \choose n-1$ $= 2^n-2$ nontrivial left ideals of $S$. Hence, $| V(\mathcal{I}n(S))| = 2^n-2$.
	\end{proof}
	Now in the following lemma we obtain the degree of each vertex of $\mathcal{I}n(S)$. 
	
	\begin{lemma}\label{degree}
		Let $S$ be a completely simple semigroup with $n$ minimal left ideals and let $K = I_{{i_1}{i_2} \cdots {i_{k}}}$ be a  nontrivial left ideal of $S$. Then $deg(K) = (2^k-2) + (2^{n-k}-2)$.
	\end{lemma}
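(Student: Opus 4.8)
The plan is to translate the adjacency structure of $\mathcal{I}n(S)$ into the combinatorics of subsets of $[n]$. By Lemma \ref{union minimal} every nontrivial left ideal of $S$ is a union of minimal left ideals, and by Remark \ref{disjoint intersection minimal} distinct minimal left ideals are pairwise disjoint; hence the map sending $I_{{j_1}{j_2}\cdots{j_m}}$ to the set $\{j_1, \ldots, j_m\} \subseteq [n]$ is a bijection from $V(\mathcal{I}n(S))$ onto the nonempty proper subsets of $[n]$, under which containment of ideals corresponds exactly to containment of the associated subsets. Writing $A = \{i_1, \ldots, i_k\}$ for the set attached to $K$, a vertex $L$ is adjacent to $K$ precisely when its associated set $B$ satisfies $B \subsetneq A$ or $A \subsetneq B$.

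First I would count the neighbours lying below $K$, i.e. those $L$ with $L \subset K$. These correspond to the sets $B$ with $B \subsetneq A$; since $A$ has $k$ elements there are $2^k$ subsets of $A$, and discarding $B = \emptyset$ (not a vertex) and $B = A$ (which gives $L = K$) leaves $2^k - 2$ such neighbours. Each of these is automatically a valid vertex, because $B \subsetneq A \subsetneq [n]$ forces the associated ideal to be nontrivial.

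Next I would count the neighbours lying above $K$, i.e. those $L$ with $K \subset L$. These correspond to the sets $B$ with $A \subsetneq B \subseteq [n]$. The subsets of $[n]$ containing $A$ are obtained by freely adjoining an arbitrary subset of the $n-k$ indices outside $A$, giving $2^{n-k}$ of them; removing $B = A$ and $B = [n]$ (the latter corresponding to $S$, which is not a vertex) leaves $2^{n-k} - 2$ such neighbours.

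Finally, since no set can be simultaneously a proper subset and a proper superset of $A$, the two families of neighbours are disjoint, and summing gives $deg(K) = (2^k - 2) + (2^{n-k} - 2)$. I do not expect any serious obstacle here; the only point demanding care is the bookkeeping of which extreme cases (the empty set, $A$ itself, and the whole of $S$) must be excluded, so as to avoid over- or under-counting in either of the two tallies.
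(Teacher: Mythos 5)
Your proof is correct and follows essentially the same route as the paper: both split the neighbours of $K$ into the nontrivial left ideals properly contained in $K$ and those properly containing $K$, then count each family, the paper via sums of binomial coefficients and you via the direct bijection with proper nonempty subsets of the index set. Your explicit justification of why containment of ideals matches containment of index sets (via Lemma \ref{union minimal} and Remark \ref{disjoint intersection minimal}) is a welcome extra detail, but the argument is the same.
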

	\begin{proof}
		Let $K = I_{{i_1}{i_2} \cdots {i_{k}}}$ be a nontrivial left ideal of $S$. Then nontrivial left ideals of $S$ which are adjacent to $K$ are  nontrivial left ideals properly contained in $K$ and nontrivial left ideals of $S$ properly containing $K$. By the proof of Theorem \ref{order}, we have $k \choose 1$ +$k \choose 2$ + $k \choose 3$+ $\cdots $+$k \choose k-1$ $= 2^k-2$ nontrivial left ideals which are properly contained in $K$. Further, if $K (=I_{{i_1}{i_2} \cdots {i_{k}}}) \subset J$ then note that $J = I_{{i_1}{i_2} \cdots {i_{k}}{i_{k+1}}{i_{k+2}} \cdots {i_{s}}}$ such that $i_{k+1}, i_{k+2}, \ldots, i_{s} \in [n] \setminus \{i_1, i_2, \ldots, i_k\}$ and $1 \leq s \leq n-k-1$. Consequently, we have $\sum_{i=1}^{n-k-1} \binom{n-k}{i} = 2^{n-k}-2$ nontrivial left ideals which properly contains $K$. Hence, $deg(K) = (2^k-2) + (2^{n-k}-2)$.
	\end{proof}
	
	\begin{corollary}
		Let $S$ be a completely simple semigroup with $n$ minimal left ideals. Then the graph $\mathcal{I}n(S)$ is Eulerian for $n  \geq 3$.
	\end{corollary}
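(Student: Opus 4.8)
The plan is to invoke the standard criterion that a connected graph is Eulerian if and only if the degree of every vertex is even (the cited \cite[Theorem 1.2.26]{westgraph}). Thus the argument splits into two independent checks: connectedness and the evenness of all vertex degrees. The first is already in hand, since the preceding lemma establishes that $\mathcal{I}n(S)$ is connected precisely when $n \geq 3$, which is exactly our hypothesis. So the entire remaining task is to verify that every vertex has even degree.

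For the degree computation I would apply Lemma \ref{degree} directly. An arbitrary vertex of $\mathcal{I}n(S)$ has the form $K = I_{{i_1}{i_2}\cdots{i_k}}$ with $1 \leq k \leq n-1$, and its degree is $\deg(K) = (2^k - 2) + (2^{n-k} - 2)$. I would then argue parity term by term: because $k \geq 1$, the quantity $2^k$ is even, so $2^k - 2$ is even; and because the constraint $k \leq n-1$ forces $n-k \geq 1$, the quantity $2^{n-k}$ is likewise even, so $2^{n-k} - 2$ is even. As a sum of two even integers, $\deg(K)$ is even. Since $K$ was an arbitrary vertex, every vertex of $\mathcal{I}n(S)$ has even degree.

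Combining the two observations, $\mathcal{I}n(S)$ is a connected graph (for $n \geq 3$) in which every vertex has even degree, so the Eulerian criterion yields the conclusion at once. There is no genuine obstacle here; the whole content is the parity bookkeeping on the two summands of Lemma \ref{degree}. The only point worth stating carefully is that the range $1 \leq k \leq n-1$ is exactly what guarantees both exponents $k$ and $n-k$ are at least $1$; this fails only when $n \leq 2$, which is harmless because for $n = 2$ the graph is disconnected and the Eulerian question does not arise, consistent with restricting to $n \geq 3$.
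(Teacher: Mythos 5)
Your proof is correct and is exactly the argument the paper intends: the corollary is stated as an immediate consequence of the preceding connectedness lemma and Lemma \ref{degree}, with the parity of $(2^k-2)+(2^{n-k}-2)$ doing all the work. Nothing to add.
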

\begin{theorem}	Let $S$ be a completely simple semigroup with $n$ minimal left ideals. Then

\[g(\mathcal{I}n(S)) =
  \begin{cases}
\infty &  $n = 2$ \\
6 &  $n = 3$\\
3 & \text{otherwise}
  \end{cases}\]
\end{theorem}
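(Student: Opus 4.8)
The plan is to reduce everything to the combinatorial structure that complete simplicity forces on the vertex set. By Lemma \ref{union minimal}, every nontrivial left ideal of $S$ is a union of minimal left ideals; moreover, since a completely simple semigroup is the union of all $n$ of its minimal left ideals, the ideal $I_{12\cdots n}$ equals $S$ and is therefore \emph{not} a vertex. Hence the vertices of $\mathcal{I}n(S)$ are precisely the ideals $I_{i_1 i_2 \cdots i_k}$ with $1 \le k \le n-1$, and two of them are adjacent exactly when one is properly contained in the other. In particular a triangle in $\mathcal{I}n(S)$ corresponds to a chain $I \subset I' \subset I''$ of three distinct nontrivial left ideals, and the presence or absence of such a chain is what separates the three cases.

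First I would dispose of $n = 2$: by the preceding lemma together with Corollary \ref{null graph}, $\mathcal{I}n(S)$ is a null graph, its only vertices $I_1$ and $I_2$ being incomparable. A null graph has no cycle, so $g(\mathcal{I}n(S)) = \infty$.

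Next, for $n \ge 4$, I would simply exhibit a triangle. Choosing three minimal left ideals $I_1, I_2, I_3$, the hypothesis $n \ge 4$ guarantees that $I_{123} = I_1 \cup I_2 \cup I_3$ is a proper, hence nontrivial, left ideal (it is strictly contained in $S = I_1 \cup \cdots \cup I_n$). Then $I_1 \subset I_{12} \subset I_{123}$ yields the $3$-cycle $I_1 \sim I_{12} \sim I_{123} \sim I_1$, so $g(\mathcal{I}n(S)) = 3$.

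The case $n = 3$ is where the argument is most delicate, since it is exactly the exclusion of $I_{123} = S$ as a vertex that prevents a triangle from forming. Here the vertex set consists of the $\binom{3}{1} + \binom{3}{2} = 6$ ideals $I_1, I_2, I_3, I_{12}, I_{13}, I_{23}$, and by Lemma \ref{union minimal} there are no others. Each singleton ideal $I_i$ is adjacent precisely to the two two-element unions containing it, and I would verify directly that these adjacencies trace out the cycle $I_1 \sim I_{12} \sim I_2 \sim I_{23} \sim I_3 \sim I_{13} \sim I_1$, so that $\mathcal{I}n(S) \cong C_6$. Since the longest chain of proper nonempty unions of minimal left ideals now has length $2$, there is no triangle, and a graph isomorphic to $C_6$ has no $4$- or $5$-cycle either; hence $g(\mathcal{I}n(S)) = 6$. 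Combining the three cases gives the stated formula.
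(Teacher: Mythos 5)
Your proposal is correct and follows essentially the same route as the paper: both use Lemma \ref{union minimal} to identify the vertices with proper unions of minimal left ideals, dispose of $n=2$ via the null graph, identify $\mathcal{I}n(S)$ with $C_6$ for $n=3$, and exhibit the triangle $I_1 \sim I_{12} \sim I_{123} \sim I_1$ for $n\ge 4$. Your explicit verification of the $C_6$ structure in the $n=3$ case is slightly more detailed than the paper's, which simply cites the lemma, but the argument is the same.
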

	\begin{proof}
	If $n = 2$ then by Theorem \ref{two minimal} and Corollary \ref{null graph}, the graph $\mathcal{I}n(S)$ is disconnected. It follows that $g(\mathcal{I}n(S)) = \infty$. If $n =3$ then by Lemma \ref{union minimal}, $\mathcal{I}n(S) \cong C_6$. Consequently, $g(\mathcal{I}n(S)) = 6$. For $n \geq 4$, we have at least $I_1, I_2, I_3, I_4$ minimal left ideals of $S$ so that we obtain a cycle $I_1 \sim (I_1 \cup I_2) \sim (I_1 \cup I_2 \cup I_3) \sim I_1$ of length $3$. Thus, $g(\mathcal{I}n(S)) = 3$.
	\end{proof}
%%%%%%%%%%%%%%%%%%%%%%%%%%%%%%%%%%%%%%%%%%%%%%%%%%%%%%%%%%%%%%%%%%%%%%%%%%%%%%%%%%%%%%%%%%
\begin{theorem}\label{clique number}
Let $S$ be a completely simple semigroup with $n$ minimal left ideals. Then
\begin{enumerate}
    \item[{\rm(i)}] $\mathcal{I}n(S)$ is a bipartite graph if and only if $n = 3$.
    \item[{\rm(ii)}] the dominance number of $\mathcal{I}n(S)$ is $2$.
    \item[{\rm(iii)}] for $n \geq 4$, $\mathcal{I}n(S)$ is triangulated.
    \item[{\rm(iv)}] the clique number of $\mathcal{I}n(S)$ is $n-1$.
    %$\omega(\mathcal{I}n(S)) = n-1$.
\end{enumerate}
\end{theorem}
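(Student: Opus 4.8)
The plan is to first recast the whole problem inside the Boolean lattice. By Lemma~\ref{union minimal} together with the fact that a completely simple semigroup is the union of its minimal left ideals, the nontrivial left ideals of $S$ are exactly the sets $I_A := \bigcup_{i\in A} I_i$ for $\varnothing \neq A \subsetneq [n]$; moreover, since distinct minimal left ideals are disjoint (Remark~\ref{disjoint intersection minimal}), we have $I_A \subset I_B$ if and only if $A \subsetneq B$. Hence $\mathcal{I}n(S)$ is isomorphic to the comparability graph on the nonempty proper subsets of $[n]$, where two subsets are adjacent precisely when one is properly contained in the other. Under this dictionary a clique is nothing but a chain, and I use it throughout. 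Part (iv) is then immediate: since $S$ is completely simple, $S = I_1 \cup \cdots \cup I_n = I_{{1}{2}\cdots{n}}$, so Lemma~\ref{cliqueinclusion} applies verbatim and yields $\omega(\mathcal{I}n(S)) = n-1$ (equivalently, the longest chain of nonempty proper subsets of $[n]$ has $n-1$ members).

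For part (iii) I would show that for $n \geq 4$ every vertex $I_A$ lies on a chain of three nonempty proper subsets, which is exactly a triangle through $I_A$. I split on $|A|$: if $2 \le |A| \le n-2$, take $A \setminus \{x\} \subsetneq A \subsetneq A \cup \{z\}$; if $|A| = 1$, take $A \subsetneq A \cup \{x\} \subsetneq A \cup \{x,y\}$ with $x,y \notin A$ distinct; and if $|A| = n-1$, take a chain $B \subsetneq B' \subsetneq A$ obtained by deleting two elements from $A$. The hypothesis $n \ge 4$ is exactly what guarantees enough room in the two extreme cases, where one needs $|A \cup \{x,y\}| = 3 \le n-1$ and, dually, $|A|-2 \ge 1$. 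Handling these boundary sizes $|A| \in \{1, n-1\}$ is the one genuinely delicate point of the whole theorem.

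Part (i) then follows quickly. For $n = 3$, Lemma~\ref{union minimal} forces $\mathcal{I}n(S) \cong C_6$, an even cycle, hence bipartite. For $n \ge 4$, part (iii) produces a triangle, i.e. an odd cycle, so by the characterization that a graph is bipartite if and only if it has no odd cycle, $\mathcal{I}n(S)$ is not bipartite; the remaining value $n = 2$ gives the disconnected null graph.

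Finally, for part (ii) I claim $D = \{I_1,\, I_{{2}{3}\cdots{n}}\}$ is a dominating set: a vertex $I_A$ with $A \neq \{1\}, \{2,\ldots,n\}$ satisfies $I_A \sim I_1$ when $1 \in A$ (as then $\{1\} \subsetneq A$) and $I_A \sim I_{{2}{3}\cdots{n}}$ when $1 \notin A$ (as then $A \subsetneq \{2,\ldots,n\}$), so $\gamma(\mathcal{I}n(S)) \le 2$. To see that no single vertex dominates, given any $A$ choose $i \in A$ and $j \notin A$ and set $B = (A \setminus \{i\}) \cup \{j\}$; then $B$ is a nonempty proper subset, distinct from $A$ and incomparable to it, so $I_B \nsim I_A$ and $I_A$ cannot dominate $I_B$. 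Hence $\gamma(\mathcal{I}n(S)) = 2$. Beyond the boundary bookkeeping in part (iii), every step reduces cleanly to the chain/comparability dictionary established at the outset.
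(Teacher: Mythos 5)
Your proposal is correct and follows essentially the same route as the paper: the Boolean-lattice dictionary you set up is exactly what Lemma~\ref{union minimal}, Remark~\ref{disjoint intersection minimal} and Lemma~\ref{same union not adjacent} encode, and you use the same dominating set $\{I_1, I_{23\cdots n}\}$, the same triangles through each vertex, the same appeal to Lemma~\ref{cliqueinclusion} for (iv), and the same $C_6$ observation for (i). If anything, your explicit construction of an incomparable set $B=(A\setminus\{i\})\cup\{j\}$ to rule out a dominating vertex is more careful than the paper, which merely asserts that no dominating vertex exists.
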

\begin{proof}(i)
		If $n= 3$ then by Lemma \ref{union minimal}, $\mathcal{I}n(S) \cong C_6$ which is a bipartite graph.
		Conversely, suppose that $\mathcal{I}n(S)$ is a bipartite graph. Let if possible, $n = 2$. Then by Lemma \ref{union minimal} and Theorem \ref{two minimal}, $\mathcal{I}n(S)$  is disconnected, a contradiction for $\mathcal{I}n(S)$  to be bipartite. Suppose $n \geq 4$ and $I_1,  I_2, I_3$ are the minimal left ideals of $S$. Since $I_1 \subset I_{12} \subset I_{123}$ we get a cycle $I_1 \sim I_{12} \sim I_{123} \sim I_1$ of odd length. Thus, $\mathcal{I}n(S)$ is not a bipartite graph, a contradiction.
		
		(ii) Since there is no dominating vertex in $\mathcal{I}n(S)$, we have $\gamma (\mathcal{I}n(S)) \geq 2$. To prove the result we show that there exists a dominating set of size two in $\mathcal{I}n(S)$. We claim that the set $D = \{I_1, I_{234 \cdots n}\}$ is a dominating set. Let $J = I_{{i_1}{i_2} \cdots {i_{k}}}$, where $i_1, i_2, \ldots, i_k \in [n]$ and $1 \leq  k \leq n-1$ be a nontrivial left ideal of $S$ such that  $J \in  V(\mathcal{I}n(S)) \setminus D$. If some $i_s = 1$ then $I_1 \sim J$. Otherwise, for $1 \leq k \leq n-2$, $J \sim I_{234 \cdots n}$. Thus, $D$ is a dominating set of size two.
		
		(iii) We show that any vertex of $\mathcal{I}n(S)$ is a vertex of a triangle. Let $J = I_{{i_1}{i_2} \cdots {i_{k}}}$, where $i_1, i_2, \ldots, i_k \in [n]$ and $1 \leq  k < n$. If $k=1$ then $J = I_{i_1} \sim I_{{i_1}{i_2}} \sim I_{{i_1}{i_2}{i_3}} \sim J$ gives a triangle. If $k =2$, then we have $J = I_{{i_1}{i_2}} \sim I_{{i_1}{i_2}{i_3}} \sim I_{i_1} \sim J$. Consequently, we get a triangle. If $k \geq 3$ then note that $J = I_{{i_1}{i_2}{i_3}\cdots {i_k}} \sim I_{i_1} \sim  I_{{i_1}{i_2}} \sim J$ is a triangle. Hence,  $\mathcal{I}n(S)$ is triangulated.
		
		(iv) The result follows from Lemma \ref{cliqueinclusion}. %Since we have $n$ minimal left ideals, namely $I_{i_1}, I_{i_2}, \ldots, I_{i_n}$. Note that $\mathcal{C} = \{I_{i_1}, I_{{i_1}{i_2}}, \ldots, I_{{i_1}{i_2}\cdots {i_{n-1}}}\}$ be a clique of size $n-1$ in $\mathcal{I}n(S)$. Let $\mathcal{C} \cup \{J\}$ be a clique in $\mathcal{I}n(S)$, where $J= I_{{i_1}{i_2}\cdots{i_{k}}}$ for some $k$, $1 \leq k \leq n-1$. Then $J$ is adjacent with every element of $\mathcal{C}$, a contradiction (see Lemma \ref{same union not adjacent}). Consequently, $\mathcal{C}$ is a maximal clique in $\mathcal{I}n(S)$. Now if $\mathcal{C}'$ be a clique of size $n$ then there exist two nontrivial left ideals in $\mathcal{C}'$ which are union of $k$ minimal left ideals for some $k$, where $1 \leq k \leq n-1$. By Lemma \ref{same union not adjacent}, a contradiction for $\mathcal{C}'$ to be a clique in $\mathcal{I}n(S)$. Hence, $\omega(\mathcal{I}n(S)) = n-1$.
		\end{proof}
	
	In view of Theorem \ref{perfect theorem finite no of left ideals}, we have the following corollary of Theorem \ref{clique number}.
	\begin{corollary}
		Let $S$ be a completely simple semigroup with $n$ minimal left ideals. Then $\chi(\mathcal{I}n(S))  = n-1$.
	\end{corollary}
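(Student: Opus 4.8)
The plan is to deduce the corollary by combining the perfectness of $\mathcal{I}n(S)$ with the already-computed value of its clique number, rather than constructing a coloring by hand. First I would check that the hypothesis of Theorem \ref{perfect theorem finite no of left ideals} is genuinely met: since $S$ is completely simple with $n$ minimal left ideals, Theorem \ref{order} tells us that $\mathcal{I}n(S)$ has exactly $2^n-2$ vertices, so $S$ possesses only finitely many nontrivial left ideals. Hence Theorem \ref{perfect theorem finite no of left ideals} applies and $\mathcal{I}n(S)$ is a perfect graph.

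Next I would recall the general bound $\omega(\Gamma) \leq \chi(\Gamma)$, which together with Theorem \ref{clique number}(iv) already yields $\chi(\mathcal{I}n(S)) \geq n-1$. The reverse inequality is where perfectness does the work: by definition a perfect graph $\Gamma$ satisfies $\omega(\Gamma') = \chi(\Gamma')$ for every induced subgraph $\Gamma'$, and taking $\Gamma' = \mathcal{I}n(S)$ itself gives $\chi(\mathcal{I}n(S)) = \omega(\mathcal{I}n(S))$. Substituting $\omega(\mathcal{I}n(S)) = n-1$ from Theorem \ref{clique number}(iv) then forces $\chi(\mathcal{I}n(S)) = n-1$, as required.

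Since all the substantive content has been carried out in Theorem \ref{perfect theorem finite no of left ideals} and Theorem \ref{clique number}, I do not expect any real obstacle here; the corollary is a one-line consequence. The only points warranting a sanity check are the finiteness hypothesis of the perfectness theorem (secured via Theorem \ref{order}) and the degenerate small cases, namely $n=2$, where $\mathcal{I}n(S)$ is the null graph on two vertices so that $\chi = 1 = n-1$, and $n=3$, where $\mathcal{I}n(S) \cong C_6$ is bipartite so that $\chi = 2 = n-1$. Both conform to the formula, so the statement holds uniformly for all $n$.
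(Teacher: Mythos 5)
Your proposal is correct and follows exactly the route the paper intends: the corollary is stated as an immediate consequence of Theorem \ref{perfect theorem finite no of left ideals} (perfectness, applicable since $|V(\mathcal{I}n(S))|=2^n-2$ is finite by Theorem \ref{order}) together with $\omega(\mathcal{I}n(S))=n-1$ from Theorem \ref{clique number}(iv). Your additional sanity checks for $n=2$ and $n=3$ are consistent with the paper and add nothing that changes the argument.
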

	
\begin{theorem}
		Let $S$ be a completely simple semigroup with $n$ minimal left ideals. Then the graph  $\mathcal{I}n(S)$ is edge transitive  if and only if  $n \in \{2,3\}$.
	\end{theorem}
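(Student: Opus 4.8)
The plan is to treat the two implications separately, using the degree formula of Lemma \ref{degree} as the main tool throughout. By Lemma \ref{union minimal} every vertex of $\mathcal{I}n(S)$ is a union of minimal left ideals, so I identify a vertex $I_{{i_1}{i_2}\cdots{i_k}}$ with the subset $\{i_1, \ldots, i_k\} \subseteq [n]$ of size $k$ (with $1 \le k \le n-1$), and the degree of such a vertex is $(2^k - 2) + (2^{n-k} - 2)$.

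For the (easy) sufficiency direction I would dispose of the two small cases directly. When $n = 2$, Theorem \ref{two minimal} together with Corollary \ref{null graph} shows $\mathcal{I}n(S)$ is a null graph with no edges, so it is vacuously edge transitive. When $n = 3$, Lemma \ref{union minimal} gives $\mathcal{I}n(S) \cong C_6$; the cyclic rotation subgroup of $\mathrm{Aut}(C_6)$ acts transitively on the six edges of $C_6$, so $\mathcal{I}n(S)$ is edge transitive.

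The substance lies in the converse: for $n \ge 4$ I would produce two edges that no automorphism can interchange. Since every graph automorphism preserves vertex degrees, it suffices to exhibit two edges whose unordered pairs of endpoint degrees differ. Consider $e_1 : I_1 \sim I_{{1}{2}\cdots{(n-1)}}$ (a $1$-set inside an $(n-1)$-set) and $e_2 : I_1 \sim I_{{1}{2}}$ (a $1$-set inside a $2$-set); both are genuine edges for $n \ge 4$ since the containments are proper and the larger sets have admissible sizes. Using Lemma \ref{degree}, $\deg(I_1) = 2^{n-1} - 2$, $\deg(I_{{1}{2}\cdots{(n-1)}}) = (2^{n-1}-2) + (2^1 - 2) = 2^{n-1}-2$, while $\deg(I_{{1}{2}}) = (2^2 - 2) + (2^{n-2}-2) = 2^{n-2}$. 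Thus $e_1$ has endpoint-degree multiset $\{2^{n-1}-2,\, 2^{n-1}-2\}$ whereas $e_2$ has $\{2^{n-1}-2,\, 2^{n-2}\}$.

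The key inequality is $2^{n-2} \ne 2^{n-1} - 2$ for $n \ge 4$: indeed $(2^{n-1}-2) - 2^{n-2} = 2^{n-2} - 2 > 0$ once $n \ge 4$. Hence the two endpoint-degree multisets are distinct, so no automorphism can send $e_2$ to $e_1$, and $\mathcal{I}n(S)$ fails to be edge transitive. I expect the only delicate point to be the bookkeeping in the boundary case: one must note that the degree invariant collapses precisely when $n = 3$ (where $\mathcal{I}n(S) \cong C_6$ is $2$-regular, so every edge has endpoint-degree multiset $\{2,2\}$ and the invariant cannot obstruct edge transitivity), which is exactly why it separates edges only from $n \ge 4$ onward; the supporting arithmetic, however, is immediate.
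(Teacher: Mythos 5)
Your proof is correct, and for the substantive direction ($n\ge 4$) it takes a genuinely different route from the paper. The paper invokes the classical dichotomy that an edge-transitive graph (without isolated vertices) is either vertex-transitive or bipartite, and then rules out both alternatives: Lemma \ref{degree} shows $\mathcal{I}n(S)$ is not regular (hence not vertex transitive), and $g(\mathcal{I}n(S))=3$ for $n\ge 4$ shows it is not bipartite. You instead give a direct, self-contained argument: the unordered pair of endpoint degrees is an invariant of an edge under any automorphism, and your computation that $e_1: I_1\sim I_{12\cdots(n-1)}$ has endpoint-degree multiset $\{2^{n-1}-2,\,2^{n-1}-2\}$ while $e_2: I_1\sim I_{12}$ has $\{2^{n-1}-2,\,2^{n-2}\}$, together with $2^{n-2}\ne 2^{n-1}-2$ for $n\ge 4$, is accurate. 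What your approach buys is economy of prerequisites: it needs only Lemma \ref{degree} and no appeal to the Dauber--Harary theorem or to the girth computation; what the paper's approach buys is brevity once that classical theorem is taken as known. The sufficiency direction ($n=2$ null graph, $n=3$ gives $C_6$) is handled identically in both.
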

	\begin{proof}
	It is well known that  edge transitive graphs are either vertex transitive or bipartite. For $n \geq 4$, by Lemma \ref{degree}, $\mathcal{I}n(S)$ is not a regular graph so is not  vertex transitive. Also for $n \geq 4$, $g(\mathcal{I}n(S)) = 3$, hence $\mathcal{I}n(S)$ is not a bipartite graph. Thus,  $\mathcal{I}n(S)$ is not an edge transitive graph. Conversely, suppose that $n \in \{2, 3\}$. If $n =2$, then by Corollary \ref{null graph} and Theorem \ref{two minimal}, $\mathcal{I}n(S)$ is edge transitive. By Lemma \ref{union minimal}, $\mathcal{I}n(S) \cong C_6$, for $n = 3$, which is an edge transitive graph.
	\end{proof}
	%%%%%%%%%%%%%%%%%%%%%%%%%%%%%%%%%%%%%%%%%%%%%%%%%%%%%%%%%%%%%%%%%%%%%%%%%%%%%%%%%%%%%%%%%%
	Now we determine the independence number of the graph $\mathcal{I}n(S)$.
	\begin{remark}\label{binomial inequality}
		It is well known that
	\begin{itemize}
	    \item[(i)] $n \choose 1$ $\leq$ $n \choose 2$ $\leq$ $n \choose 3$ $\cdots$ $\leq$ $n \choose p$ $\geq$ $n \choose p+1$ $\geq$ $n \choose p+2$ $\cdots$ $\geq$ $n \choose n-1$, if $n = 2p$
		\vspace{.3cm}
		\item[(ii)] $n \choose 1$ $\leq$ $n \choose 2$ $\leq$ $n \choose 3$ $\cdots$ $\leq$ $n \choose p$ $=$ $n \choose p+1$ $\geq$ $n \choose p+2$ $\cdots$ $\geq$ $n \choose n-1$, if $n = 2p+1$.
	
	\end{itemize}
\end{remark}
	\begin{lemma}[{ \cite[Theorem 3.1.11]{westgraph}}](Hall's theorem)\label{hall theorem}
		Let $\Gamma$ be a finite bipartite graph with bipartite sets $X$ and $Y$. For a set $X'$ of   vertices in $X$, let $N_{\Gamma}$($X'$) denote the neighbourhood of $X'$ in $\Gamma$, i.e. the set of all vertices in $Y$ adjacent to some elements of $X'$. There is a matching that entirely covers $X$ if and only if every subset $X'$ of $X$ : $| X' |  \leq  |N_{\Gamma}(X')|$. 	
	\end{lemma}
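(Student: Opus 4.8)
The plan is to prove the two implications separately. The necessity is immediate: if a matching $M$ covers $X$, then for any $X' \subseteq X$ the vertices of $Y$ that $M$ pairs with $X'$ are pairwise distinct and all belong to $N_{\Gamma}(X')$, whence $|N_{\Gamma}(X')| \geq |X'|$. So the whole content lies in the sufficiency, which I would establish by induction on $|X|$; the base case $|X| = 1$ is clear, since Hall's condition forces the lone vertex of $X$ to have a neighbour and hence a matching edge.

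For the inductive step, assuming Hall's condition holds, I would distinguish two cases according to whether the condition is ever tight. \emph{Case 1:} every nonempty proper subset $X' \subsetneq X$ satisfies the strict inequality $|N_{\Gamma}(X')| \geq |X'| + 1$. Then I pick an arbitrary $x \in X$ together with some neighbour $y \in N_{\Gamma}(\{x\})$, delete both $x$ and $y$, and observe that removing $y$ lowers each neighbourhood by at most one, so Hall's condition persists on the smaller graph. Induction matches $X \setminus \{x\}$, and adjoining the edge $xy$ yields a matching covering $X$.

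\emph{Case 2:} some nonempty proper subset $X' \subsetneq X$ is tight, i.e. $|N_{\Gamma}(X')| = |X'|$. Here I would apply the induction hypothesis twice: first to the subgraph induced on $X'$ and $N_{\Gamma}(X')$ to obtain a matching saturating $X'$, and then to the subgraph induced on $X \setminus X'$ and $Y \setminus N_{\Gamma}(X')$; the union of the two matchings then covers $X$. To run the second application one must verify Hall's condition there: for $X'' \subseteq X \setminus X'$, the inequality $|N_{\Gamma}(X'' \cup X')| \geq |X''| + |X'|$ together with the tightness of $X'$ shows that $X''$ has at least $|X''|$ neighbours lying outside $N_{\Gamma}(X')$.

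I expect the main obstacle to be exactly this inheritance of Hall's condition under the Case 2 reduction: one cannot simply estimate $N_{\Gamma}(X'')$ in the residual graph directly, because the neighbourhoods of $X''$ and $X'$ may overlap inside $Y$, and a naive bound loses control of the count. The correct bookkeeping passes through the combined set $X'' \cup X'$, so that the tight count on $X'$ can be subtracted off cleanly; once this transfer is in place, both sub-applications of the inductive hypothesis are legitimate and everything else is routine.
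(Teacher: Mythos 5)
The paper does not prove this lemma at all: it is quoted verbatim as a known result, with a citation to West's textbook (Theorem 3.1.11), and is used later only as a tool in Lemma \ref{matchinglemma}. Your proposal supplies the classical Halmos--Vaughan induction, and it is correct. The necessity direction is as trivial as you say. In the sufficiency induction, Case 1 works because for every nonempty proper $X''\subseteq X\setminus\{x\}$ the assumed strict inequality gives $|N_{\Gamma}(X'')|\geq |X''|+1$, so deleting the single vertex $y$ from $Y$ still leaves $|N(X'')|\geq |X''|$; and in Case 2 your bookkeeping through $X''\cup X'$ is exactly the right move, since $N_{\Gamma}(X''\cup X') = N_{\Gamma}(X'')\cup N_{\Gamma}(X')$ and subtracting the tight count $|N_{\Gamma}(X')|=|X'|$ yields at least $|X''|$ neighbours of $X''$ outside $N_{\Gamma}(X')$. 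Both sub-applications of the inductive hypothesis are legitimate because $X'$ is a nonempty \emph{proper} subset, so both $|X'|$ and $|X\setminus X'|$ are strictly smaller than $|X|$, and the two partial matchings use disjoint sets of $Y$-vertices. There is no gap; you have simply proved a result the authors chose to import.
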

		
		Define $T_k = \{I_{{i_1}{i_2} \cdots {i_{k}}} :  i_1, i_2, \ldots, i_k \in [n]\}$  and  $M_k$  (where $k = 1, 2, \ldots, n-2$) be the induced bipartite subgraph of $\mathcal{I}n(S)$ with vertex set $T_k$ and   $T_{k+1}$.
	\begin{lemma}\label{matchinglemma}
		Let $n=2p$ or $n=2p+1$. If $1 \leq k \leq p-1$, then $M_k$ has a matching that covers all the vertices of $T_{k}$. If $p \leq k \leq n-2$ then $M_k$ has a matching that covers all the vertices of $T_{k+1}$.
	\end{lemma}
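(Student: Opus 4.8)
The plan is to recognize $M_k$ as the containment bipartite graph between the $k$-element and $(k+1)$-element subsets of $[n]$, and then to verify Hall's condition on the appropriate side by a double count of edges. By Lemma \ref{union minimal}, every nontrivial left ideal of $S$ is a union of minimal left ideals, so $T_k$ is in bijection with the $k$-subsets of $[n]$, giving $|T_k| = \binom{n}{k}$, and for $I \in T_k$, $J \in T_{k+1}$ we have $I \sim J$ in $M_k$ precisely when $I \subset J$. Two structural degree counts drive the argument: each vertex of $T_k$ is properly contained in exactly $n-k$ vertices of $T_{k+1}$ (adjoining one of the remaining $n-k$ minimal ideals), and each vertex of $T_{k+1}$ properly contains exactly $k+1$ vertices of $T_k$ (deleting one of its $k+1$ minimal ideals).

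For the first case $1 \leq k \leq p-1$, I would cover $T_k$. Fix $X' \subseteq T_k$ and count the edges of $M_k$ joining $X'$ to its neighbourhood $N_{M_k}(X') \subseteq T_{k+1}$. From the $T_k$ side there are exactly $(n-k)|X'|$ such edges, while each vertex of $N_{M_k}(X')$ is incident to at most $k+1$ of them; hence $(n-k)|X'| \leq (k+1)|N_{M_k}(X')|$, so that $|N_{M_k}(X')| \geq \tfrac{n-k}{k+1}|X'|$. Since $k \leq p-1$ gives $k+1 \leq n-k$ (for both $n = 2p$ and $n = 2p+1$), the ratio is at least $1$, whence $|X'| \leq |N_{M_k}(X')|$ and Hall's condition holds for $X = T_k$. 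Lemma \ref{hall theorem} then produces a matching of $M_k$ covering $T_k$.

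For the second case $p \leq k \leq n-2$, I would instead cover $T_{k+1}$ by the symmetric count. Fixing $Y' \subseteq T_{k+1}$, there are $(k+1)|Y'|$ edges from $Y'$ to $N_{M_k}(Y') \subseteq T_k$, and each vertex of $N_{M_k}(Y')$ receives at most $n-k$ of them, giving $|N_{M_k}(Y')| \geq \tfrac{k+1}{n-k}|Y'|$. Here $k \geq p$ forces $n-k \leq k+1$, so again the ratio is at least $1$ and Hall's condition holds for $Y = T_{k+1}$; Lemma \ref{hall theorem} provides the desired matching.

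The only delicate point is checking the two threshold inequalities $k+1 \leq n-k$ (case one) and $n-k \leq k+1$ (case two) exactly at the endpoints $k = p-1$ and $k = p$, treated separately for $n = 2p$ and $n = 2p+1$; these are precisely the comparisons of $\binom{n}{k}$ with $\binom{n}{k+1}$ recorded in Remark \ref{binomial inequality}, including the equality $\binom{n}{p} = \binom{n}{p+1}$ at the boundary $k = p$ when $n = 2p+1$. They also confirm that in each case one is matching the smaller level into the larger, exactly the situation in which Hall's theorem can cover the smaller side completely.
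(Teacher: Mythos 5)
Your proposal is correct and follows essentially the same route as the paper: both arguments exploit the biregularity of $M_k$ (degrees $n-k$ on the $T_k$ side and $k+1$ on the $T_{k+1}$ side), verify Hall's condition by double-counting the edges between a subset and its neighbourhood, and invoke Lemma \ref{hall theorem}; your explicit check that $k+1 \leq n-k$ for $k \leq p-1$ (and the reverse for $k \geq p$) is just a restatement of the degree comparison the paper derives from the identity $(n-k)\binom{n}{k} = (k+1)\binom{n}{k+1}$ together with Remark \ref{binomial inequality}.
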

	\begin{proof}
		First suppose  $1 \leq k \leq p-1$. Then by Lemma \ref{same union not adjacent}, $M_k$ is a bipartite graph with vertex set $T_k$ and $T_{k+1}$. By Remark \ref{binomial inequality}, we have 
		\begin{center}$| T_k |$ = $n \choose k$  $\leq$ $ n \choose k+1$ = $| T_{k+1}|$. \end{center}
		By Lemma \ref{degree}, observe that $M_k$ is a biregular graph in which all vertices in $T_k$ (respectively, in $T_{k+1}$) have the same vertex degree. Therefore, for any $J \in T_k$ and $J' \in T_{k+1}$, we have 
		\begin{center}
			$n-k \choose 1$ $n \choose k$ = $deg_{M_k}(J)  \cdot | T_k |$ = $deg_{M_k}(J')  \cdot | T_{k+1} |$ = $k+1 \choose k$ $n \choose k+1$. 
		\end{center}
		where $deg_{M_k}(J)$ and $deg_{M_k}(J')$ is the degree of $J$ and $J'$ in the induced subgraph $M_k$ of $\mathcal{I}n(S)$. Thus, $deg_{M_k}(J) \geq deg_{M_k}(J')$. Let $T$ be any arbitrary subset of $T_k$ and consider the induced subgraph of $M_k$ with vertex set $T$ and $N_{M_k}(T)$. The number of edges of this graph is $| T | \cdot deg_{M_k}(L) \leq  | N_{M_k}(T) | \cdot deg_{M_k}(L')$, where $L$ and $L'$ are vertices of $T$ and $N_{M_k}(T)$, respectively. Thus, we have $| T | \leq | N_{M_k}(T) |$. Then by Lemma \ref{hall theorem}, $M_k$ has a matching that covers all the vertices of $T_k$.
		
		The proof for $p \leq k \leq n-2$, is similar. Hence, omitted. 
	\end{proof}
	\begin{theorem}
		Let $S$ be a completely simple semigroup with $n$ minimal left ideals, where $n =2p$ or $n=2p+1$. Then $\alpha(\mathcal{I}n(S))$ =$ n \choose p$.
	\end{theorem}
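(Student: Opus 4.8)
The plan is to first translate the statement into the combinatorics of subsets. By Lemma~\ref{union minimal} the vertices of $\mathcal{I}n(S)$ are exactly the ideals $I_A$ indexed by nonempty proper subsets $A \subseteq [n]$, and $I_A \subset I_B$ holds precisely when $A \subsetneq B$; hence $\mathcal{I}n(S)$ is the comparability graph of the inclusion order on these subsets, and an independent set is nothing but an antichain. In this language the assertion $\alpha(\mathcal{I}n(S)) = \binom{n}{p}$ is a Sperner-type statement, and the middle level $T_p$ of all $p$-element subsets will be shown to realize the maximum.

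For the lower bound I would observe that $T_p$ consists of $\binom{n}{p}$ vertices, each a union of exactly $p$ minimal left ideals; since $n = 2p$ or $n = 2p+1$ forces $1 \le p \le n-1$, these are genuine (nontrivial) vertices, and by Lemma~\ref{same union not adjacent} no two of them are adjacent. Thus $T_p$ is an independent set, giving $\alpha(\mathcal{I}n(S)) \ge \binom{n}{p}$.

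The substance lies in the matching upper bound, for which I would construct a partition of $V(\mathcal{I}n(S))$ into exactly $\binom{n}{p}$ chains. Using Lemma~\ref{matchinglemma}, the \emph{upward} matchings $M_1, \ldots, M_{p-1}$ each saturate the lower level, so in their union every vertex of $T_1 \cup \cdots \cup T_{p-1}$ has a unique edge to the level above. Since edges only join consecutive levels, this union has maximum degree two and contains no cycle, hence is a disjoint collection of chains; following the up-edges shows each chain terminates at a distinct vertex of $T_p$, and every vertex of $T_p$ is the top of exactly one such chain, so these chains partition $T_1 \cup \cdots \cup T_p$ into $|T_p|$ pieces. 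Symmetrically, the \emph{downward} matchings $M_p, \ldots, M_{n-2}$ saturate the upper level and partition $T_p \cup \cdots \cup T_{n-1}$ into $|T_p|$ chains, each rooted at a distinct vertex of $T_p$. Gluing, at each $A \in T_p$, its upward chain to its downward chain yields exactly $|T_p| = \binom{n}{p}$ vertex-disjoint chains covering all of $V(\mathcal{I}n(S))$.

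Finally I would conclude that each such chain is a set of pairwise comparable ideals, hence a clique in $\mathcal{I}n(S)$, so any independent set meets it in at most one vertex; therefore $\alpha(\mathcal{I}n(S)) \le \binom{n}{p}$, and combining with the lower bound gives equality. I expect the main obstacle to be the bookkeeping in the gluing step: one must verify not merely that the union of matchings is a disjoint union of chains, but that the number of chains is \emph{exactly} $|T_p|$ rather than only bounded by it. This is precisely where the direction of saturation in Lemma~\ref{matchinglemma} on each side is essential, and where the monotone ordering of the binomial coefficients (Remark~\ref{binomial inequality}) pins down $T_p$ as the common middle level through which every chain passes once.
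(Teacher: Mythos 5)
Your proposal is correct, and it reaches the upper bound by a genuinely different route from the paper. The paper also starts from Lemma \ref{same union not adjacent} for the lower bound and Lemma \ref{matchinglemma} for the upper bound, but instead of building a chain decomposition it takes an \emph{arbitrary} independent set $\mathcal{U}$ and shifts it level by level: using the saturating matchings it defines injections $\phi_k : T_k \to T_{k+1}$ (and dually $\phi'_k$ from above), replaces $\mathcal{U}\cap T_{k}$ by its image, and verifies at each step that the resulting set is still independent and of the same cardinality, until the whole set has been pushed into $T_p$, whence $|\mathcal{U}|\le |T_p|$. Your argument packages the same matchings into a partition of $V(\mathcal{I}n(S))$ into exactly $\binom{n}{p}$ chains through the middle level and then observes that each chain is a clique, so any independent set meets each chain at most once; this is the classical symmetric-chain/clique-cover argument. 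What your version buys is that the delicate part of the paper's proof --- checking that each shifted set $\mathcal{U}_k$ remains independent, which requires a small argument about the matched partners --- collapses to the trivial remark that pairwise comparable ideals form a clique; the price is the bookkeeping you correctly identify, namely that the chain count is \emph{exactly} $|T_p|$, which follows because every vertex strictly below (resp.\ above) $T_p$ has exactly one up-edge (resp.\ down-edge) in the union of the saturating matchings, so every maximal path ends at, and contains exactly one vertex of, $T_p$. Both proofs are sound; yours is arguably the more transparent of the two.
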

	\begin{proof}
		Let $n =2p$ or $n= 2p+1$. By Lemma \ref{same union not adjacent}, note that $T_p$ forms an independent set of $\mathcal{I}n(S)$. Consequently, $\alpha(\mathcal{I}n(S))$ $\geq$ $| T_p |$ = $n \choose p$.	Let $\mathcal{U}$ be an arbitrary independent set of $\mathcal{I}n(S)$. We need to show that $| \mathcal{U} | \leq | T_p |$.
		
		If $1 \leq k \leq p-1$, by  Lemma \ref{matchinglemma}, consider $Q_k$ to be a fixed matching of $M_k$ that covers all the vertices of $T_k$. Assume that  $\phi_{k}$ is a mapping from $T_k$ to $T_{k+1}$ which sends a vertex $J \in T_k$ to a vertex $J'$ of $T_{k+1}$ such that $(J, J')$ is an edge in $Q_k$. Since $Q_k$ is a matching of $M_k$, we get $\phi_{k}$ is a one-one map for any $k$. Now, consider $\mathcal{U}_1$ to be $\mathcal{U}$ and  recursively define $\mathcal{U}_2$, $\mathcal{U}_3$, $\ldots$, $\mathcal{U}_k$, $\ldots$, $\mathcal{U}_p$  for $2 \leq k \leq p$ as follows:
		\begin{center}
			$\mathcal{U}_2 = (\mathcal{U}_1 \setminus({\mathcal{U}_1} \cap T_1)) \cup \phi_{1}(\mathcal{U}_1 \cap T_1)$\\
			$\mathcal{U}_3 = (\mathcal{U}_2 \setminus({\mathcal{U}_2} \cap T_2)) \cup \phi_{2}(\mathcal{U}_2 \cap T_2)$\\
			$\vdots$\\
			$\mathcal{U}_k = (\mathcal{U}_{k-1} \setminus({\mathcal{U}_{k-1}}\cap T_{k-1})) \cup \phi_{k-1}(\mathcal{U}_{k-1} \cap T_{k-1})$\\
			$\vdots$\\
			$\mathcal{U}_p = (\mathcal{U}_{p-1} \setminus({\mathcal{U}_{p-1}} \cap T_{p-1})) \cup \phi_{p-1}(\mathcal{U}_{p-1} \cap T_{p-1})$.
		\end{center}
		Observe that $\mathcal{U}_k \cap (T_1 \cup T_2 \cup \cdots \cup T_{k-1}) = \emptyset$, for any $2 \leq k \leq p$.
		First we show that $\mathcal{U}_k$ is an independent set and $| \mathcal{U}_k| = | \mathcal{U}|$. To do this we proceed by induction on $k$, where $k = 1,2, \ldots, p$. Clearly, for $k = 1$, $\mathcal{U}_1$ is an independent set and $|\mathcal{U}_1| = | \mathcal{U}|$ and assume for $\mathcal{U}_{k-1}$ i.e., $\mathcal{U}_{k-1}$ is an independent set and $|\mathcal{U}_{k-1}|= | \mathcal{U}|$. Now, we prove it for $\mathcal{U}_k$. First we show that $|\mathcal{U}_k| = | \mathcal{U}|$. For this purpose we prove that $\phi_{k-1}(\mathcal{U}_{k-1} \cap T_{k-1})$ and $(\mathcal{U}_{k-1} \setminus({\mathcal{U}_{k-1}}\cap T_{k-1}))$ have no common vertices. Assume that there exists $J \in (\mathcal{U}_{k-1} \setminus({\mathcal{U}_{k-1}} \cap T_{k-1})) \cap \phi_{k-1}(\mathcal{U}_{k-1} \cap T_{k-1})$. Then there is $J' \in (\mathcal{U}_{k-1} \cap T_{k-1})$ such that $(J', J) \in Q_{k-1}$ and $J' \subset J$. Since $J \sim J'$ in $\mathcal{U}_{k-1}$, a contradiction. Hence, 
		\begin{center}
			$(\mathcal{U}_{k-1} \setminus({\mathcal{U}_{k-1}} \cap T_{k-1})) \cap \phi_{k-1}(\mathcal{U}_{k-1} \cap T_{k-1}) = \emptyset$.
		\end{center}
		Thus,  $|\mathcal{U}_k| =| \mathcal{U}_{k-1}| =  | \mathcal{U}|$. Now if  $\mathcal{U}_k$ is not an independent set then for any vertex $J \in \phi_{k-1}(\mathcal{U}_{k-1} \cap T_{k-1})$ and $J' \in (\mathcal{U}_{k-1} \setminus({\mathcal{U}_{k-1}} \cap T_{k-1}))$, $J \sim J'$ and $J \subset J'$. Since $Q_k$ is matching, then there exists $J'' \in \mathcal{U}_{k-1} \cap T_{k-1}$ such that $\phi_{k-1}(J'') = J$ and $J'' \subset J$. Consequently, $J'' \sim J'$ in $\mathcal{U}_{k-1}$, a contradiction. Thus, $\mathcal{U}_k$ is an independent set.
		
		For $p \leq k \leq n-2$. By  Lemma \ref{matchinglemma}, consider $Q'_k$ to be a fixed matching of $M_k$ that covers all the vertices of $T_{k+1}$. Assume that  $\phi'_{k}$ is a mapping from $T_{k+1}$ to $T_{k}$ which sends a vertex $J \in T_{k+1}$ to a vertex $J'$ of $T_{k}$ such that $(J, J')$ is an edge in $Q'_k$. As $Q'_k$ is a matching of $M_k$, so $\phi'_{k}$ is one-one map for any $k$. Now, consider $\mathcal{V}_{n-1}$ to be $\mathcal{U}_p$ and analogously define $\mathcal{V}_{n-2}$, $\mathcal{V}_{n-3}$, $\ldots$,    $\mathcal{V}_{p}$ for $p \leq k \leq n-2$ as follows:
		\begin{center}
			$\mathcal{V}_{n-2} = (\mathcal{V}_{n-1} \setminus({\mathcal{V}_{n-1}} \cap T_{n-1})) \cup \phi'_{n-1}(\mathcal{V}_{n-1} \cap T_{n-1})$
			
			$\vdots$
			
			$\mathcal{V}_{k} = (\mathcal{V}_{k-1} \setminus({\mathcal{V}_{k-1}} \cap T_{k-1})) \cup \phi'_{k-1}(\mathcal{V}_{k-1} \cap T_{k-1})$
			
			$\vdots$
			
			$\mathcal{V}_{p} = (\mathcal{V}_{p-1} \setminus({\mathcal{V}_{p-1}} \cap T_{p-1}))\cup \phi'_{p-1}(\mathcal{V}_{p-1} \cap T_{p-1})$.
		\end{center}
		Note that,  $\mathcal{V}_k \cap (T_{n-1} \cup T_{n-2} \cup \cdots \cup T_{k+1}) = \emptyset$, for any $p \leq k \leq n-2$. Similarly, as shown above  we can prove that $\mathcal{V}_k$ is an independent set and $|\mathcal{V}_k| = | \mathcal{V}_{n-1} | = | \mathcal{U}_p |=  | \mathcal{U}|$ for $k= n-2, n-3, \ldots, p$.
		Since $\mathcal{V}_p \subseteq T_p$, we have $| \mathcal{V}_p | \leq | T_p |$. Consequently, we  have $| \mathcal{U} | \leq | T_p |$ = $n \choose p$. Hence, $\alpha(\mathcal{I}n(S))$ =$ n \choose p$.
	\end{proof}

	\begin{corollary}
		Let $S$ be a completely simple semigroup with $n$ minimal left ideals, where $n =2p$ or $n = 2p+1$. Then the vertex covering number is $(2^n-2)-$ $ n \choose p$.
	\end{corollary}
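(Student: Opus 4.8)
The plan is to obtain the vertex covering number directly from the independence number by means of the complementarity identity recalled in the preliminaries. First I would note that for every graph $\Gamma$ one has $\alpha(\Gamma) + \beta(\Gamma) = |V(\Gamma)|$, an identity that holds unconditionally; in contrast to the edge-cover identity $\alpha'(\Gamma) + \beta'(\Gamma) = |V(\Gamma)|$, it requires no hypothesis ruling out isolated vertices. Hence determining $\beta(\mathcal{I}n(S))$ reduces to substituting the order and the independence number of $\mathcal{I}n(S)$, both of which are already available from the present section.

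Next I would invoke Theorem \ref{order}, which gives $|V(\mathcal{I}n(S))| = 2^n - 2$, together with the preceding theorem, which establishes $\alpha(\mathcal{I}n(S)) = \binom{n}{p}$ for $n = 2p$ or $n = 2p+1$. Substituting these into the identity yields
\[
\beta(\mathcal{I}n(S)) = |V(\mathcal{I}n(S))| - \alpha(\mathcal{I}n(S)) = (2^n - 2) - \binom{n}{p},
\]
which is exactly the asserted formula, valid in both parity cases simultaneously.

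I do not anticipate any genuine obstacle here: the corollary is an immediate bookkeeping consequence of the order formula, the independence number computation, and the standard relation $\alpha + \beta = |V|$. The only point deserving a moment's attention is to confirm that the identity is applied in its unconditional form, so that the single expression $(2^n-2) - \binom{n}{p}$ accounts for both $n$ even and $n$ odd with no separate case analysis.
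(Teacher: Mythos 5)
Your argument is correct and is exactly the route the paper intends: the corollary follows immediately from the Gallai identity $\alpha(\Gamma)+\beta(\Gamma)=|V(\Gamma)|$ recalled in the preliminaries, combined with Theorem \ref{order} and the independence number computation. The paper omits the proof precisely because it is this one-line substitution, so there is nothing to add.
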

	
	\begin{theorem}\label{perfectmatchinginclusion}
		Let $S$ be a completely simple semigroup with $n$ minimal left ideals. Then $\mathcal{I}n(S)$ has a perfect matching.
	\end{theorem}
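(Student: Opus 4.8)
The plan is to realize $\mathcal{I}n(S)$ as a comparability graph on subsets and then exhibit an explicit perfect matching by toggling one coordinate. By Lemma \ref{union minimal} together with Remark \ref{disjoint intersection minimal}, every nontrivial left ideal of $S$ has the form $I_A := \bigcup_{i \in A} I_i$ for a unique nonempty proper subset $A$ of $[n]$, and since distinct minimal left ideals are disjoint, $I_A \subset I_B$ holds precisely when $A \subsetneq B$. Hence $\mathcal{I}n(S)$ is isomorphic to the comparability graph of the poset of all nonempty proper subsets of $[n]$ ordered by inclusion; its order is $2^n - 2$ by Theorem \ref{order}, which is even. For $n = 2$ the graph is the edgeless graph on the two incomparable vertices $I_{\{1\}}, I_{\{2\}}$ and has no perfect matching, so throughout I take $n \geq 3$, the connected case isolated in the opening lemma of this section.

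First I would write down the near-matching obtained by toggling the element $n$: for every nonempty $A \subseteq \{1, \ldots, n-1\}$ with $A \neq \{1, \ldots, n-1\}$, pair $I_A$ with $I_{A \cup \{n\}}$. Each such pair is an edge since $A \subsetneq A \cup \{n\}$, and $A \cup \{n\}$ is a proper subset of $[n]$ exactly because we excluded $A = \{1,\ldots,n-1\}$. A straightforward bookkeeping, splitting vertices according to whether they contain $n$, shows that these pairs are vertex-disjoint and cover every vertex except the two ``extreme'' ones $I_{\{n\}}$ and $I_{\{1,\ldots,n-1\}}$ (the former has empty trace on $\{1,\ldots,n-1\}$, the latter has full trace).

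The main obstacle is precisely that these two leftover vertices are \emph{not} adjacent: neither $\{n\} \subseteq \{1,\ldots,n-1\}$ nor the reverse inclusion holds. To repair this I would reroute along a short augmenting path through $I_{\{1\}}$ and $I_{\{1,n\}}$: replace the single pair $\{I_{\{1\}},\, I_{\{1,n\}}\}$ by the two pairs $\{I_{\{n\}},\, I_{\{1,n\}}\}$ and $\{I_{\{1\}},\, I_{\{1,\ldots,n-1\}}\}$. Both new pairs are edges because $\{n\} \subset \{1,n\}$ and $\{1\} \subset \{1,\ldots,n-1\}$, the latter requiring $n \geq 3$ so that $\{1\} \neq \{1,\ldots,n-1\}$ and all four ideals involved are genuinely distinct vertices. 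The resulting collection of pairs is then easily verified to be a perfect matching: every vertex lies in exactly one pair and every pair is an edge. I expect the only delicate points to be the clean case-check that the toggling pairs miss exactly $I_{\{n\}}$ and $I_{\{1,\ldots,n-1\}}$, and the verification that the rerouting genuinely covers these two vertices for all $n \geq 3$.
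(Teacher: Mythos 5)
Your construction is correct, and it is a genuinely different (and cleaner) route than the paper's. The paper argues by parity: for $n=2p+1$ it pairs each ideal supported on a $k$-set with one supported on a comparable $(n-k)$-set, and for $n=2p$ it builds the matching level by level via a chain of injections $T_k\setminus \mathrm{im}(\phi_{k-1})\to T_{k+1}$ together with the extra edge $(I_1, I_{12\cdots(n-1)})$; both cases implicitly lean on the existence of cross-level matchings in the Boolean lattice (the Hall-type Lemma \ref{matchinglemma}). You instead give a single explicit matching valid for all $n\ge 3$: toggle the fixed coordinate $n$, which pairs $I_A$ with $I_{A\cup\{n\}}$ and misses exactly the two incomparable vertices $I_{\{n\}}$ and $I_{[n-1]}$, then repair with the augmenting path $I_{\{n\}}\sim I_{\{1,n\}}\sim I_{\{1\}}\sim I_{[n-1]}$. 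This avoids the parity split and any appeal to Hall's theorem, and all verifications are immediate from the identification of $\mathcal{I}n(S)$ with the comparability graph of nonempty proper subsets of $[n]$ (Lemma \ref{union minimal} plus Remark \ref{disjoint intersection minimal}). One further point in your favor: your exclusion of $n=2$ is not a cop-out but a genuine correction --- for $n=2$ the graph is the edgeless graph on two vertices and has no perfect matching, so the theorem as stated needs the hypothesis $n\ge 3$ (or $n\neq 2$), which the paper's proof silently glosses over.
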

	\begin{proof}
		In view of Theorem \ref{order}, it is sufficient to provide a matching of size $2^{n-1}-1$. We have the following cases.
		
		\noindent\textbf{Case 1.} $n = 2p+1$. By Lemma \ref{same union not adjacent}, note that the set of edges \begin{center}
			$M = \{(I_{{i_1}{i_2}\cdots{i_k}}, I_{{j_1}{j_2}\cdots{j_{n-k}})}  : i_1, i_2, \cdots, i_k, j_1, j_2, \cdots, j_{n-k}\in [n] \}$
		\end{center}
		forms a matching and $2|M| = |V(\mathcal{I}n(S))|$. Thus, $|M| = \frac{|V(\mathcal{I}n(S))|}{2}$. Consequently, $\mathcal{I}n(S)$ has a perfect matching.\\
		\noindent\textbf{Case 2.} $n = 2p$. Consider 
		\begin{center}
			$T_1 = \{I_{i_1} : i_1 \in [n] \}$
			
			$T_2 = \{I_{{i_1}{i_2}} : i_1, i_2 \in [n] \}$
			
			$\vdots$
			
			$T_k = \{I_{{i_1}{i_2}\cdots{i_k}} : i_1, i_2, \cdots, i_k \in [n] \}$
			
			$\vdots$
			
			$T_{n-1} = \{I_{{i_1}{i_2}\cdots{i_{n-1}}} : i_1, i_2, \cdots, i_{n-1} \in [n] \}$
		\end{center}
		Note that $T_1, T_2, \ldots, T_{n-1}$  forms a partition of $V(\mathcal{I}n(S))$. Consider the following injective maps
		\begin{center}
			$\phi_{1}: T_1 \setminus \{I_1\} \rightarrow T_2$\\
			$\phi_{2}: T_2 \setminus im(\phi_{1}) \rightarrow T_3$\\
			$\vdots$\\
			$\phi_{n-2}: T_{n-2} \setminus im(\phi_{n-3}) \rightarrow T_{n-1} \setminus \{I_{12 \cdots {(n-1)}}\}$.
		\end{center}
		under the assignment  $J \mapsto J'$ such that $(J, J')$ is an edge in $\mathcal{I}n(S)$.
		The set $M = \{(I_1, I_{12\cdots {(n-1)}})\} \cup \{(\alpha_1, \phi_{1}(\alpha_1)) : \alpha_1 \in dom(\phi_{1})\} \cup \{(\alpha_2, \phi_{2}(\alpha_2)) : \alpha_2 \in dom(\phi_{2})\} \cup \{(\alpha_3, \phi_{3}(\alpha_3)) : \alpha_3 \in dom(\phi_{3})\} \cup \cdots \cup \{(\alpha_{n-2}, \phi_{n-2}(\alpha_{n-2})) : \alpha_{n-2} \in dom(\phi_{n-2})\}$ forms a matching and no edge in $M$ share same end vertices. In the above, by $im(\phi_{i})$ and $dom(\phi_{i})$, we mean the image set and domain set of $\phi_{i}$ respectively. Further, note that $2|M| = |V(\mathcal{I}n(S))|$. Thus, $|M| = \frac{|V(\mathcal{I}n(S))|}{2}$. Consequently, $\mathcal{I}n(S)$ has a perfect matching.
	\end{proof}

	\begin{corollary}
		Let $S$ be a completely simple semigroup with $n$ minimal left ideals. Then the edge covering number is $2^{n-1}-1$.
	\end{corollary}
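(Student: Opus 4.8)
The plan is to read off $\beta'(\mathcal{I}n(S))$ from the Gallai-type identity $\alpha'(\Gamma) + \beta'(\Gamma) = |V(\Gamma)|$, valid for any graph without isolated vertices, which is recalled in Section~2. Both ingredients on the right-hand side are already determined by earlier results, so the corollary reduces to a short bookkeeping argument rather than any genuinely new combinatorics.

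First I would invoke Theorem~\ref{perfectmatchinginclusion}, which produces a perfect matching of $\mathcal{I}n(S)$. Since a matching always has at most $\tfrac{1}{2}|V(\Gamma)|$ edges and a perfect matching saturates every vertex, the existence of such a matching forces $\alpha'(\mathcal{I}n(S)) = \tfrac{1}{2}|V(\mathcal{I}n(S))|$. By Theorem~\ref{order} the order of the graph is $2^n-2$, and hence $\alpha'(\mathcal{I}n(S)) = \tfrac{2^n-2}{2} = 2^{n-1}-1$.

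Finally, substituting this value and $|V(\mathcal{I}n(S))| = 2^n-2$ into the identity yields $\beta'(\mathcal{I}n(S)) = |V(\mathcal{I}n(S))| - \alpha'(\mathcal{I}n(S)) = (2^n-2)-(2^{n-1}-1) = 2^{n-1}-1$, which is the claim. The only subtlety worth checking is that the identity requires $\mathcal{I}n(S)$ to have no isolated vertices; this is guaranteed in the connected range (so the statement is read under $n \geq 3$, the case $n=2$ giving a null graph for which an edge cover does not exist), and the perfect matching of Theorem~\ref{perfectmatchinginclusion} certifies it. I do not anticipate any step here to be a real obstacle, since the whole argument is a one-line deduction from the two cited theorems.
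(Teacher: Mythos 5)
Your proposal is correct and is exactly the argument the paper intends: the corollary is stated immediately after Theorem \ref{perfectmatchinginclusion} precisely so that the Gallai identity $\alpha'(\Gamma)+\beta'(\Gamma)=|V(\Gamma)|$ (recalled in Section 2) combined with $|V(\mathcal{I}n(S))|=2^n-2$ and the perfect matching gives $\beta'(\mathcal{I}n(S))=2^{n-1}-1$. Your remark about the $n=2$ null-graph case needing to be excluded is a valid caveat that the paper itself glosses over.
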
	
	
	%%%%%%%%%%%%%%%%%%%%%%%%%%%%%%%%%%%%%%%%%%%%%%%%%%%%%%%%%%%%%%%%%%%%%%%%%%%%%%%%%%%%%%%%%%
	\section{Automorphism group}
An automorphism of a graph $\Gamma$  is a permutation $f$ on $V (\Gamma)$ with the property that, for any vertices $u$ and $v$, we
	have $uf \sim vf$ if and only if $u \sim v$. The set $Aut(\Gamma)$ of all graph automorphisms of a graph $\Gamma$ forms a group with
	respect to composition of mappings. A graph $\Gamma$ is \emph{vertex transitive} if for every two vertices $u$ and $v$ there exists a graph automorphism $f$ such that $uf = v$. The symmetric group of degree $n$ is denoted by $S_n$. In order to study algebraic properties of $\mathcal{I}n(S)$, where $S$ is completely simple semigroup, we obtained the automorphism group of $\mathcal{I}n(S)$. For a completely simple semigroup $S$ having two minimal left ideals, 
	$\mathcal{I}n(S)$ is disconnected (cf. Theorem \ref{two minimal}). It follows that    $Aut(\mathcal{I}n(S)) \cong \mathbb{Z}_2$. Now in the remaining section, we find the automorphism group of the inclusion ideal graph of completely simple semigroup having at least three minimal left ideals. In view of Lemma \ref{degree}, we have the following remark.
	
	\begin{remark}\label{degree k}
	In $\mathcal{I}n(S)$, $deg(I_{{i_1}{i_2}\cdots {i_k}}) = deg(I_{{j_1}{j_2}\cdots {j_{n-k}}}) = deg(I_{{j_1}{j_2}\cdots {j_{k}}}).$	
	\end{remark}
	
\begin{lemma}\label{symmetric group}
		For $\sigma \in S_n$, let $\phi_{\sigma} : V(\mathcal{I}n(S)) \rightarrow V(\mathcal{I}n(S))$ defined by $\phi_{\sigma}(I_{{i_1}{i_2}\cdots {i_k}}) = I_{\sigma({i_1})\sigma({i_2})\cdots \sigma({i_k})}$. Then $\phi_{\sigma} \in Aut(\mathcal{I}n(S))$.
	\end{lemma}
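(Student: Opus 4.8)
The plan is to show that $\phi_\sigma$ is a well-defined bijection on $V(\mathcal{I}n(S))$ that preserves adjacency in both directions. First I would check that $\phi_\sigma$ is well-defined. Since $\sigma \in S_n$ is a permutation of $[n]$, the set $\{\sigma(i_1), \sigma(i_2), \ldots, \sigma(i_k)\}$ has exactly $k$ distinct elements whenever $\{i_1, i_2, \ldots, i_k\}$ does, so $I_{\sigma(i_1)\sigma(i_2)\cdots\sigma(i_k)} = I_{\sigma(i_1)} \cup I_{\sigma(i_2)} \cup \cdots \cup I_{\sigma(i_k)}$ is again a union of $k$ distinct minimal left ideals, hence a genuine nontrivial left ideal of $S$ by Lemma \ref{union minimal}, and the image does not depend on the order in which the indices are listed. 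Thus $\phi_\sigma$ maps $V(\mathcal{I}n(S))$ into itself.

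Next I would establish bijectivity. The cleanest route is to observe that $\phi_{\sigma^{-1}}$ is a two-sided inverse of $\phi_\sigma$: applying $\phi_{\sigma^{-1}}$ after $\phi_\sigma$ sends $I_{i_1 i_2 \cdots i_k}$ to $I_{\sigma^{-1}(\sigma(i_1))\cdots\sigma^{-1}(\sigma(i_k))} = I_{i_1 i_2 \cdots i_k}$, and symmetrically in the other order. Hence $\phi_\sigma \circ \phi_{\sigma^{-1}} = \phi_{\sigma^{-1}} \circ \phi_\sigma = \mathrm{id}$, so $\phi_\sigma$ is a bijection on $V(\mathcal{I}n(S))$.

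For the adjacency condition, I would use the description of vertices as sets of indices. Two distinct left ideals $I_{i_1\cdots i_k}$ and $I_{j_1\cdots j_m}$ are adjacent in $\mathcal{I}n(S)$ if and only if one is contained in the other, which by Remark \ref{disjoint intersection minimal} happens exactly when $\{i_1,\ldots,i_k\} \subsetneq \{j_1,\ldots,j_m\}$ or $\{j_1,\ldots,j_m\} \subsetneq \{i_1,\ldots,i_k\}$. Since $\sigma$ is a bijection of $[n]$, it preserves the proper inclusion relation between index sets: $\{i_1,\ldots,i_k\} \subset \{j_1,\ldots,j_m\}$ if and only if $\sigma(\{i_1,\ldots,i_k\}) \subset \sigma(\{j_1,\ldots,j_m\})$. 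Therefore $I_{i_1\cdots i_k} \sim I_{j_1\cdots j_m}$ if and only if $\phi_\sigma(I_{i_1\cdots i_k}) \sim \phi_\sigma(I_{j_1\cdots j_m})$, which is precisely the automorphism condition. Combining well-definedness, bijectivity, and two-way adjacency preservation gives $\phi_\sigma \in Aut(\mathcal{I}n(S))$.

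The argument is essentially routine once one reduces containment of left ideals to inclusion of their index sets; the only point requiring a little care is the reduction itself, which relies on the fact that in a completely simple semigroup every nontrivial left ideal is uniquely a union of minimal left ideals (Lemma \ref{union minimal}) together with the disjointness of distinct minimal left ideals (Remark \ref{disjoint intersection minimal}). This is what guarantees that the index-set representation of a vertex is unambiguous and that proper containment of ideals corresponds exactly to proper containment of index sets, so I would make sure to invoke these two results explicitly rather than treating the correspondence as obvious.
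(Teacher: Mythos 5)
Your proof is correct and follows essentially the same route as the paper's: reduce adjacency of vertices to proper containment of their index sets and observe that a permutation $\sigma$ of $[n]$ preserves such containments in both directions. You are in fact somewhat more careful than the paper, which omits the explicit checks of well-definedness and bijectivity (the latter via $\phi_{\sigma^{-1}}$) that you supply.
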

	\begin{proof}
Let $I_{{i_1}{i_2}\cdots {i_t}}$ and $I_{{j_1}{j_2}\cdots {j_k}}$ be arbitrary vertices of $\mathcal{I}n(S)$ such that $I_{{i_1}{i_2}\cdots {i_t}} \sim I_{{j_1}{j_2}\cdots {j_k}}$. Without loss of generality, assume that $I_{{i_1}{i_2}\cdots {i_t}} \subset I_{{j_1}{j_2}\cdots {j_k}}$. This implies that $I_{i_1}, I_{i_2}, \ldots, I_{i_t} \subset I_{{j_1}{j_2}\cdots {j_k}}$.
Now 
\begin{align*}
I_{{i_1}{i_2}\cdots {i_t}} \sim I_{{j_1}{j_2}\cdots {j_k}} 
&\Longleftrightarrow I_{\sigma({i_1})\sigma({i_2})\cdots \sigma({i_t})} \sim I_{\sigma({j_1})\sigma({j_2})\cdots \sigma({j_k})}\\
& \Longleftrightarrow \phi_{\sigma}(I_{{i_1}{i_2}\cdots {i_t}}) \sim \phi_{\sigma}(I_{{j_1}{j_2}\cdots {j_k}}).
\end{align*}
Thus, $\phi_{\sigma} \in Aut(\mathcal{I}n(S))$.
	\end{proof}
	
	\begin{lemma}\label{automorphism 1 to n}
		Let $f \in Aut(\mathcal{I}n(S))$ such that  $f(I_{i_s}) = I_{{j_1}{j_2}\cdots{j_{n-1}}}$ for some $i_s \in [n]$. Then $f(I_{{i_1}{i_2}\cdots{i_{k}}}) =  I_{{i_1^{'}}{i_2^{'}}\cdots{i_{n-k}^{'}}}$ for all $I_{{i_1}{i_2}\cdots{i_{k}}} \in V(\mathcal{I}n(S))$.
	\end{lemma}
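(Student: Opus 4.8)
The plan is to show that $f$ \emph{reverses levels}, where I call a left ideal $I_{{i_1}{i_2}\cdots{i_k}}$ a vertex of \emph{level} $k$, so that $T_k$ is precisely the set of level-$k$ vertices. By Lemma~\ref{degree} a level-$k$ vertex has degree $2^k + 2^{n-k} - 4$, which is invariant under $k \mapsto n-k$; this is exactly the content of Remark~\ref{degree k}, and it tells us that degree considerations alone can only pin down a vertex up to the ambiguity $k \leftrightarrow n-k$. Consequently the single hypothesis $f(I_{i_s}) \in T_{n-1}$ must be leveraged to break this symmetry. I would therefore first prove the base case $f(T_1) = T_{n-1}$, and then propagate it to all levels by a neighbour-counting argument.

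For the base case, observe that $t \mapsto 2^t + 2^{n-t} - 4$ is strictly convex and symmetric about $t = n/2$, so for $n \geq 4$ its maximum over $\{1, 2, \ldots, n-1\}$ is attained exactly at $t = 1$ and $t = n-1$; hence $T_1 \cup T_{n-1}$ is precisely the set of vertices of maximum degree (for $n = 3$ every vertex has degree $2$ and $T_1 \cup T_{n-1}$ is already the whole vertex set). Since $f$ preserves degrees, $f(T_1 \cup T_{n-1}) = T_1 \cup T_{n-1}$. Now let $H$ be the subgraph of $\mathcal{I}n(S)$ induced by $T_1 \cup T_{n-1}$. No two distinct singletons are comparable and no two distinct co-singletons are comparable, so $H$ is bipartite with parts $T_1$ and $T_{n-1}$; moreover, using that $I_x \subset I_{{j_1}{j_2}\cdots{j_{n-1}}}$ exactly when $x \notin \{j_1,\ldots,j_{n-1}\}^{c}$, each singleton is adjacent in $H$ to all but one co-singleton, from which connectedness of $H$ for $n \geq 3$ follows. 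A connected bipartite graph has a unique bipartition, so the automorphism $f|_H$ either fixes $T_1$ and $T_{n-1}$ setwise or interchanges them; since $f(I_{i_s}) \in T_{n-1}$ while $I_{i_s} \in T_1$, it must interchange them, giving $f(T_1) = T_{n-1}$ and $f(T_{n-1}) = T_1$.

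To propagate this to all levels, I would count neighbours inside $T_1$. A level-$k$ vertex $A$ is adjacent to exactly the singletons contained in $A$, so it has $k$ neighbours in $T_1$ when $k \geq 2$ and none when $k = 1$; dually, a level-$m$ vertex $B$ is adjacent to exactly the co-singletons containing $B$, giving $n-m$ neighbours in $T_{n-1}$ when $m \leq n-2$ and none when $m = n-1$. As $f$ is an automorphism with $f(T_1) = T_{n-1}$, it carries the $T_1$-neighbours of $A$ bijectively onto the $T_{n-1}$-neighbours of $f(A)$, so these two counts coincide. Writing $k$ for the level of $A$ and $m$ for the level of $f(A)$: the case $k = 1$ forces the $T_{n-1}$-neighbour count of $f(A)$ to be $0$, hence $m = n-1$; and the case $2 \leq k \leq n-1$ forces $n-m = k$, hence $m = n-k$. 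In every case $f(A)$ has level $n-k$, which is precisely the asserted conclusion $f(I_{{i_1}{i_2}\cdots{i_{k}}}) = I_{{i_1^{'}}{i_2^{'}}\cdots{i_{n-k}^{'}}}$.

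The main obstacle is the base case, not the propagation. Because of the degree symmetry in Remark~\ref{degree k}, nothing purely numerical can decide whether $f$ fixes $T_1$ or moves it onto $T_{n-1}$; the crux is the structural observation that the subgraph induced on $T_1 \cup T_{n-1}$ is connected and bipartite, which upgrades the local hypothesis $f(I_{i_s}) \in T_{n-1}$ into the global equality $f(T_1) = T_{n-1}$. Once that is secured, the neighbour-counting step is routine.
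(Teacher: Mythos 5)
Your proof is correct, and it takes a genuinely different route from the paper's. The paper also starts from degree considerations (Lemma \ref{degree} and Remark \ref{degree k}) to conclude that the image of a singleton lies in $T_1\cup T_{n-1}$, but it then rules out $f(I_{j_1})=I_{j_\ell}$ by an explicit chain of adjacency/non-adjacency contradictions, one singleton at a time; and for the propagation step it pins down $f(I_{i_1i_2\cdots i_k})$ element by element, producing for each $i_j$ in the index set a minimal ideal $I_{\ell_j}$ excluded from the image and for each index outside it a minimal ideal $I_{i_j'}$ contained in the image, so that the image is forced to be exactly an $(n-k)$-fold union. You replace the first half by the observation that $T_1\cup T_{n-1}$ is exactly the set of maximum-degree vertices (strict convexity of $t\mapsto 2^t+2^{n-t}$) and that the induced subgraph on it is $K_{n,n}$ minus a perfect matching, hence connected and bipartite with a unique bipartition, so $f$ must swap $T_1$ and $T_{n-1}$ globally once it moves one singleton; and you replace the second half by counting $T_1$-neighbours of a level-$k$ vertex against $T_{n-1}$-neighbours of its image. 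Both arguments are sound (the needed facts --- that $T_k$ is an independent set, that a level-$k$ vertex has exactly $k$ neighbours in $T_1$ for $k\geq 2$ and $n-k$ neighbours in $T_{n-1}$ for $k\leq n-2$ --- follow from Lemma \ref{same union not adjacent} and the lattice structure of the ideals). What your approach buys is brevity and conceptual clarity: the unique-bipartition trick packages the paper's case analysis into one standard fact, and the neighbour count is a cleaner invariant than tracking individual containments; what the paper's version buys is that it identifies the image set explicitly (as the union of the complements of the $f(I_{i_j})$), which is then reused in the proof of Proposition \ref{a and S}.
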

	\begin{proof}
		Suppose $f(I_{i_s})$ = $I_{{j_1}{j_2}\cdots{j_{n-1}}}$. Since $I_{j_1}$ $\sim I_{{j_1}{j_2}\cdots{j_{n-1}}}$, $I_{j_2}$ $\sim  I_{{j_1}{j_2}\cdots{j_{n-1}}}$, $\ldots$, $I_{j_{n-1}}$ $ \sim I_{{j_1}{j_2}\cdots{j_{n-1}}}$. It follows that $f(I_{j_1})$ $ \sim$ $f(I_{{j_1}{j_2}\cdots{j_{n-1}}})$, $f(I_{j_2}) \sim  f(I_{{j_1}{j_2}\cdots{j_{n-1}}})$, $\ldots$, $f(I_{j_{n-1}}) \sim f(I_{{j_1}{j_2}\cdots{j_{n-1}}})$. By Lemma \ref{degree} and Remark \ref{degree k}, we get either $f(I_{j_1}) = I_{j_\ell}$ or $f(I_{j_1}) = I_{{j_1^{'}}{j_2^{'}}\cdots{j_{n-1}^{'}}}
		$. First suppose that $f(I_{j_1}) = I_{j_\ell}$. Then by Remark \ref{degree k} either $f(I_{{j_1}{j_2}\cdots{j_{n-1}}}) = I_{i_p}$ or $f(I_{{j_1}{j_2}\cdots{j_{n-1}}}) =I_{{\ell_1}{\ell_2}\cdots{\ell_{n-1}}}$. For $f(I_{{j_1}{j_2}\cdots{j_{n-1}}}) = I_{i_p}$ since $I_{j_1} \sim I_{{j_1}{j_2}\cdots{j_{n-1}}}$, we get $I_{j_\ell} = f(I_{j_1}) \sim f(I_{{j_1}{j_2}\cdots{j_{n-1}}}) = I_{i_p}$, a contradiction (see Lemma \ref{same union not adjacent}). Consequently, $f(I_{{j_1}{j_2}\cdots{j_{n-1}}}) =I_{{\ell_1}{\ell_2}\cdots{\ell_{n-1}}}$. Now if $I_{i_s} \sim I_{{j_1}{j_2}\cdots{j_{n-1}}}$ then $I_{{j_1}{j_2}\cdots{j_{n-1}}} = f(I_{i_s}) \sim f(I_{{j_1}{j_2}\cdots{j_{n-1}}}) = I_{{\ell_1}{\ell_2}\cdots{\ell_{n-1}}}$, which is not possible. If
		$I_{i_s} \nsim I_{{j_1}{j_2}\cdots{j_{n-1}}}$ and $I_{i_s} \neq {I_{j_\ell}}$ then $I_{j_1} \nsim I_{i_s}$ implies that $f(I_{j_1}) \nsim f(I_{i_s})$. Consequently, $I_{j_\ell} \nsim I_{{j_1}{j_2}\cdots{j_{n-1}}}$, a contradiction since $j_\ell \in \{j_1, j_2, \ldots, j_{n-1}\}$. We may now suppose that  $I_{j_\ell} = I_{i_s}$. Since $I_{j_2} \sim I_{{j_1}{j_2}\cdots{j_{n-1}}}$, it follows that $f(I_{j_2}) \sim f(I_{{j_1}{j_2}\cdots{j_{n-1}}})$. If $f(I_{j_2}) = I_{i_t}$, since $I_{j_2} \nsim I_{i_s}$ this implies that  $f(I_{j_2}) \nsim f(I_{i_s})$, which is not possible since $i_t \in \{j_1, j_2, \ldots, j_{n-1}\}$. Now if $f(I_{j_2}) = I_{{i_1^{'}}{i_2^{'}}\cdots{i_{n-1}^{'}}}$. Since $I_{j_1} \nsim I_{j_2}$ this implies that $f(I_{j_1}) \nsim f(I_{j_2})$. It follows that $I_{i_s} \nsim I_{{i_1^{'}}{i_2^{'}}\cdots{i_{n-1}^{'}}}$. Since $j_1, j_2, \ldots, j_{n-1} \in [n] \setminus \{i_s\}$. This implies that $j_1, j_2, \ldots, j_{n-1} \in \{i_1^{'}, i_2^{'}, \ldots, i_{n-1}^{'}\}$. Thus $f(I_{i_s}) = f(I_{j_2})$, a contradiction. Therefore, for any $j_1 \in [n]$, $f(I_{j_1}) = I_{{j_1^{'}}{j_2^{'}}\cdots{j_{n-1}^{'}}}$ where $j_1^{'}, j_2^{'}, \ldots, j_{n-1}^{'} \in [n]$.
		
		Next we show that $f(I_{{i_1}{i_2}\cdots{i_{k}}}) = I_{{i_1^{'}}{i_2^{'}}\cdots{i_{n-k}^{'}}}$ for $1 < k \leq n-1$. Since $I_{i_1} \sim I_{{i_1}{i_2}\cdots{i_{k}}}$, $I_{i_2} \sim I_{{i_1}{i_2}\cdots{i_{k}}}$,  $\ldots$, $I_{i_k} \sim I_{{i_1}{i_2}\cdots{i_{k}}}$. This implies that $f(I_{i_1}) \sim f(I_{{i_1}{i_2}\cdots{i_{k}}})$, $f(I_{i_2}) \sim f(I_{{i_1}{i_2}\cdots{i_{k}}})$,  $\ldots$, $f(I_{i_k}) \sim f(I_{{i_1}{i_2}\cdots{i_{k}}})$. Either $f(I_{i_1}) \subset f(I_{{i_1}{i_2}\cdots{i_{k}}})$ or   $f(I_{{i_1}{i_2}\cdots{i_{k}}}) \subset f(I_{i_1})$. Since  $f(I_{i_1}) = I_{{j_1}{j_2}\cdots{j_{n-1}}}$ for some $j_1, j_2, \ldots, j_{n-1} \in [n]$. It follows that $f(I_{i_1}) \not \subset f(I_{{i_1}{i_2}\cdots{i_{k}}})$. Consequently, we get $f(I_{{i_1}{i_2}\cdots{i_{k}}}) \subset f(I_{i_1})$. Thus, there exists $I_{\ell_1} \not \subset f(I_{i_1})$ such that $I_{\ell_1} \not \subset f(I_{{i_1}{i_2}\cdots{i_{k}}})$. Similarly, one can get $f(I_{{i_1}{i_2}\cdots{i_{k}}}) \subset f(I_{i_2})$, $f(I_{{i_1}{i_2}\cdots{i_{k}}}) \subset f(I_{i_3})$, $\ldots$, $f(I_{{i_1}{i_2}\cdots{i_{k}}}) \subset f(I_{i_k})$. Consequently, there exist  $I_{\ell_2} \not \subset f(I_{i_2})$,  $I_{\ell_3} \not \subset f(I_{i_3})$, $\ldots$,  $I_{\ell_k} \not \subset f(I_{i_k})$ such that $I_{\ell_2}, I_{\ell_3}, \ldots, I_{\ell_k} \not \subset f(I_{{i_1}{i_2}\cdots{i_{k}}})$. Thus, $I_{{\ell_1}{\ell_2}\cdots{\ell_k}} \not \subset f(I_{{i_1}{i_2}\cdots{i_{k}}})$. Clearly, there exist $i_{k+1}, i_{k+2}, \ldots, i_{n-k} \notin \{i_1, i_2, \ldots, i_k\}$ such that $I_{i_{k+1}}, I_{i_{k+2}}, \ldots, I_{i_{n-k}} \nsim I_{{i_1}{i_2}\cdots{i_{k}}}$. It follows that $f(I_{i_{k+1}}) \nsim f(I_{{i_1}{i_2}\cdots{i_{k}}})$, $f(I_{i_{k+2}}) \nsim f(I_{{i_1}{i_2}\cdots{i_{k}}})$, $\ldots$, $f(I_{i_{n-k}}) \nsim f(I_{{i_1}{i_2}\cdots{i_{k}}})$ so that $f(I_{{i_1}{i_2}\cdots{i_{k}}}) \not \subset f(I_{i_{k+1}})$, $f(I_{{i_1}{i_2}\cdots{i_{k}}}) \not \subset f(I_{i_{k+2}})$, $\ldots$, $f(I_{{i_1}{i_2}\cdots{i_{k}}}) \not \subset f(I_{i_{n-k}})$. Thus, there exists $I_{{i_1^{'}}} \not \subset f(I_{i_{{k+1}}})$ such that $I_{{i_1^{'}}} \subset f(I_{{i_1}{i_2}\cdots{i_{k}}})$. Similarly,  there exist $I_{{i_2^{'}}} \subset f(I_{{i_1}{i_2}\cdots{i_{k}}})$, $I_{{i_3^{'}}} \subset f(I_{{i_1}{i_2}\cdots{i_{k}}})$, $\ldots$, $I_{{i_{n-k}^{'}}} \subset f(I_{{i_1}{i_2}\cdots{i_{k}}})$ such that $I_{{i_2^{'}}}, I_{{i_3^{'}}}, \ldots, I_{{i_{n-k}^{'}}} \subset f(I_{{i_1}{i_2}\cdots{i_{k}}})$. It follows that $I_{{i_1^{'}}{i_2^{'}}\cdots{i_{n-k}^{'}}} \subset f(I_{{i_1}{i_2}\cdots{i_{k}}})$ and $I_{{\ell_1}{\ell_2}\cdots{\ell_k}} \not \subset f(I_{{i_1}{i_2}\cdots{i_{k}}})$. Thus, $f(I_{{i_1}{i_2}\cdots{i_{k}}}) =  I_{{i_1^{'}}{i_2^{'}}\cdots{i_{n-k}^{'}}}$.
		\end{proof}
	\begin{lemma}\label{alpha}
		Let  $\alpha : V(\mathcal{I}n(S)) \rightarrow V(\mathcal{I}n(S))$ be a mapping defined by   $\alpha(I_{{i_1}{i_2}\cdots{i_{k}}}) = I_{{i_1^{'}}{i_2^{'}}\cdots{i_{n-k}^{'}}}$ such that $i_1^{'}, i_2^{'}, \ldots, i_{n-k}^{'}  \in [n] \setminus \{i_1, i_2, \ldots, i_k \}$. Then $\alpha \in Aut(\mathcal{I}n(S))$.
	\end{lemma}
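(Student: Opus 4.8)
The plan is to recognize $\alpha$ as the set-complement operation on the index sets of the left ideals, and to show it is an adjacency-preserving involution. Throughout I identify each vertex with a subset of $[n]$: by Lemma \ref{union minimal} every nontrivial left ideal of $S$ has the form $I_A := \bigcup_{i \in A} I_i$ for a nonempty proper subset $A \subseteq [n]$, and this $A$ is uniquely determined by $I_A$ because distinct minimal left ideals are disjoint (Remark \ref{disjoint intersection minimal}). In this notation $\alpha(I_A) = I_{A^c}$, where $A^c = [n] \setminus A$.

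First I would check that $\alpha$ is a well-defined self-map of $V(\mathcal{I}n(S))$. If $1 \leq |A| \leq n-1$, then also $1 \leq |A^c| \leq n-1$, so $I_{A^c}$ is again a genuine vertex; hence $\alpha$ maps $V(\mathcal{I}n(S))$ into itself. Bijectivity is immediate once one notes that $\alpha$ is an involution: since $(A^c)^c = A$, we have $\alpha(\alpha(I_A)) = I_A$, so $\alpha$ is its own inverse and therefore a permutation of $V(\mathcal{I}n(S))$.

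The remaining (and decisive) step is the adjacency condition. Using the disjointness of the minimal left ideals once more, $I_A \subseteq I_B$ holds \emph{if and only if} $A \subseteq B$; consequently $I_A \sim I_B$ in $\mathcal{I}n(S)$ precisely when $A \subsetneq B$ or $B \subsetneq A$. I would then invoke the elementary lattice fact that complementation reverses strict inclusion, namely $A \subsetneq B \iff B^c \subsetneq A^c$. Combining these, $I_A \sim I_B \iff I_{A^c} \sim I_{B^c}$, that is, $\alpha(I_A) \sim \alpha(I_B)$ if and only if $I_A \sim I_B$. Being a bijection that both preserves and reflects adjacency, $\alpha$ lies in $Aut(\mathcal{I}n(S))$.

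I do not expect a genuine obstacle here; the only points requiring care are bookkeeping, not difficulty. One must verify that $\alpha$ never leaves the vertex set (the cardinality bound $1 \leq |A^c| \leq n-1$) and that adjacency is governed purely by inclusion of index sets, which is exactly what the disjointness of minimal left ideals (Remark \ref{disjoint intersection minimal}) guarantees. The entire content then collapses to the observation that set-complement is an inclusion-reversing involution on the nonempty proper subsets of $[n]$.
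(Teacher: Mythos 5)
Your proof is correct and follows essentially the same route as the paper's: both arguments reduce adjacency to inclusion of the index sets (via disjointness of minimal left ideals) and then use the fact that complementation in $[n]$ reverses inclusion. Your observation that $\alpha$ is an involution gives a slightly cleaner justification of bijectivity than the paper's ``straightforward to verify,'' but the substance is identical.
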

	\begin{proof}
		It is straightforward to verify that $\alpha$ is one-one  and onto map on $V(\mathcal{I}n(S))$. Note that for any $I_{{j_1}{j_2}\cdots{j_t}}, I_{{j_1^{'}}{j_2^{'}}\cdots{j_s^{'}}} \in V(\mathcal{I}n(S))$. Suppose that  $I_{{j_1}{j_2}\cdots{j_t}} \sim I_{{j_1^{'}}{j_2^{'}}\cdots{j_{s}^{'}}}$. Without loss of generality, assume that $I_{{j_1}{j_2}\cdots{j_t}} \subset I_{{j_1^{'}}{j_2^{'}}\cdots{j_{s}^{'}}}$. Thus, $j_1, j_2, \ldots, j_t  \in \{j_1^{'}, j_2^{'}, \ldots, j_{s}^{'}\}$. If $\alpha(I_{{j_1}{j_2}\cdots{j_t}}) = I_{{l_1}{l_2} \cdots {l_{n-t}}}$, where $j_1, j_2, \ldots, j_t \in [n] \setminus \{l_1, l_2, \ldots, l_{n-t} \}$   
		and $\alpha(I_{{j_1^{'}}{j_2^{'}}\cdots{j_{s}^{'}}}) = I_{{l_1^{'}}{l_2^{'}} \cdots {l_{n-s}^{'}}}$, where $j_1^{'}, j_2{'}, \ldots, j_s^{'} \in [n] \setminus \{l_1^{'}, l_2^{'}, \ldots, {l_{n-s}^{'}} \}$. Since  $l_1, l_2, \ldots, l_{n-t} \in [n] \setminus \{j_1, j_2, \ldots, j_t\}$ and $l_1^{'}, l_2^{'}, \ldots, {l_{n-s}^{'}}  \in [n] \setminus \{j_1^{'}, j_2{'}, \ldots, j_s^{'}\}$. This implies that $\{l_1^{'}, l_2^{'}, \ldots, {l_{n-s}^{'}} \} \subset \{l_1, l_2, \ldots, l_{n-t} \}$. It follows that $I_{{l_1^{'}}{l_2^{'}} \cdots {l_{n-s}^{'}}} \subset I_{{l_1}{l_2} \cdots {l_{n-t}}}$. Consequently, $\alpha(I_{{j_1^{'}}{j_2^{'}}\cdots{j_{s}^{'}}}) \subset \alpha(I_{{j_1}{j_2}\cdots{j_t}})$. Thus, $ I_{{j_1}{j_2}\cdots{j_t}} \sim I_{{j_1^{'}}{j_2^{'}}\cdots{j_{s}^{'}}}$ implies that $\alpha(I_{{j_1}{j_2}\cdots{j_t}}) \sim \alpha(I_{{j_1^{'}}{j_2^{'}}\cdots{j_{s}^{'}}}).$ Now, if $\alpha(I_{{j_1}{j_2}\cdots{j_t}}) \sim \alpha(I_{{j_1^{'}}{j_2^{'}}\cdots{j_{s}^{'}}})$. Without loss of generality, assume that  $\alpha(I_{{j_1}{j_2}\cdots{j_t}}) \subset \alpha(I_{{j_1^{'}}{j_2^{'}}\cdots{j_{s}^{'}}})$. Similar to the argument discussed above we obtain if $\alpha(I_{{j_1}{j_2}\cdots{j_t}}) \subset \alpha(I_{{j_1^{'}}{j_2^{'}}\cdots{j_{s}^{'}}})$ then $I_{{j_1^{'}}{j_2^{'}}\cdots{j_{s}^{'}}} \subset I_{{j_1}{j_2}\cdots{j_t}}$.  
		Thus, $\alpha$ is an automorphism.
	\end{proof}
	\begin{remark}
		For $\phi_{\sigma}$ and $\alpha$, defined in Lemma \ref{symmetric group} and \ref{alpha}, we have  $\phi_{\sigma} \circ \alpha = \alpha \circ \phi_{\sigma}$.	
	\end{remark}
	\begin{proposition}\label{a and S}
		For each $f \in  Aut(\mathcal{I}n(S))$, we have either $f = \phi_{\sigma}$ or $f = \phi_{\sigma} \circ \alpha$ for some $\sigma \in S_n$.
	\end{proposition}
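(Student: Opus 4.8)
The plan is to first pin down the image of a single minimal left ideal under $f$ and then bootstrap to the whole graph. I would begin with a degree analysis: writing $d(k)=2^{k}+2^{n-k}-4$ for the common degree of any vertex of $T_k$ (Lemma \ref{degree}, Remark \ref{degree k}), the function $d$ is strictly convex and symmetric about $n/2$ on $\{1,\dots,n-1\}$, so the strict inequality $2^{k}+2^{n-k}<2+2^{n-1}$ for $2\le k\le n-2$ shows that the maximum degree is attained exactly on $T_1\cup T_{n-1}$. Since $f$ preserves degrees, $f(I_1)$ must lie in $T_1\cup T_{n-1}$, which splits the argument into two cases. (For $n=3$ every vertex already lies in $T_1\cup T_{n-1}$, so this step is vacuous but harmless.)

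The key reduction is to the cardinality-preserving situation, and here $\alpha$ together with Lemma \ref{automorphism 1 to n} does the work. If $f(I_1)\in T_{n-1}$, then Lemma \ref{automorphism 1 to n} gives $f(T_k)=T_{n-k}$ for every $k$; since $\alpha$ sends $T_k$ to $T_{n-k}$ (Lemma \ref{alpha}), the composite $g:=f\circ\alpha$ sends $T_k$ to $T_{n-k}$ and then to $T_{k}$, so $g(T_k)=T_k$. If instead $f(I_1)\in T_1$, then $\alpha\circ f$ sends $I_1$ into $T_{n-1}$, so Lemma \ref{automorphism 1 to n} applies to $\alpha\circ f$ and yields $(\alpha\circ f)(T_k)=T_{n-k}$; composing once more with $\alpha$ and using $\alpha^2=\mathrm{id}$ shows $f(T_k)=T_k$, so $f$ itself is cardinality-preserving. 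Thus in both cases we land on a cardinality-preserving automorphism $g$, namely $g=f\circ\alpha$ in the first case and $g=f$ in the second.

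It then remains to prove the central claim: every cardinality-preserving $g\in Aut(\mathcal{I}n(S))$ equals $\phi_\sigma$ for a suitable $\sigma\in S_n$. Since $g$ maps $T_1$ bijectively to $T_1$, it induces a permutation $\sigma$ of $[n]$ via $g(I_i)=I_{\sigma(i)}$, and by Lemma \ref{symmetric group} the map $h:=\phi_\sigma^{-1}\circ g$ is again a cardinality-preserving automorphism, now fixing every minimal left ideal $I_i$. I would show $h=\mathrm{id}$ by induction on $k$: given a vertex $A=I_{i_1\cdots i_k}$ with $k\ge 2$, the $k$ subideals obtained by deleting one of the indices $i_1,\dots,i_k$ have size $k-1$, are fixed by the inductive hypothesis, and are adjacent to $A$; as $h(A)\in T_k$ and a set in $T_k$ adjacent to a strictly smaller set must contain it, each of these subideals lies in $h(A)$, and their union is $A$ (every index is retained by all but one of them, and $k\ge 2$), forcing $A\subseteq h(A)$ and hence $A=h(A)$. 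Therefore $h=\mathrm{id}$ and $g=\phi_\sigma$. Undoing the reduction gives $f=\phi_\sigma$ in the second case and, from $f\circ\alpha=\phi_\sigma$ with $\alpha^2=\mathrm{id}$, $f=\phi_\sigma\circ\alpha$ in the first.

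The main obstacle is the interplay in the middle step: isolating $T_1\cup T_{n-1}$ as the set of maximum-degree vertices and then tracking precisely how $\alpha$ and Lemma \ref{automorphism 1 to n} compose to reduce an arbitrary $f$ to the cardinality-preserving case. Once that framework is set up, the inductive identification $h=\mathrm{id}$, which rests on the fact that a nontrivial left ideal is the union of its maximal proper subideals (so it is determined by the vertices just below it), is routine.
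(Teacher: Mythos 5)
Your proposal is correct, and its skeleton matches the paper's: both use the degree computation (Lemma \ref{degree}, Remark \ref{degree k}) to force $f(I_{i})$ into $T_1\cup T_{n-1}$, and both invoke Lemma \ref{automorphism 1 to n} to propagate the choice made on a single minimal left ideal to the whole vertex set. Where you diverge is in how the two resulting cases are finished. The paper treats them in parallel by direct element-chasing: in each case it determines $f(I_{i_1\cdots i_k})$ explicitly by recording which minimal left ideals are (and are not) contained in the image, using adjacency and non-adjacency with $I_{i_1},\dots,I_{i_n}$, and only then reads off $\sigma$. You instead normalize first: composing with $\alpha$ (and using $\alpha^2=\mathrm{id}$) collapses the $T_{n-1}$ case onto the cardinality-preserving case, and composing with $\phi_\sigma^{-1}$ reduces the remaining problem to showing that an automorphism fixing every $I_i$ and preserving each $T_k$ is the identity, which you get by induction on $k$ from the observation that a union of $k\ge 2$ minimal left ideals is recovered as the union of its $k$ maximal proper subideals, each of which must sit inside the image by a cardinality comparison. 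The normalization route is cleaner and isolates the one genuinely combinatorial fact (an ideal is determined by the vertices just below it), at the cost of having to verify that the compositions $f\circ\alpha$ and $\phi_\sigma^{-1}\circ g$ really do preserve each $T_k$; the paper's direct route avoids that bookkeeping but repeats essentially the same containment analysis twice. Both are complete proofs.
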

	\begin{proof}
	In view of Remark \ref{degree k} and Lemma \ref{automorphism 1 to n}, we prove the result through the following cases.
	
		\noindent\textbf{Case 1.} $f(I_{i_1}) = I_{j_1}$, $f(I_{i_2}) = I_{j_2}$, $\ldots$, $f(I_{i_n}) = I_{j_n}$. Consider $\sigma \in S_n$ such that $\sigma(i_1) = j_1, \sigma(i_2) = j_2, \ldots, \sigma(i_n) = j_n$. Then $\phi_{\sigma}(I_{{i_1}{i_2}\cdots{i_k}}) = I_{\sigma({i_1})\sigma({i_2})\cdots \sigma({i_k})} = I_{{j_1}{j_2}\cdots{j_k}}$ (cf. Lemma \ref{symmetric group}). Clearly, $I_{i_1} \sim I_{{i_1}{i_2}\cdots{i_k}}$, $I_{i_2} \sim I_{{i_1}{i_2}\cdots{i_k}}$, $\ldots$, $I_{i_k} \sim I_{{i_1}{i_2}\cdots{i_k}}$. Also note that for $i_{k+1}, i_{k+2}, \ldots, i_{n} \in [n] \setminus \{i_1, i_2, \ldots, i_k\}$, we have  $I_{i_{k+1}} \nsim I_{{i_1}{i_2}\cdots{i_k}}$, $I_{i_{k+2}} \nsim I_{{i_1}{i_2}\cdots{i_k}}$, $\ldots$, $I_{i_{n}} \nsim I_{{i_1}{i_2}\cdots{i_k}}$. Thus, $f(I_{i_1}) \sim f(I_{{i_1}{i_2}\cdots{i_k}})$, $f(I_{i_2}) \sim f(I_{{i_1}{i_2}\cdots{i_k}})$, $\ldots$, $f(I_{i_k}) \sim f(I_{{i_1}{i_2}\cdots{i_k}})$ and $f(I_{i_{k+1}}) \nsim f(I_{{i_1}{i_2}\cdots{i_k}})$, $f(I_{i_{k+2}}) \nsim f(I_{{i_1}{i_2}\cdots{i_k}})$, $\ldots$, $f(I_{i_{n}}) \nsim f(I_{{i_1}{i_2}\cdots{i_k}})$. Consequently, $I_{j_1} \subset f(I_{{i_1}{i_2}\cdots{i_k}})$, $I_{j_2} \subset f(I_{{i_1}{i_2}\cdots{i_k}})$, $\ldots$, $I_{j_k} \subset f(I_{{i_1}{i_2}\cdots{i_k}})$ and $I_{j_{k+1}} \not \subset f(I_{{i_1}{i_2}\cdots{i_k}})$, $I_{j_{k+2}} \not \subset f(I_{{i_1}{i_2}\cdots{i_k}})$, $\ldots$, $I_{j_n} \not \subset f(I_{{i_1}{i_2}\cdots{i_k}})$. It follows that $f(I_{{i_1}{i_2}\cdots{i_k}}) = I_{{j_1}{j_2}\cdots{j_k}} = \phi_{\sigma}(I_{{i_1}{i_2}\cdots{i_k}})$. Thus, $f = \phi_{\sigma}$.
		
		\noindent\textbf{Case 2.} $f(I_{i_1}) = I_{{j_1}{j_2}\cdots{j_{n-1}}}$, $f(I_{i_2}) = I_{{j_1^{'}}{j_2^{'}}\cdots{j_{n-1}^{'}}}$, $\ldots$, $f(I_{i_n}) = I_{{\ell_1}{\ell_2}\cdots{\ell_{n-1}}}$. Assume that  $I_{i_1^{'}} \not \subset f(I_{i_1})$, $I_{i_2^{'}}\not \subset f(I_{i_2})$, $\ldots$, $I_{i_n^{'}} \not \subset f(I_{i_n})$. Since $I_{i_1} \sim I_{{i_1}{i_2}\cdots{i_k}}$, $I_{i_2} \sim I_{{i_1}{i_2}\cdots{i_k}}$, $\ldots$, $I_{i_k} \sim I_{{i_1}{i_2}\cdots{i_k}}$, we obtain $f(I_{i_1}) \sim f(I_{{i_1}{i_2}\cdots{i_k}})$, $f(I_{i_2}) \sim f(I_{{i_1}{i_2}\cdots{i_k}})$, $\ldots$, $f(I_{i_k}) \sim f(I_{{i_1}{i_2}\cdots{i_k}})$. Consequently, $I_{i_1^{'}}\not \subset f(I_{{i_1}{i_2}\cdots{i_k}})$, $I_{i_2^{'}}\not \subset f(I_{{i_1}{i_2}\cdots{i_k}})$, $\ldots$, $I_{i_k^{'}}\not \subset f(I_{{i_1}{i_2}\cdots{i_k}})$. It follows that $I_{{i_1^{'}}{i_2^{'}}\cdots{i_k^{'}}} \not \subset f(I_{{i_1}{i_2}\cdots{i_k}})$. For $i_{k+1}, i_{k+2},\ldots, i_{n} \in [n] \setminus \{i_1, i_2, \ldots, i_k\}$, we have  $I_{i_{k+1}} \nsim I_{{i_1}{i_2}\cdots{i_k}}$, $I_{i_{k+2}} \nsim I_{{i_1}{i_2}\cdots{i_k}}$, $\ldots$, $I_{i_{n}} \nsim I_{{i_1}{i_2}\cdots{i_k}}$ so that $f(I_{i_{k+1}}) \nsim f(I_{{i_1}{i_2}\cdots{i_k}})$, $f(I_{i_{k+2}}) \nsim f(I_{{i_1}{i_2}\cdots{i_k}})$, $\ldots$, $f(I_{i_{n}}) \nsim f(I_{{i_1}{i_2}\cdots{i_k}})$. This implies that $I_{i_{k+1}^{'}} \subset f(I_{{i_1}{i_2}\cdots{i_k}})$, $I_{i_{k+2}^{'}} \subset f(I_{{i_1}{i_2}\cdots{i_k}})$, $\ldots$, $I_{i_n^{'}} \subset f(I_{{i_1}{i_2}\cdots{i_k}})$. As a result,  $I_{{i_{k+1}^{'}}{i_{k+2}^{'}}\cdots{i_{n}^{'}}}  \subset f(I_{{i_1}{i_2}\cdots{i_k}})$ and $I_{{i_1^{'}}{i_2^{'}}\cdots{i_k^{'}}} \not \subset f(I_{{i_1}{i_2}\cdots{i_k}})$.  Thus, $f(I_{{i_1}{i_2}\cdots{i_k}}) = I_{{i_{k+1}^{'}}{i_{k+2}^{'}}\cdots{i_{n}^{'}}}$. Define $\sigma(i_1) = i_1^{'}$, $\sigma(i_2) = i_2^{'}$, $\ldots$, $\sigma(i_n) = i_n^{'}$.
		Now, $(\phi_{\sigma} \circ \alpha)(I_{{i_1}{i_2}\cdots{i_k}}) = \phi_{\sigma}(I_{{i_{k+1}}{i_{k+2}}\cdots{i_{n}}}) = I_{{\sigma{(i_{k+1})}}{\sigma(i_{k+2})}\cdots{\sigma(i_{n})}} = I_{{i_{k+1}^{'}}{i_{k+2}^{'}}\cdots{i_{n}^{'}}} = f(I_{{i_1}{i_2}\cdots{i_k}})$. Hence $f = \phi_{\sigma} \circ \alpha$.
	\end{proof}
	\begin{theorem}\label{automorphism group}
		Let $S$ be a completely simple semigroup with $n$ minimal left ideals. Then for $n \geq 3$, we have $ Aut(\mathcal{I}n(S)) \cong S_n \times \mathbb{Z}_2$. Moreover, $|Aut(\mathcal{I}n(S))| = 2(n!)$.
	\end{theorem}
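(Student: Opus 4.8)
The plan is to exhibit $Aut(\mathcal{I}n(S))$ as an internal direct product of two explicit subgroups. Set
\[
H = \{\phi_{\sigma} : \sigma \in S_n\}, \qquad K = \{\mathrm{id}, \alpha\},
\]
where $\phi_{\sigma}$ and $\alpha$ are the automorphisms constructed in Lemmas \ref{symmetric group} and \ref{alpha}. First I would show $H \cong S_n$ and $K \cong \mathbb{Z}_2$, and then verify the three conditions of the internal direct product criterion: $H \cap K = \{\mathrm{id}\}$, elementwise commutativity of $H$ and $K$, and $HK = Aut(\mathcal{I}n(S))$.

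For $H \cong S_n$, consider the map $\Phi : S_n \to Aut(\mathcal{I}n(S))$ given by $\Phi(\sigma) = \phi_{\sigma}$. A direct computation on a vertex $I_{{i_1}{i_2}\cdots{i_k}}$ gives $\phi_{\sigma} \circ \phi_{\tau} = \phi_{\sigma \tau}$, so $\Phi$ is a group homomorphism with image $H$. Injectivity is immediate: if $\phi_{\sigma} = \phi_{\tau}$, then evaluating on each singleton vertex $I_{i}$ yields $I_{\sigma(i)} = I_{\tau(i)}$ for all $i \in [n]$, whence $\sigma = \tau$. Thus $H \cong S_n$. For $K$, I would observe that $\alpha$ sends a left ideal to its complementary union of minimal left ideals, so applying $\alpha$ twice returns the original vertex; hence $\alpha^2 = \mathrm{id}$, and since $\alpha \neq \mathrm{id}$ we obtain $K = \langle \alpha \rangle \cong \mathbb{Z}_2$.

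Now for the direct product structure. Elementwise commutativity $\phi_{\sigma} \circ \alpha = \alpha \circ \phi_{\sigma}$ is exactly the content of the remark preceding Proposition \ref{a and S}, and the equality $HK = Aut(\mathcal{I}n(S))$ is precisely Proposition \ref{a and S}, which states that every automorphism equals $\phi_{\sigma}$ or $\phi_{\sigma} \circ \alpha$ for some $\sigma \in S_n$. The one place where the hypothesis $n \geq 3$ is genuinely needed is the intersection $H \cap K = \{\mathrm{id}\}$: here I would argue that $\alpha \notin H$, because $\alpha$ maps the minimal left ideal $I_{i}$ to a union of $n-1$ minimal left ideals, which for $n \geq 3$ is not itself minimal, whereas every $\phi_{\sigma}$ carries minimal left ideals to minimal left ideals. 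Since elementwise commuting together with $HK = Aut(\mathcal{I}n(S))$ forces both $H$ and $K$ to be normal, the internal direct product criterion gives $Aut(\mathcal{I}n(S)) = H \times K \cong S_n \times \mathbb{Z}_2$, and the order count $|Aut(\mathcal{I}n(S))| = |S_n|\cdot|\mathbb{Z}_2| = 2(n!)$ follows at once.

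I would expect no serious obstacle, since the heavy lifting is already carried out in Lemma \ref{automorphism 1 to n} and Proposition \ref{a and S}; the remaining work is the bookkeeping of the internal direct product. The only subtle point is the intersection argument, where one must remember to invoke $n \geq 3$ in order to distinguish a single minimal left ideal from a union of $n-1$ of them.
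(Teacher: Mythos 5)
Your proposal is correct and follows essentially the same route as the paper: both rest on Lemmas \ref{symmetric group} and \ref{alpha}, the commutativity remark, and Proposition \ref{a and S}, and then identify $Aut(\mathcal{I}n(S))$ with $S_n \times \mathbb{Z}_2$ (the paper writes the isomorphism $\phi_{\sigma} \mapsto (\sigma,\bar{0})$, $\phi_{\sigma}\circ\alpha \mapsto (\sigma,\bar{1})$ directly, which is just the inverse of your internal-direct-product map $(h,k)\mapsto hk$). Your version usefully makes explicit the verifications the paper leaves implicit, namely that $\sigma \mapsto \phi_{\sigma}$ is an injective homomorphism, that $\alpha^{2}=\mathrm{id}$, and that $\alpha \notin H$ for $n \geq 3$, which is exactly where the distinctness claim $|Aut(\mathcal{I}n(S))| = 2(n!)$ comes from.
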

	
	\begin{proof}
			In view of Lemmas \ref{same union not adjacent}, \ref{automorphism 1 to n} and by Proposition \ref{a and S}, 
		note that the underlying set of the automorphism group of $\mathcal{I}n(S)$ is
		$Aut(\mathcal{I}n(S)) = \{\phi_{\sigma} \; : \; \sigma \in S_n \} \cup \{\phi_{\sigma} \circ \alpha \; : \; \sigma \in S_n \}$, where $S_n$ is a symmetric group of degree $n$. The groups $Aut(\mathcal{I}n(S))$ and $S_n \times \mathbb Z_2$ are isomorphic under the assignment $\phi_{\sigma} \mapsto (\sigma, \bar{0})$ and $\phi_{\sigma} \circ \alpha \mapsto (\sigma, \bar{1})$.  Since all the elements in $Aut(\mathcal{I}n(S))$ are distinct, we have $|Aut(\mathcal{I}n(S))| = 2(n!)$.
	\end{proof}
	%%%%%%%%%%%%%%%%%%%%%%%%%%%%%%%%%%%%%%%%%%%%%%%%
		\begin{theorem}\label{vertex transitive2}
		Let $S$ be a completely simple semigroup with $n$ minimal left ideals. Then the graph  $\mathcal{I}n(S)$ is vertex transitive  if and only if $n \in \{2, 3\}$.
	\end{theorem}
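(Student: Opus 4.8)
The plan is to exploit the standard fact that every vertex-transitive graph is regular, combine it with the explicit degree formula of Lemma \ref{degree} to eliminate the case $n \geq 4$, and then verify the two remaining small cases $n = 2, 3$ directly.

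First I would recall that if $\mathcal{I}n(S)$ is vertex transitive then it is regular: any graph automorphism preserves degrees, so an automorphism carrying $u$ to $v$ for every pair $u, v$ forces all vertices to share one common degree. Now by Lemma \ref{degree} a minimal left ideal $I_{i_1}$ has degree $(2^1 - 2) + (2^{n-1} - 2) = 2^{n-1} - 2$, while a union of two minimal left ideals $I_{{i_1}{i_2}}$ has degree $(2^2 - 2) + (2^{n-2} - 2) = 2^{n-2}$. Setting these equal yields $2^{n-1} - 2 = 2^{n-2}$, i.e. $2^{n-2} = 2$, which forces $n = 3$. Hence for every $n \geq 4$ these two degrees are distinct, since $2^{n-1} - 2 - 2^{n-2} = 2^{n-2} - 2 > 0$, so $\mathcal{I}n(S)$ is not regular and therefore not vertex transitive. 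This gives the forward implication: vertex transitivity forces $n \leq 3$, that is, $n \in \{2, 3\}$.

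For the converse I would identify the graph explicitly in each of the two remaining cases. When $n = 2$, Theorem \ref{two minimal} and Corollary \ref{null graph} show that $\mathcal{I}n(S)$ is an edgeless graph on $2^2 - 2 = 2$ vertices; the transposition of its two vertices is an automorphism, so it is vertex transitive. When $n = 3$, Lemma \ref{union minimal} gives $\mathcal{I}n(S) \cong C_6$, and cycles are vertex transitive. Alternatively, one may read off vertex transitivity for $n = 3$ from Theorem \ref{automorphism group}: the automorphism $\alpha$ of Lemma \ref{alpha} carries each minimal left ideal to a union of two minimal left ideals, while the maps $\phi_{\sigma}$ realize the full symmetric group within each of the two sizes, so the group generated by $\{\phi_{\sigma}, \alpha\}$ acts transitively on all six vertices. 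Either description completes this direction.

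Since the key computation is elementary, I expect no serious obstacle; the only point demanding a little care is the converse, where one must confirm that the two degenerate graphs are genuinely vertex transitive and not merely regular. Both the edgeless graph on two vertices and the cycle $C_6$ are standard vertex-transitive graphs, so this verification is immediate.
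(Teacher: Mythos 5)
Your proposal is correct and follows essentially the same route as the paper: the forward direction rules out $n \geq 4$ by non-regularity via the degree formula of Lemma \ref{degree} (the paper cites Remark \ref{degree k} where you compute the two degrees explicitly, which is a harmless strengthening), and the converse identifies the graphs for $n=2,3$ and exhibits transitivity, with your $C_6$ observation matching the paper's use of $\phi_{\sigma}$ and $\phi_{\sigma}\circ\alpha$ from Theorem \ref{automorphism group}.
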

	\begin{proof}
Suppose that $n \geq 4$. Then by Remark \ref{degree k}, there exist at least two vertices whose degree is not equal. Thus, $\mathcal{I}n(S)$ is not a regular graph and so  is not a vertex transitive graph. Conversely, suppose that $n \in \{2, 3\}$. If $n =2$, then we have $V(\mathcal{I}n(S)) = \{I_1, I_2\}$. Then by Lemma \ref{symmetric group}, $\mathcal{I}n(S)$ is vertex transitive. If $n =3$, then we have $V(\mathcal{I}n(S)) = \{I_1, I_2, I_3, I_{12}, I_{13}, I_{23}\}$. Let $J$ and $J'$ be two nontrivial left ideals of $S$. If both $J$ and $J'$ are minimal (or non minimal), then by Theorem \ref{automorphism group}, there exist a graph automorphism $\phi_{\sigma}$ such that $\phi_{\sigma}(J) = J'$. Now suppose that one of them is minimal. Without loss of generality, assume that $J$ is minimal and $J'$ is not a minimal left ideal of $S$. Then again  by Theorem \ref{automorphism group}, there exist a graph automorphism  $\phi_{\sigma} \circ \alpha$ for some $\sigma \in S_n$ such that $(\phi_{\sigma} \circ \alpha)(J) = J'$. Thus, $\mathcal{I}n(S)$ is vertex transitive.
	\end{proof}
	
	Since every connected vertex transitive graph is a retract of Cayley graph (cf. \cite{godsil2013algebraic}), by Theorem \ref{two minimal} and \ref{vertex transitive2}, we have the following corollary.

\begin{corollary}
Let $S$ be a completely simple semigroup with $3$ minimal left ideals. Then the graph  $\mathcal{I}n(S)$ is a retract of Cayley graph.
\end{corollary}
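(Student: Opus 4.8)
The plan is to deduce the corollary directly from the cited structural theorem by verifying that, for a completely simple semigroup with exactly $3$ minimal left ideals, the graph $\mathcal{I}n(S)$ meets both hypotheses of that theorem, namely that it is connected and vertex transitive. The cited result from \cite{godsil2013algebraic} asserts that every connected vertex transitive graph is a retract of a Cayley graph, so once both properties are confirmed the conclusion is immediate.

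First I would settle connectedness. Since $S$ is completely simple with $3$ minimal left ideals, it is certainly not the union of \emph{exactly two} minimal left ideals, and hence by Theorem \ref{two minimal} the graph $\mathcal{I}n(S)$ is connected. Alternatively, and more explicitly, Lemma \ref{union minimal} gives $\mathcal{I}n(S) \cong C_6$ for $n = 3$, and a $6$-cycle is manifestly connected; I would likely cite the latter description since it also makes the next step transparent. Second, vertex transitivity follows at once from Theorem \ref{vertex transitive2}: because $3 \in \{2, 3\}$, that theorem yields that $\mathcal{I}n(S)$ is vertex transitive. With both hypotheses in hand, applying the theorem of \cite{godsil2013algebraic} completes the argument.

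There is no genuine computational obstacle here; the content is entirely in combining the two earlier characterizations. The one point worth emphasizing is that $n = 3$ is precisely the value at which both properties coexist: connectedness requires $n \geq 3$ (it fails at $n = 2$ by Theorem \ref{two minimal} and Corollary \ref{null graph}), while vertex transitivity requires $n \leq 3$ (it fails for $n \geq 4$ since, by Lemma \ref{degree} and Remark \ref{degree k}, $\mathcal{I}n(S)$ is no longer regular and hence cannot be vertex transitive). Thus the corollary is exactly the overlap of the connected range and the vertex transitive range, and the proof amounts to invoking Theorem \ref{two minimal}, Theorem \ref{vertex transitive2}, and the retract theorem in turn.
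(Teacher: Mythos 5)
Your proposal is correct and matches the paper's argument exactly: the paper derives this corollary by combining Theorem \ref{two minimal} (connectedness for $n=3$) with Theorem \ref{vertex transitive2} (vertex transitivity for $n=3$) and then invoking the fact from \cite{godsil2013algebraic} that every connected vertex transitive graph is a retract of a Cayley graph. Your additional observation that $n=3$ is precisely the overlap of the connected and vertex transitive ranges is a nice remark but not needed for the proof.
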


\section{Acknowledgement}
We are thankful to the referees for  valuable suggestions which helped in improving the presentation of the paper.  The first author gratefully acknowledge for providing financial support to CSIR  (09/719(0093)/2019-EMR-I) government of India. The second author wishes to acknowledge the support of MATRICS Grant  (MTR/2018/000779) funded by SERB, India.
	%%%%%%%%%%%%%%%%%%%%%%%%%%%%%%%%%%%%%%%%%%%%%%%%
		
%%%%%%%%%%%%%%%%%%%%%%%%%%%%%%%%%%%%%%%%%%%%%%%%%%%%%%%%%%%%%%%%%%%%%%%%%%%%%%%%%%%%%%%%%%%%%%%%%%

\end{document}